\newtheorem{theorem}{Theorem}[section]
\newtheorem{proposition}{Proposition}[section]
\newtheorem{lemma}[proposition]{Lemma}
\theoremstyle{definition}
\newtheorem{definition}{Definition}[section]
\newtheorem{remark}{Remark}[section]
\theoremstyle{plain}
\DeclareMathAlphabet{\mathcalligra}{T1}{calligra}{m}{n}
\DeclareFontShape{T1}{calligra}{m}{n}{<->s*[2.2]callig15}{}
\newcommand{\leftexp}[2]{{\vphantom{#2}}^{#1}{#2}}
\numberwithin{equation}{subsection}
\newcommand{\weight}{\mathscr{W}}
\newcommand{\Tboot}{T_{(Boot)}}
\newcommand{\Ifact}{\mathcal{I}}
\begin{document}
\title{Stable ODE-type blowup for some quasilinear wave equations 
	with derivative-quadratic nonlinearities
}
\author[JS]{Jared Speck$^{* \dagger}$}

\thanks{$^{\dagger}$JS gratefully acknowledges support from NSF grant \# DMS-1162211,
from NSF CAREER grant \# DMS-1454419,
from a Sloan Research Fellowship provided by the Alfred P. Sloan foundation,
and from a Solomon Buchsbaum grant administered by the Massachusetts Institute of Technology.
}

\thanks{$^{*}$Massachusetts Institute of Technology, Cambridge, MA, USA.
\texttt{jspeck@math.mit.edu}}

\begin{abstract}
We prove a constructive stable ODE-type blowup result for
open sets of solutions to a family of quasilinear wave equations
in three spatial dimensions
featuring a Riccati-type derivative-quadratic semilinear term.
The singularity is more severe than a shock
in that the solution itself blows up like the log
of the distance to the blowup-time. We assume that the quasilinear terms 
satisfy certain structural assumptions, which in particular ensure 
that the ``elliptic part'' of the wave operator vanishes precisely at the singular points.
The initial data are compactly supported and 
can be small or large in $L^{\infty}$, but
the spatial derivatives must initially satisfy a nonlinear
smallness condition compared to the time derivative.
The first main idea of the proof is to construct a quasilinear integrating factor, 
which allows us to reformulate the wave equation as a 
system whose solutions remain regular, 
all the way up to the singularity. This is equivalent to constructing 
quasilinear vectorfields adapted to the nonlinear flow. 
The second main idea is to exploit some 
crucial monotonic terms in various estimates, especially the energy estimates, 
that feature the integrating factor.
The availability of the monotonicity is tied to our assumptions on the data 
and on the structure of the quasilinear terms.
The third main idea is to propagate the relative smallness of 
the spatial derivatives all the way up to the singularity 
so that the solution behaves, in many ways, like
an ODE solution. As a corollary of our main results, we show that there are quasilinear wave equations
that exhibit two distinct kinds of blowup: the formation of shocks for one non-trivial set of data,
and ODE-type blowup for another non-trivial set.
\bigskip

\noindent \textbf{Keywords}: blowup, ODE blowup, shocks, singularities, stable blowup

\bigskip

\noindent \textbf{Mathematics Subject Classification (2010)} Primary: 35L67; Secondary: 35L05, 35L45, 35L52, 35L72
\end{abstract}

\maketitle

\centerline{\today}

\tableofcontents
\setcounter{tocdepth}{1}

\newpage

\section{Introduction}
\label{S:INTRO}
A fundamental issue surrounding the study of nonlinear hyperbolic PDEs
is that singularities can form in finite time, starting from smooth initial data.
For a given singularity-forming solution, perhaps the most basic question one can ask is 
whether or not it is stable under perturbations
of its initial data. Our main result provides an affirmative answer to this question
for some solutions to a class of quasilinear wave equations. Specifically, in three spatial dimensions, 
we provide a sharp, constructive proof of stable ODE-type blowup for solutions corresponding to an \underline{open} 
set (in a suitable Sobolev topology in which there are no radial weights in the norms)
of initial data for a class of quasilinear wave equations 
that are well-modeled by 
\begin{align} \label{E:FIRSTMODELWAVE}
-\partial_t^2 \Phi + \frac{1}{1 + \partial_t \Phi} \Delta \Phi = - (\partial_t \Phi)^2.
\end{align}
For the solutions from our main results, $\Phi$ itself blows up.
This is a much more drastic singularity compared to the case of the formation of shocks, 
which for equations of type \eqref{E:FIRSTMODELWAVE} would correspond to the blowup
of $\partial^2 \Phi$ with $\Phi$ and $\partial \Phi$ remaining bounded;
see Subsect.\ \ref{SS:PRIORBREAKDOWNRESULTS} for further discussion.
As a corollary of our main results, we show 
(see Subsect.\ \ref{SS:DIFFERENTKINDSOFSINGULARITYFORMATION})
that there are quasilinear wave equations
that exhibit \emph{two distinct kinds of blowup}: 
ODE-type blowup for one non-trivial (but not necessarily open) set of initial data,
and the formation of a shock for a different non-trivial set of data.
We view this as a parable highlighting two key phenomena 
that would have to be accounted for in any sufficiently broad theory of
singularity formation in solutions to quasilinear wave equations; 
i.e., in principle, a quasilinear\footnote{For certain \emph{semilinear} wave equations,
it is well-known that
different kinds of blowup can occur: blowup of the $L^{\infty}$ norm of the solution itself (known as Type I blowup)
and a different kind of blowup in which the solution remains bounded in an appropriate Sobolev norm
(known as Type II blowup); see Subsect.\ \ref{SS:PRIORBREAKDOWNRESULTS}.} 
wave equation can admit radically 
different types of singularity-forming solutions.\footnote{These phenomena can also be exhibited in the
much simpler setting of quasilinear transport equations. For example, the inhomogeneous Burgers equation 
$\partial_t \Psi + \Psi \partial_x \Psi = \Psi^2$ admits the $T$-parameterized family of 
spatially homogeneous singularity-forming solutions $\Psi_{(ODE);T} := (T-t)^{-1}$ as well as solutions that form shocks,
i.e., $|\partial_x \Psi|$ blows up but $|\Psi|$ remains bounded.}
It is only for concreteness that we restrict our attention to three spatial dimensions;
our approach can be applied to any number of spatial dimensions, with only slight modifications needed.
See Subsects.\ \ref{SS:STATEMENTOFWAVEEQUATION} and \ref{SS:WEIGHTASSUMPTIONS} 
for a precise description of the class of equations that we treat,
Subsubsect.\ \ref{SSS:SUMMARYOFRESULTS} for a summary of our main results,
and Sect.\ \ref{S:MAINTHM} for the detailed statement of our main theorem.

Obtaining a sharp description of the blowup is
particularly important if one aims to weakly continue the solution
past the singularity, as is sometimes possible; one expects to need sharp information
in order to even properly set up the problem of weakly continuing.\footnote{The most significant weak continuation result in more than one spatial dimension
is Christodoulou's recent solution \cite{dC2017} of the restricted 
shock development
problem in compressible fluid mechanics, which, roughly speaking,
is a local well-posedness result for weak solutions and their corresponding 
hypersurfaces of discontinuity, starting from the first shock, whose formation
from smooth initial conditions was described in detail in his breakthrough work 
for relativistic fluids \cite{dC2007} and in his follow-up work with Miao \cite{dCsM2014}
on non-relativistic compressible fluids.
The term ``restricted'' means that the jump in entropy across the shock hypersurface was ignored.}
Our work shows that a standard type of weak continuation 
is not possible for the solutions that we study,
since $\Phi$ itself blows up.

The precise algebraic details of the weight
$
\frac{1}{1 + \partial_t \Phi}
$
in front of the Laplacian term in equation \eqref{E:FIRSTMODELWAVE} 
are not important for our proof.
What is important is that the weight decays at an appropriate rate
as $\partial_t \Phi \to \infty$, that is, as the singularity forms;
see Subsect.\ \ref{SS:WEIGHTASSUMPTIONS} for our assumptions on the weight.
As we will explain, this decay yields
a friction-type spacetime integral that is important for closing the energy estimates,
and it also helps us to prove that spatial derivative terms remain small relative to the time derivative terms, 
up to the singularity. The problem of providing a sharp description of blowup for solutions to
derivative-quadratic semilinear wave equations, such as
$-\partial_t^2 \Phi + \Delta \Phi = - (\partial_t \Phi)^2$,
remains open, even though John \cite{fJ1981} showed, via proof by contradiction,
that all non-trivial, smooth, compactly supported solutions to the equation 
$-\partial_t^2 \Phi + \Delta \Phi = - (\partial_t \Phi)^2$ 
in three spatial dimensions blow up in finite time.

Our results show, in part due to the weight in front of the Laplacian, 
that the spatial-derivative-involving nonlinearities in equation \eqref{E:FIRSTMODELWAVE} (and the other equations that we study)
exhibit a subcritical\footnote{In contrast,
for the semilinear equation $-\partial_t^2 \Phi + \Delta \Phi = - (\partial_t \Phi)^2$, 
our approach suggests, but does not prove, that the blowup-rate for the 
Laplacian term $\Delta \Phi$ might be critical with respect to the expected blowup-rate
for the other two terms in the equation, i.e., that all terms might blow up at the same rate.} 
blowup-rate relative to the pure time derivative terms.
However, as we explain below, this 
\emph{subcritical behavior does not seem detectable relative to the standard partial derivatives $\partial_{\alpha}$};
to detect the behavior, we will use a combination of ``quasilinear vectorfield derivatives'' $\mathcal{I} \partial_{\alpha}$ 
and standard derivatives $\partial_{\alpha}$,
where $\Ifact$ is a ``quasilinear integrating factor'' that we describe below. In fact, we will show that
$\Ifact \partial_{\alpha} \Phi$ remains bounded up to the singularity and that the singularity formation 
coincides with the vanishing of $\Ifact$.
In total, our approach allows us to treat the equations under study 
as quasilinear perturbations of the Riccati ODE
$\frac{d^2}{dt^2} \Phi = (\frac{d}{dt} \Phi)^2$. 
By ``perturbation of the Riccati ODE'', 
we mean in particular that the singularity formation 
is similar to the one that occurs in the following $T$-parameterized family of ODE solutions to \eqref{E:FIRSTMODELWAVE}:
\begin{align} \label{E:ODEBLOWWINGUPSOLUTION}
	\Phi_{(ODE);T}(t) := \ln \left((T-t)^{-1}\right),
\end{align}
where $T \in \mathbb{R}$ is the blowup-time.
Our methods are tailored to the quadratic term on RHS~\eqref{E:FIRSTMODELWAVE}
in that they do not apply, at least in their current form, to 
semilinear terms of type $(\partial_t \Phi)^p$ for $p \neq 2$. 
However, derivative-quadratic terms are of particular interest in view of the fact
that they commonly arise in nonlinear field theories
(though the derivative-quadratic terms in such theories 
are often not Riccati-type, like the one featured on RHS~\eqref{E:FIRSTMODELWAVE}).

There are many results on stable breakdown for wave equations, some of which 
we review in Subsect.\ \ref{SS:PRIORBREAKDOWNRESULTS}.
The ``theory'' of stable breakdown is quite fragmented 
in that the techniques that have been employed
vary wildly between different classes of equations. 
In particular, the techniques that have been developed
do not seem to apply to the equations under study here.
This will become clear after we describe the main ideas
of our proof (see Subsect.\ \ref{SS:IDEASBEHINDPROOF}) 
and review prior works on stable
breakdown. Although ODE-type blowup is arguably the simplest blowup scenario, 
there do not seem to be any prior constructive stable blowup results 
of this type for scalar wave equations with derivative-quadratic nonlinearities, 
in any number of spatial dimensions. 
We mention, however, that in \cite{iRjS2014b}, 
we proved, using rather different techniques specialized to Einstein's equations,
a singularity formation result for Einstein's equations
that can be interpreted as a stable ODE-type blowup result
for the first derivatives\footnote{Relative to a geometrically defined coordinate system, 
the second fundamental form of the metric blows up, though the metric
components do not; this can be viewed as the blowup of the first derivatives of the metric.} 
of a solution
to a quasilinear system with derivative-quadratic nonlinearities.

\subsection{Paper outline}
\label{SS:PAPEROUTLINE}
\begin{itemize}
	\item In the remainder of Sect.\ \ref{S:INTRO},
		we summarize our results, outline their proofs, 
		place our work in context by discussing prior works,
		discuss (see Subsect.\ \ref{SS:DIFFERENTKINDSOFSINGULARITYFORMATION}) 
		a corollary (which we described just below equation \eqref{E:FIRSTMODELWAVE}) of our main results,
		and summarize our notation.
	\item In Sect.\ \ref{S:MATHEMATICALSETUP},
		we define the quantities play a role in our analysis
		and derive various evolution equations.
	\item In Sect.\ \ref{S:DATAANDBOOTSTRAP}, we state our assumptions on the
		initial data and state bootstrap assumptions that are useful for studying the solution.
	\item In Sect.\ \ref{S:ENERGY}, we derive energy identities.
	\item In Sect.\ \ref{S:ESTIMATES}, which is the main section of the article,
		we derive a priori estimates that in particular yield strict improvements
		of the bootstrap assumptions.
	\item In Sect.\ \ref{S:WELLPOSEDNESS}, we state a standard local well-posedness result
		and continuation criteria for the equations under study.
	\item In Sect.\ \ref{S:MAINTHM}, we prove the main theorem.
\end{itemize}

\subsection{The class of wave equations under study}
\label{SS:STATEMENTOFWAVEEQUATION}
Our main theorem concerns solutions to the Cauchy problem for
quasilinear wave equations in three spatial dimensions 
of the following form:
\begin{subequations}
\begin{align} \label{E:WAVE}
	- 
	\partial_t^2 \Phi
	+
	\weight(\partial_t \Phi)
	\Delta \Phi
	& = - (\partial_t \Phi)^2,
		\\
	(\partial_t \Phi|_{\Sigma_0},\partial_1 \Phi|_{\Sigma_0}, \partial_2 \Phi|_{\Sigma_0}, \partial_3 \Phi|_{\Sigma_0}) 
	& = (\mathring{\Psi}_0,\mathring{\Psi}_1,\mathring{\Psi}_2,\mathring{\Psi}_3),
	\label{E:DATAWAVE}
\end{align}
\end{subequations}
where throughout, $\Sigma_t$ denotes the hypersurface of constant time $t$.
Our use of the notation ``$\mathring{\Psi}_{\alpha}$'' for the data functions 
is tied to our use of the renormalized solutions variables $\Psi_{\alpha}$ that
we will use in studying solutions; see Def.~\ref{D:RENORMALIZEDSOLUTION}.

\begin{remark}[\textbf{Viewing} \eqref{E:WAVE} \textbf{as an equation in} $\partial \Phi$]
\label{R:NOPHIINWAVEEQUATION}
Since $\Phi$ itself is not featured in equation \eqref{E:WAVE}
(only its derivatives appear), 
we only need to prescribe the derivatives of $\Phi$ along $\Sigma_0$
in order to solve for $\lbrace \partial_{\alpha} \Phi \rbrace_{\alpha = 0,1,2,3}$.
This is relevant in that we do not bother to derive estimates for
$\Phi$ itself (see, however, Remark~\ref{R:BLOWUPOFPHI}).
\end{remark}

In \eqref{E:WAVE}, 
$\Delta := \sum_{a=1}^3 \partial_a^2$ is the standard Euclidean Laplacian on $\mathbb{R}^3$
and $\weight = \weight(\partial_t \Phi)$ is a nonlinear ``weight function''
verifying certain technical conditions stated below,
specifically \eqref{E:WEIGHTISPOSITIVE}-\eqref{E:WEIGHTVSWEIGHTDERIVATIVECOMPARISON}.
Prototypical examples of
weights verifying \eqref{E:WEIGHTISPOSITIVE}-\eqref{E:WEIGHTVSWEIGHTDERIVATIVECOMPARISON}
are the functions
\begin{align} \label{E:POWERLAWWEIGHT}
\weight(y)
& = \frac{1}{1 + y^M}
&
\mbox{or }
\weight(y)
& = \frac{1}{(1 + y)^M},
\end{align}
where $M \geq 1$ is an integer,
and the function
\begin{align} \label{E:EXPWEIGHT}
\weight(y)
& = \exp(-y).
\end{align}

\subsection{Rough summary of the results and discussion of the proof}
\label{SS:IDEASBEHINDPROOF}

\subsubsection{Rough summary of the results}
\label{SSS:SUMMARYOFRESULTS}
We now briefly summarize the main results;
see Theorem~\ref{T:STABILITYOFODEBLOWUP} for precise statements.

\begin{theorem}[\textbf{Stable ODE-type blowup} (rough version)]
\label{T:ROUGHMAINTHM}
Under suitable assumptions (stated in Subsect.\ \ref{SS:WEIGHTASSUMPTIONS}) 
on the weight $\weight(\partial_t \Phi)$, 
there exists an open set of compactly supported initial data for equation \eqref{E:WAVE},
with $\mathring{\Psi}_{\alpha} \in H^5(\mathbb{R}^3)$,
such that the solution blows up in finite time in a manner similar
to the solutions $\Phi_{(ODE);T}$ from \eqref{E:ODEBLOWWINGUPSOLUTION}. 
In particular, there exists a time $0 < T_{(Lifespan)} < \infty$
such that\footnote{More precisely, one can conclude that $\| \Phi\|_{L^{\infty}(\Sigma_t)}$ blows up at
$t = T_{(Lifespan)}$ if initial data for $\Phi$ itself is prescribed; Remark~\ref{R:NOPHIINWAVEEQUATION}.}
$
\| \partial_t \Phi \|_{L^{\infty}(\Sigma_t)}
$
and 
$\| \Phi\|_{L^{\infty}(\Sigma_t)}$ blow up as 
$t \uparrow T_{(Lifespan)}$.
The data functions $\lbrace \mathring{\Psi}_{\alpha} \rbrace_{\alpha=0,1,2,3}$ 
are allowed to be large or small as measured 
by a Sobolev norm without radial weights, but 
$\lbrace \mathring{\Psi}_a \rbrace_{a=1,2,3}$, 
$\nabla \mathring{\Psi}_0$,
and their spatial derivatives up to top order must satisfy 
a nonlinear smallness condition
relative to $\max_{\Sigma_0} [\mathring{\Psi_0}]_+$.

Moreover, let the integrating factor $\Ifact$ be the solution to
\begin{align} \label{E:INTROINTEGRATINFACTORODEANDIC}
	\partial_t \Ifact
	& = 
	- \Ifact \partial_t \Phi,
	&& \Ifact|_{\Sigma_0} = 1.
\end{align}
Then $\Ifact$, the variables 
\begin{align} \label{E:RENORMALIZEDRICCATIBLOWUPVARIABLES}
	\Psi_{\alpha} := \Ifact \partial_{\alpha} \Phi,
\end{align}
and their partial derivatives with respect to the Cartesian coordinates
\textbf{remain regular all the way up to time $T_{(Lifespan)}$},
except possibly at the top derivative level due to 
the vanishing of the weight $\weight(\partial_t \Phi)$
(which appears in the energy estimates)
as $\partial_t \Phi \uparrow \infty$.
\end{theorem}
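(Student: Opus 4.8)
The plan is to trade the second-order wave equation \eqref{E:WAVE} for a first-order-in-time system for the renormalized unknowns $\Ifact$ and $\Psi_{\alpha} = \Ifact \partial_{\alpha} \Phi$ of \eqref{E:RENORMALIZEDRICCATIBLOWUPVARIABLES}, run a continuity/bootstrap argument showing that these unknowns and their lower-order Cartesian derivatives remain regular for as long as $\Ifact > 0$, and then show that $\Ifact$ necessarily vanishes in finite time, which forces the blowup of $\partial_t \Phi$ and of $\Phi$. First, combining \eqref{E:INTROINTEGRATINFACTORODEANDIC} with $\partial_t \Phi = \Psi_0 / \Ifact$ gives the clean transport equation
\begin{align*}
\partial_t \Ifact & = - \Psi_0, \qquad \Ifact|_{\Sigma_0} = 1,
\end{align*}
so $\Ifact(t,x) = 1 - \int_0^t \Psi_0(s,x)\,ds$ and $\partial_a \Ifact(t,x) = - \int_0^t \partial_a \Psi_0(s,x)\,ds$. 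Differentiating $\Psi_0 = \Ifact \partial_t \Phi$ in $t$ and substituting \eqref{E:WAVE} produces the exact cancellation of the Riccati term:
\begin{align*}
\partial_t \Psi_0 & = \Ifact \, \weight(\Psi_0 / \Ifact) \, \Delta \Phi, \qquad \Delta \Phi = \sum_{a=1}^{3} \partial_a \!\left( \frac{\Psi_a}{\Ifact} \right),
\end{align*}
and a similar spatial-derivative-involving evolution equation for each $\Psi_a$, again carrying an overall favorable factor. The structural point, to be quantified with the assumptions on $\weight$ from Subsect.\ \ref{SS:WEIGHTASSUMPTIONS}, is that $\Ifact \weight(\Psi_0/\Ifact)$ (and the analogous factors in the $\Psi_a$-equations) vanish fast enough as $\Ifact \downarrow 0$ to absorb the powers of $\Ifact^{-1}$ generated when Cartesian spatial derivatives of $\Phi$ are rewritten in terms of $\Psi_a$; equivalently, $\partial_a \Phi = \Psi_a/\Ifact$ does not produce a spatial shock.

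\textbf{Bootstrap and pointwise improvement.} On a bootstrap interval $[0,\Tboot)$ I would posit: $\Ifact$ stays comparable to the affine profile $1 - t\,\max_{\Sigma_0}[\mathring{\Psi}_0]_+$ (in particular positive); $\Psi_0$ stays in a small $L^{\infty}(\Sigma_t)$-ball of $\mathring{\Psi}_0$; $\Psi_a$, $\nabla \Psi_0$ and their Cartesian derivatives stay \emph{small}; and the high-order energies of $\{\Psi_{\alpha}\}$ (up to the order afforded by the $H^5$ data) stay bounded. To improve the pointwise assumptions, integrate the evolution equations in $t$: since $\Ifact \weight(\Psi_0/\Ifact)$ carries enough powers of $\Ifact$ (e.g.\ $\lesssim \Ifact^{M+1}$ for the weights \eqref{E:POWERLAWWEIGHT}) while the worst factor such as $\Delta \Phi$ is controlled, via Sobolev embedding from the energies, by $\lesssim \Ifact^{-2}$ times a small quantity, the integrand is $\lesssim \Ifact^{M-1}$ times a small quantity, whose time integral stays bounded as $\Ifact \downarrow 0$ (for $M \geq 1$, since $\Ifact \sim T_{(Lifespan)} - t$ there); hence $\Psi_0$, $\Psi_a$ and their derivatives change by only a controllably small amount, giving the improved bounds and propagating the relative smallness of the spatial derivatives all the way to $T_{(Lifespan)}$.

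\textbf{Energy estimates and continuation.} Using the energy identities of Sect.\ \ref{S:ENERGY} with a multiplier incorporating the integrating factor $\Ifact$, the key is that $\Ifact$ generates monotone, friction-type spacetime integrals whose favorable signs are exactly what the hypotheses on the data and on $\weight$ are arranged to furnish; these absorb the error terms produced by commuting Cartesian derivatives through the quasilinear structure. Because the coercive ``elliptic'' contribution $\int \weight(\Psi_0/\Ifact)\,|\nabla(\cdots)|^2$ degenerates as $\weight \downarrow 0$, one can only conclude that the below-top-order energies stay bounded up to $T_{(Lifespan)}$, while the top-order energies are permitted to grow (though they remain finite on each compact subinterval of $[0,T_{(Lifespan)}))$. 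Together with the pointwise improvements, a continuity argument then extends the bootstrap interval up to $T_{(Lifespan)} :=$ the first time $\Ifact$ vanishes on $\Sigma_t$, and the continuation criteria of Sect.\ \ref{S:WELLPOSEDNESS} certify that the solution exists classically, with $\partial \Phi$ regular, on $[0,T_{(Lifespan)})$.

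\textbf{Blowup, openness, and the main obstacle.} Since $\Psi_0$ remains $L^{\infty}$-close to $\mathring{\Psi}_0$, near a point where $\mathring{\Psi}_0$ attains its positive maximum we keep $\Psi_0 \geq c_0 > 0$, so $\Ifact = 1 - \int_0^t \Psi_0\,ds$ reaches $0$ at some finite $T_{(Lifespan)} \in (0,\infty)$ with $T_{(Lifespan)} \approx (\max_{\Sigma_0}[\mathring{\Psi}_0]_+)^{-1}$; there $\partial_t \Phi = \Psi_0/\Ifact \uparrow +\infty$, so $\| \partial_t \Phi \|_{L^{\infty}(\Sigma_t)} \to \infty$, and, because $\Ifact$ vanishes to first order in $(T_{(Lifespan)} - t)$, integrating $\partial_t \Phi = \Psi_0/\Ifact$ in $t$ gives $\Phi(t,x) \sim \ln\big((T_{(Lifespan)} - t)^{-1}\big)$, matching \eqref{E:ODEBLOWWINGUPSOLUTION}, whence $\| \Phi \|_{L^{\infty}(\Sigma_t)} \to \infty$ as well. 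The data hypotheses --- $H^5$ membership, $\max_{\Sigma_0}[\mathring{\Psi}_0]_+ > 0$, and the nonlinear smallness of $\{\mathring{\Psi}_a\}$, $\nabla \mathring{\Psi}_0$ and their derivatives --- are all open in the stated norm, so the set of such data is open. I expect the main obstacle to be the energy estimates of Step~3: controlling the top-order energies despite the degeneration of the weight $\weight(\Psi_0/\Ifact) \downarrow 0$, which destroys the uniform coercivity of the elliptic term, by showing that the monotone spacetime integrals coming from the integrating factor dominate all error terms; and, in tandem, getting the powers of $\Ifact$ to close at every derivative level simultaneously so that each spatial-derivative-carrying term on the right-hand sides has a convergent time integral while the smallness of the spatial derivatives remains self-consistent.
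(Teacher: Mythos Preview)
Your proposal is correct and follows essentially the same strategy as the paper: reformulate in the renormalized variables $(\Ifact,\Psi_\alpha)$, run a bootstrap combining pointwise control with energy estimates closed via the friction-type spacetime integral, and read off blowup from $\Ifact\to 0$. Two technical refinements in the paper are worth noting: first, in place of your time-integral formula $\partial_a\Ifact = -\int_0^t\partial_a\Psi_0\,ds$, the paper uses the equivalent algebraic identity $\partial_i\Ifact = -\Psi_i + \Ifact\,\mathring{\Psi}_i$, which expresses $\nabla^k\Ifact$ directly in terms of $\nabla^{k-1}\Psi_i$ and is what makes the top-order regularity of $\Ifact$ transparent; second, for the pointwise improvement the paper does not rely on your $\Ifact^{M-1}$-integrability heuristic (which is tied to the model weights) but instead proves, from the abstract assumptions on $\weight$ and by splitting into the regions $\Ifact$ small versus $\Ifact$ large, the uniform pointwise bound $\Ifact^{-1}\weight(\Ifact^{-1}\Psi_0)\lesssim 1$, so that $|\partial_t\Psi_0|\lesssim\varepsilon$ directly and the $\weight$-weighted energies stay uniformly bounded (not merely slowly growing) all the way to $T_{(Lifespan)}$.
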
	

\begin{remark}[\textbf{Maximal development}]
	\label{R:MAXIMALDEVELOPMENT}
	We anticipate that the sharp results of Theorem~\ref{T:ROUGHMAINTHM} should be useful
	for obtaining detailed information about the solution not just up to the first singular time,
	but also up to the boundary
	of the maximal development.\footnote{The maximal development of the data is, roughly,
the largest possible classical solution that is uniquely determined by the data. 
Readers may consult \cites{jSb2016,wW2013} for further discussion.
\label{FN:MAXIMALDEVELOPMENT}}
In the context of shock formation for fluids, Christodoulou \cite{dC2007}*{Chapter 15} 
used similar sharp estimates to follow the solution up to boundary.
Broadly similar results were obtained by Merle--Zaag in \cite{fMzH2012a},
in which, in the case of one spatial dimension,
they gave a sharp description of the 
boundary of the maximal development for \emph{any} singularity-forming solution
to the semilinear focusing wave equation $-\partial_t^2 \Psi + \partial_x^2 \Psi = - |\Psi|^{p-1} \Psi$
with $p > 1$ and showed in particular that characteristic points on the boundary are isolated.
\end{remark}

\begin{remark}[\textbf{The blowup of} $\Phi$]	
	\label{R:BLOWUPOFPHI}
	We now make some remarks on the blowup of $\Phi$ itself since,
	as we highlighted in Remark~\ref{R:NOPHIINWAVEEQUATION}, one does
	not need to prescribe the initial data for $\Phi$ itself
	(and since in the rest of the paper we do not assume that initial data for $\Phi$ itself are prescribed).
	If one does prescribe its initial data, 
	then the results of Theorem~\ref{T:STABILITYOFODEBLOWUP} can easily be used to show
	that $\Phi$ itself blows up at time $T_{(Lifespan)}$ (such a result is not stated in Theorem~\ref{T:STABILITYOFODEBLOWUP}).
	This is philosophically important in that it dashes 
	any hope of weakly continuing the solution past the singularity, at least in a standard sense.
	To deduce the blowup for $\Phi$, one can first use
	\eqref{E:INTROINTEGRATINFACTORODEANDIC} and the fundamental theorem of calculus
	to deduce that
	$\ln \Ifact(t,\underline{x}) + \Phi(t,\underline{x}) = \Phi(0,\underline{x})$,
	where $\Phi(0,\cdot)$ is a regular function that by assumption verifies 
	$\| \Phi(0,\cdot) \|_{L^{\infty}} < \infty$.
	Since the singularity formation for $\partial_t \Phi$ yielded by Theorem~\ref{T:STABILITYOFODEBLOWUP}
	coincides with the vanishing of $\Ifact$ for the first time at $t = T_{(Lifespan)}$,
	it follows that $\lim_{t \uparrow T_{(Lifespan)}} \sup_{s \in [0,t)} \| \Phi\|_{L^{\infty}(\Sigma_s)} = \infty$,
	as is claimed in Theorem~\ref{T:ROUGHMAINTHM}.
\end{remark}

\subsubsection{The main ideas behind the proof}
\label{SSS:DISCUSSIONOFTHEPROOF}
The initial data that we consider are such that the spatial derivatives
of $\Phi$ up to top order are initially small relative to $\partial_t \Phi$.
We also assume that the spatial derivatives of $\partial_t \Phi$ up to top order are initially small.
The smallness assumptions that we need to close the proof are nonlinear in nature,\footnote{In particular, our smallness assumptions
on the data \eqref{E:DATAWAVE}
are \emph{not} generally invariant under rescalings of the form
$(\mathring{\Psi}_0,\mathring{\Psi}_1,\mathring{\Psi}_2,\mathring{\Psi}_3)
\rightarrow
\uplambda^{-1}
(\mathring{\Psi}_0,\mathring{\Psi}_1,\mathring{\Psi}_2,\mathring{\Psi}_3)
$ if $\uplambda$ is too large.} 
for reasons described just below \eqref{E:CARICATURENERGYAPRIORI}; 
see Subsect.\ \ref{SS:SMALLNESSASSUMPTIONS} 
for our precise smallness assumptions
and Subsect.\ \ref{SS:EXISTENCEOFDATA} for a simple proof that such data exist.
In our analysis, we propagate certain aspects of this smallness all the way up to the singularity.
As we mentioned earlier, this allows us to effectively treat equation \eqref{E:WAVE} as a perturbation of 
the Riccati ODE $\frac{d^2}{dt^2} \Phi = (\frac{d}{dt} \Phi)^2$.
We again stress that the vanishing of the coefficient $\weight(\partial_t \Phi)$ of the Laplacian term in \eqref{E:WAVE}
as the singularity forms is important 
for our estimates, in particular for showing that spatial derivative terms 
remain relatively small.

A key point is that it does not seem possible to follow the solution all the way to the singularity 
by studying the wave equation in the form
\eqref{E:WAVE}. To caricature the situation, let us pretend that the singularity occurs at $t = 1$.
Our proof shows, roughly, that $\partial^k \Phi$ blows up like $(1-t)^{-k}$, 
where $\partial^k$ denotes $k^{th}$-order Cartesian coordinate partial derivatives.
This means, in particular, that commuting equation \eqref{E:WAVE} with more and more spatial derivatives makes the
singularity strength of the nonlinear terms worse and worse, 
which is a serious obstacle to closing nonlinear estimates.
For this reason, as our statement of Theorem~\ref{T:ROUGHMAINTHM} already makes clear, 
our proof is fundamentally based on the solution to \eqref{E:INTROINTEGRATINFACTORODEANDIC},
that is, the integrating factor
$\Ifact$ solving the transport equation
$\partial_t \Ifact = - \Ifact \partial_t \Phi$
with initial conditions $\Ifact|_{\Sigma_0} = 1$.
Note that the finite-time blowup $\partial_t \Phi \uparrow \infty$ 
would follow from the finite-time vanishing of $\Ifact$.
Indeed, to show that a singularity forms,
\emph{we will show that $\Ifact$ vanishes in finite time}.
Using $\Ifact$, we are able to transform the wave equation into
a ``regularized'' system, equivalent to equation \eqref{E:WAVE} up to the singularity, 
that we analyze to show that the weighted derivatives
$\lbrace \Psi_{\alpha} := \Ifact \partial_{\alpha} \Phi \rbrace_{\alpha=0,1,2,3}$,
$\Ifact$, and their \emph{Cartesian} spatial partial derivatives remain
bounded, in appropriate norms (some with $\Ifact$ weights), 
all the way up to the singularity.
In particular, our proof relies on a combination of the derivatives 
$\lbrace \Ifact \partial_{\alpha} \rbrace_{\alpha=0,1,2,3}$ and 
$\lbrace \partial_{\alpha} \rbrace_{\alpha=0,1,2,3}$, where the weighted derivatives 
$ \Ifact \partial_{\alpha}$ act first.
Here and throughout, 
$\partial_0 = \partial_t$ and 
$\lbrace \partial_i \rbrace_{i=1,2,3}$ 
are the standard Cartesian coordinate spatial partial derivatives.

In Prop.~\ref{P:RENORMALIZEDEVOLUTOINEQUATIONS}, we derive
the ``regularized'' system of equations verified by
$\lbrace \Psi_{\alpha} \rbrace_{\alpha=0,1,2,3}$.
Here we only note that the system is first-order hyperbolic
and that a \emph{seemingly dangerous factor of $\Ifact^{-1}$ appears in the equations}
(recall that $\Ifact$ vanishes at the singularity).
However, the factor $\Ifact^{-1}$ is multiplied by the weight
$\weight = \weight(\Ifact^{-1} \Psi_0)$
from equation \eqref{E:WAVE}, 
and due to our assumptions on $\weight$,
we are able to show that the product 
$\Ifact^{-1} \weight(\Ifact^{-1} \Psi_0)$
remains uniformly bounded up to the singularity.
Moreover, the spatial derivatives of the product 
$\Ifact^{-1} \weight(\Ifact^{-1} \Psi_0)$
also are controllable up to the singularity;
it is in this sense that the equations verified by $\lbrace \Psi_{\alpha} \rbrace_{\alpha=0,1,2,3}$
can be viewed as a ``regularizing'' of the original problem.
The proof (see Lemma~\ref{L:ESTIMATESINVOLVINGWEIGHT}) of these bounds for the product $\Ifact^{-1} \weight(\Ifact^{-1} \Psi_0)$
constitutes the most technical analysis of the article
and is based on separately treating regions
where $\Ifact$ is large and $\Ifact$ is small.

To prove that $\partial_t \Phi$ blows up, 
we derive, in an appropriate \emph{localized} region of spacetime,
a pointwise bound for $\Psi_0$ of the form $\Psi_0 \gtrsim 1$.
In view of the evolution equation for $\Ifact$, we see that
such a bound is strong enough to drive $\Ifact$ to $0$ in finite time.
To prove that $\Psi_0 \gtrsim 1$, we of course rely on the size assumptions described 
in the first paragraph of this subsusbsection,
which in particular include the assumption that $\Psi_0|_{\Sigma_0} \gtrsim 1$ (in a localized region).
If we caricature the situation by assuming the estimate\footnote{Here we use the notation ``$A \sim B$''
to imprecisely indicated that $A$ is well-approximated by $B$.} 
$\Psi_0 \sim \delta$
for some $\delta > 0$, then it follows 
from the evolution equation for $\Ifact$ that
$\Ifact \sim 1 - \delta t$, 
$\partial_t \Phi \sim (1- \delta t)^{-1}$,
$\ln \Ifact + \Phi \sim data$,
and thus $\Phi \sim \ln (1- \delta t)^{-1} + data$,
where $data$ is a smooth function determined by the initial data.
Note that $\ln (1- \delta t)^{-1}$ is one of the ODE blowup solutions \eqref{E:ODEBLOWWINGUPSOLUTION}.
It is in this sense that our results yield the stability of ODE-type blowup.

In reality, to close the proof sketch described above, 
we must overcome several major difficulties. The first
is that the blowup time is not known in advance. 
However, we are able to make a good approximate guess
for it, which is sufficient for closing a bootstrap argument.
We now describe what we mean by this. 
The discussion in the previous paragraph suggests 
that the (future) blowup time is approximately $\frac{1}{\mathring{A}_*}$, 
where $\mathring{A}_* := \max_{\Sigma_0} [\mathring{\Psi}_0]_+$ (where $\mathring{A}_* > 0$ by assumption).
Indeed, if we set all spatial derivative terms equal to zero in equation \eqref{E:WAVE},
then the time of first blowup is precisely $\frac{1}{\mathring{A}_*}$.
Our main theorem confirms that for data with small spatial derivatives,
the blowup time is a small perturbation of $\frac{1}{\mathring{A}_*}$.
This is conceptually important in that it enables us to use a bootstrap argument
in which we only aim to control the solution for times less than
$\frac{2}{\mathring{A}_*}$; the factor of $2$ gives us a sufficient margin of error 
to show that the singularity does form, and it allows us, in most cases,
to soak factors of $\frac{1}{\mathring{A}_*}$ into the constants ``$C$''
in our estimates; see Subsect.\ \ref{SS:CONVENTIONSFORCONSTANTS} for further
discussion on our conventions for constants.

The second and main difficulty that we encounter in our proof
is that we need to derive energy estimates
for $\lbrace \Psi_{\alpha} \rbrace_{\alpha=0,1,2,3}$
that hold up to the singularity and, 
at the same time, to control the integrating factor $\Ifact$;
most of our work in this paper is towards this goal.
Our energies are roughly of the following form,
where $V = (V_0,V_1,V_2,V_3)$ should be thought of as
some $k^{th}$ Cartesian spatial derivative of $(\Psi_0,\Psi_1,\Psi_2,\Psi_3)$:
\begin{align} \label{E:INTROBASICENERGY}
	\mathbb{E}[V]
	& = \mathbb{E}[V](t)
		:= \int_{\Sigma_t}
					\left\lbrace
						V_0^2
						+
						\sum_{a=1}^3
						\weight(\Ifact^{-1} \Psi_0)
						V_a^2
					\right\rbrace
			 \, d \underline{x}.
	\end{align}
	For the data under study, $\mathbb{E}[V](0)$ is small.
	Since $\mathcal{I}$ is small near the singularity and $\Psi_0$ is order-unity,
	our assumptions on $\weight$ imply that the factor $\weight(\Ifact^{-1} \Psi_0)$
	on RHS~\eqref{E:INTROBASICENERGY} is small near the singularity, i.e., 
	the energy provides only weak control over $V_a^2$. 
	This makes it difficult to control certain terms in the energy identities,
	which arise from commutator error terms (that are generated upon commuting the evolution equations for 
	$\lbrace \Psi_{\alpha} \rbrace_{\alpha = 0,1,2,3}$ with spatial derivatives)
	and from the basic integration by parts argument that we use to derive the energy
	identities.
	To control the most difficult error integrals,
	we exploit the following spacetime integral, which also appears in the energy identities
	(roughly it is generated when $\partial_t$ falls on the weight $\weight(\Ifact^{-1} \Psi_0)$ on RHS~\eqref{E:INTROBASICENERGY}):
	\begin{align} \label{E:INTROKEYSPACETIMEINTEGRAL}
	\mathfrak{I}[V](t) :=
	\sum_{a=1}^3
		\int_{s=0}^t
		\int_{\Sigma_s}
			(\Ifact^{-1} \Psi_0)^2 \weight'(\Ifact^{-1} \Psi_0)
			(V_a)^2
		\, d \underline{x}
		\, ds,
	\end{align}
	where $\weight'(y) = \frac{d}{dy} \weight(y)$.
	A good model scenario to keep in mind is the case $\weight = \frac{1}{1 + \partial_t \Phi}$
	in regions where $\partial_t \Phi$ is large (and thus the energy \eqref{E:INTROBASICENERGY} is weak),
	in which case $\weight' = - \frac{1}{(1 + \partial_t \Phi)^2}$,
	and the factor 
	$
	(\Ifact^{-1} \Psi_0)^2 \weight'(\Ifact^{-1} \Psi_0)
	$
	on RHS~\eqref{E:INTROKEYSPACETIMEINTEGRAL}
	can be expressed as
	$
	- \frac{(\partial_t \Phi)^2}{(1 + \partial_t \Phi)^2}
	$.
	In view of our assumptions on $\weight$, the 
	term $\weight'(\Ifact^{-1} \Psi_0)$ has a \emph{quantitatively negative} sign
	in the difficult regions where $\Ifact$ is small (which is equivalent to the largeness of $\partial_t \Phi$). 
	More precisely, 
	\eqref{E:INTROKEYSPACETIMEINTEGRAL} has a \emph{friction-type} sign.
	This is important because the difficult error integrals mentioned above 
	can be bounded by $\lesssim \varepsilon \mathfrak{I}[V](t)$,
	where, roughly, $\varepsilon$ is the small $L^{\infty}$ size of the spatial derivatives.
	For this reason, the integral \eqref{E:INTROKEYSPACETIMEINTEGRAL} can be used
	to absorb the difficult error integrals. 
	In total, this allows us to prove a priori energy estimates, roughly of the following form:
	\begin{align} \label{E:CARICATURENERGYAPRIORI}
		\mathbb{E}[V](t)
		+ 
		\mathfrak{I}[V](t)
		& \leq data \times C \exp(C t),
	\end{align}
	where ``$data$'' is, roughly, the small size of the spatial derivatives of the data.
	For our proof to close, RHS~\eqref{E:CARICATURENERGYAPRIORI} must be sufficiently small.
	Thanks to our bootstrap assumption that $t < \frac{2}{\mathring{A}_*}$,
	it suffices to choose that initial data so that
	$
	data \times C \exp(\frac{C}{\mathring{A}_*})
	$
	is sufficiently small. This is one example of the \emph{nonlinear smallness} of the spatial derivatives, 
	relative to $\mathring{A}_*$, that we impose to close the proof.
	In reality, to make this procedure work,
	we must separately treat regions where $\Ifact$ is small and $\Ifact$ is large; see
	Prop.~\ref{P:APRIORIESTIMATES} and its proof for the details.
	We stress that absorbing the difficult error integrals into the friction integral \eqref{E:INTROKEYSPACETIMEINTEGRAL}
	\emph{is crucial for showing that the energies remain bounded up to the vanishing of $\Ifact$},
	which is in turn central for our approach.
	In the model case $\weight = \frac{1}{1 + \partial_t \Phi}$,
	if we had instead tried to directly control the difficult error integrals by the energy, 
	then we would have obtained the inequality
	$\mathbb{E}[V](t) \leq 
	C
	\int_{s=0}^t
		\left\|
			\partial_t \Phi
		\right\|_{L^{\infty}(\Sigma_s)}
		\mathbb{E}[V](s)
	\, ds
	+ 
	\cdots
	$.
	Since 
		$\left\|
			\partial_t \Phi
		\right\|_{L^{\infty}(\Sigma_s)}$
	goes to infinity at a non-integrable rate\footnote{With a bit of additional effort,
	Theorem~\ref{T:STABILITYOFODEBLOWUP} could be sharpened to show that
	$
	\displaystyle
	\left\|
			\partial_t \Phi
		\right\|_{L^{\infty}(\Sigma_t)}$
		blows up like
		$\frac{c}{T_{(Lifespan)} - t}$,
		where $c$ is a positive data-dependent constant.}
	as the blowup-time is approached,
	this would have led to a priori energy estimates
	allowing for the possibility that the energies blow up at the singularity, 
	which would have completely invalidated our philosophy of obtaining non-singular estimates 
	for the $\lbrace \Psi_{\alpha} \rbrace_{\alpha=0,1,2,3}$.
	We also highlight that the regularity theory of $\Ifact$ is somewhat subtle at top order:
	our proof requires that we show that $\Ifact$ and $\Phi$ have the same degree of differentiability 
	and that the estimates for $\Ifact$ do not involve any dangerous factors of $\Ifact^{-1}$;
	these features are not immediately apparent from equation \eqref{E:INTROINTEGRATINFACTORODEANDIC}.

The circle of ideas tied to the ``regularization approach'' that we have taken 
here seems to be new in the context of proving the stability of ODE-type blowup
for a quasilinear wave equation.
However, our approach has some parallels with
the known proofs of stable shock formation in multiple spatial dimensions, which we describe 
in Subsect.\ \ref{SS:PRIORBREAKDOWNRESULTS}. In those problems, the crux of the proofs also involve
quasilinear integrating factors that ``hide'' the singularity.
In shock formation problems, the integrating factor is 
tied to nonlinear geometric optics,\footnote{In the shock formation problems 
described in Subsect.\ \ref{SS:PRIORBREAKDOWNRESULTS}, 
the integrating factor is the inverse foliation density 
of a family of characteristic hypersurfaces, which are the level sets of an eikonal function.} 
and its top-order regularity theory 
is very difficult (much more so than the top-order regularity theory of the integrating factor employed in the present article). 
The proofs also crucially rely on friction-type spacetime integrals,
in analogy with \eqref{E:INTROKEYSPACETIMEINTEGRAL},
that are available because the integrating factor has a negative derivative (in an appropriate direction)
in regions near the singularity.
However, in shock formation problems, the top-order energy identities feature dangerous terms,
analogous to terms of strength $\Ifact^{-1}$, which leads to a priori energy estimates
allowing for the possibility that the high-order energies might blow up like $\Ifact^{-P}$ 
for some large universal constant $P$. This makes it difficult to derive the non-singular estimates
at the lower derivative levels, which are central for closing the proof. In contrast, in our work here,
the difficult factors of $\Ifact^{-1}$ are always multiplied by the term $\weight$, 
which effectively ameliorates them, making it easier to close the energy estimates. 
On the other hand, the singularities that form in the solutions from our main results are much
more severe in that $\Phi$ and $\partial_t \Phi$ blow up;
in contrast, in the shock formation results 
(see Subsect.\ \ref{SS:PRIORBREAKDOWNRESULTS})
for equations whose principal part is similar
to that of \eqref{E:FIRSTMODELWAVE},
$|\Phi|$ and $\lbrace |\partial_{\alpha} \Phi| \rbrace_{\alpha=0,1,2,3}$ remain
bounded up to the singularity, while
$\max_{\alpha, \beta = 0,1,2,3}|\partial_{\alpha} \partial_{\beta} \Phi|$
blows up in finite time. Our approach to proving Theorem~\ref{T:ROUGHMAINTHM} 
also has some parallels with Kichenassamy's stable blowup results
\cite{sKi1996a} for semilinear wave equations with exponential nonlinearities,
but we defer further discussion of this point until the next subsection.

\subsection{Our results in the context of prior breakdown results}
\label{SS:PRIORBREAKDOWNRESULTS}
There are many prior breakdown results for solutions to various hyperbolic equations,
especially of wave type. Here we give a non-exhaustive account of some of these works,
which is meant to give the reader some feel 
for the kinds of results that are known
and how they compare with/contrast against our main results.
In particular, we aim to expose how the
proof techniques vary wildly between different types of blowup results.
We separate the results into seven classes.
\begin{enumerate}
	\item (\textbf{Proofs of blowup by contradiction})
		For various hyperbolic systems, there are proofs of blowup by contradiction,
		based on showing that for smooth solutions, 
		certain spatially averaged quantities satisfy ordinary differential
		inequalities that force them to blow up in finite time, 
		contradicting the assumed smoothness.
		Notable contributions of this type are
		John's works \cites{fJ1979,fJ1981} on several classes of nonlinear wave equations 
		with signed nonlinearities and Sideris' proof \cite{tS1984} of blowup for various hyperbolic systems,
		for semilinear wave equations in higher dimensions \cite{tS1984b} (which improved upon Kato's result \cite{tK1980}),
		and for the compressible Euler equations in three spatial dimensions \cite{tS1985}.
		See also \cite{yGsTZ1998} for similar results in the case of the relativistic Euler and Euler--Maxwell equations.
		None of these results yield constructive information about the nature of the blowup, nor do they
		apply to the wave equations under study here.
\item (\textbf{Blowup for semilinear wave equations with power-law nonlinearities})
		There are many interesting constructive blowup results, 
		in various spatial dimensions, 
		for focusing semilinear wave equations of the form
		$\square_m \Phi = - |\Phi|^{p-1} \Phi$,
		where $\square_m := - \partial_t^2 + \Delta$ is the wave operator of
		the Minkowski metric $m$. 
		A notable difference between these works
		and our work here is that these works relied on a careful analysis 
		of the spectrum of a linearized operator.
		We now discuss some specific examples.
		
		In \cite{rD2010}, in three spatial dimensions and under the assumption of radial symmetry, 
		Donninger proved
		the nonlinear stability of the ODE blowup solutions 
		$\Phi_{(ODE);T} := c_p(T-t)^{- \frac{2}{p-1}}$,
		where\footnote{Actually, for convenience, Donninger considered the semilinear term
		$- \Phi^p$ in \cite{rD2010}. However, as he noted there, his work could be extended to apply to
		the term $- |\Phi|^{p-1} \Phi$ for $p > 1$.}  
		$p=3,5,7,\cdots$.
		In three spatial dimensions, in the subcritical cases $p \in (1,3]$,
		Donninger--Sch\"{o}rkhuber proved \cite{rDbS2012} an asymptotic stability result
		for $\Phi_{(ODE);T}$
		under radially symmetric perturbations of the data
		in the energy space, thereby sharpening (in the near-ODE case) the works \cites{fMhZ2003,fMhZ2005},
		which yielded the \emph{all} solutions that blow up 
		do so at the rate
		$
		(T-t)^{- \frac{2}{p-1}}
		$,
		but which did not yield the asymptotic profile.
		In \cite{rDbS2014}, Donninger--Sch\"{o}rkhuber 
		extended their stability results (still within radial symmetry)
		to the supercritical cases $p > 3$, but 
		they assumed additional regularity on the initial data 
		(which they believed to be essential for closing the proof).
		
		In three spatial dimensions, in the critical case $p=5$,
		there are many blowup results tied to the ground
		state solution $W(r) := (1 + r^2/3)^{-1/2}$.
		In \cite{ceKfM2008}, for solutions with (conserved) energy below that of the ground state,
		Kenig--Merle established a sharp dichotomy showing that solutions blow up 
		in finite time to the past and future
		if $\| \Phi \|_{\dot{H}^1(\Sigma_0)} > \| W \|_{\dot{H}^1(\Sigma_0)}$, while they
		exist globally and scatter if $\| \Phi \|_{\dot{H}^1(\Sigma_0)} < \| W \|_{\dot{H}^1(\Sigma_0)}$.
		For the same equation, the authors of \cite{jKwSdT2009} proved the existence of
		radially symmetric ``slow'' Type II blowup solutions $\Phi(t,r) = \lambda^{1/2}(t) W(\lambda(t) r) + w(t,r)$,
		where $w$ is a small error term,
		$\lambda(t) := t^{-1-\nu}$, $\nu > 1/2$, and the singularity occurs at $t=0$.
		In this context, a Type II singularity is such that the solution remains uniformly bounded in the energy space
		(which is critical) up to the time of first blowup.
		The results were extended to $\nu > 0$ in \cite{jKwS2014}.
		In \cite{rDmHjKwS2014}, the results were extended 
		to cases in which $\lambda(t)$ does not behave like a power law.
		In \cite{mHpR2012}, Hillairet--Rapha\"{e}l constructed
		Type II blowup solutions for the critical focusing wave equation in four spatial dimensions.
		Jendrej treated the case of five spatial dimensions in \cite{jJ2017}.
		For the radial critical focusing wave equation in three spatial dimensions,
		the work \cite{tDcKfM2011} yielded that if the blowup-time
		$T$ is finite
		and if the quantitative type II condition
		$\sup_{t \in [0,T)} 
		\left\lbrace
			\| \partial_t \Phi \|_{L^2(\Sigma_t)}^2 
			+ 
			\| \nabla \Phi \|_{L^2(\Sigma_t)}^2
		\right\rbrace
		\leq \| \nabla W \|_{L^2}^2 + \eta_0
		$
		holds, where $W$ is the ground state and $\eta_0 > 0$ is a small constant,
		then the blowup asymptotics are of the type exhibited by the solutions constructed in \cite{jKwSdT2009}.
		The results were extended to the non-radial case in three and five spatial dimensions in 
		\cite{tDcKfM2012}. Similar results were obtained in the case of four 
		spatial dimensions in \cite{rCcKaLwS2014} in the radial case.
		In \cite{tDcKfM2013}, the authors gave a detailed description 
		of the possible large-time behaviors of all finite-energy
		radial solutions to the focusing critical wave equation in three spatial
		dimensions, extending the work \cite{tDcKfM2012b}, where information
		along a sequence of times was obtained.
		In \cite{jJ2016}, for $n \in \lbrace 3,4,5 \rbrace$ spatial dimensions,
		Jendrej proved an upper bound for
		the blowup rate $\lambda(t)$
		for Type II blowup solutions whose asymptotics are
		$\Phi(t,r) = [\lambda(t)]^{(n-2)/2} W(\lambda(t) r) + w(t,r)$,
		with $w$ sufficiently regular.
		\item (\textbf{Constructive blowup results for wave maps})
		There are similar blowup results for some wave maps whose targets
		admit a non-trivial harmonic map.
		For example, for the critical case of the wave maps equation
		$\square_m \Phi = \Phi(|\partial_t \Phi|^2 - |\nabla \Phi|^2)$,
		where $\Phi:\mathbb{R}^{1+2} \rightarrow \mathbb{S}^2$,
		under the equivariant symmetry assumption
		$\Phi(t,r) = (k \theta,\phi(t,r))$, where
		the first and second entries on the RHS are Euler angles parameterizing $\mathbb{S}^2$
		and $k \in \mathbb{Z}_+$,
		there are blowup results tied to the 
		ground state $Q(r) := 2 \arctan(r^k)$.
		In \cite{iRjS2010}, Rodnianski--Sterbenz
		gave a sharp description of \emph{stable} blowup 
		when $k \geq 4$. They showed that (under the symmetry assumptions),
		there is an open set of data with energy slightly larger than the ground state
		whose solutions blow up at a time $T < \infty$.
		Moreover, the asymptotics can be described as
		$\phi(t,r) = Q(t,r/\lambda(t)) + q(t,r)$,
		where $\lambda(t) \to 0$ as $t \uparrow T$,
		$\lambda(t) \geq \frac{T-t}{|\ln(T-t)|^{1/4}}$,
		and
		$(q,\partial_t q)$ is small in $\dot{H}^1 \times L^2$.
		In particular, $Q$ is the universal blowup profile.
		A key point of the proof is to derive and analyze an appropriate \emph{modulation equation},
		that is, the ODE (which is coupled to the PDE) that governs the evolution of $\lambda(t)$.
		The function $\lambda(t)$ is somewhat analogous to the integrating factor $\Ifact$ that we use in our work here.
		In \cite{pRiR2012},
		Rodnianski--Rapha\"{e}l extended the results to all cases $k \geq 1$,
		proving \emph{stable} blowup with
	 	$\lambda(t) = c_k \left(1 + o(1) \right) \frac{T-t}{\left| \ln(T-t) \right|^{\frac{1}{2k-2}}}$
		as $t \uparrow T$ in the cases $k \geq 2$,
		and, in the case $k=1$,
		$\lambda(t) = (T-t) \exp \left(- \sqrt{\ln|T-t|} + O(1) \right)$
		as $t \uparrow T$.
		In \cite{jKwSdD2008}, in the case $k=1$,
		the authors proved the existence 
		of a continuum of related solutions 
		(believed to be non-generic)
		exhibiting the blowup rates
		$\lambda(t) = (T-t)^{\nu}$, where $\nu > 3/2$.
		The results were extended to $\nu > 1$ in \cite{cGjK2015}.
		In \cites{rCcKaLwS2015a}, in the equivariance class $k=1$, 
		the authors proved that within the sub-class of degree $0$ maps (i.e., in radial coordinates $(t,r)$, one assumes
		$\Phi(0,0) = \Phi(0,\infty) = 0$),
		there exist solutions with energy above but arbitrarily close to twice the energy of the ground
		state that blow up in finite time. 
		Within the sub-class of degree $1$ maps (i.e., $\Phi(0,0) = 0$ and $\Phi(0,\infty) = \pi$),
		for maps with energy bigger than that of the ground state but less than three times 
		the energy of the ground state, the authors show that if a singularity forms,
		then the solution has asymptotics whose blowup profiles
		are the same as those from the works \cites{jKwSdD2008,iRjS2010,pRiR2012}.
		For equivariant wave maps $\Phi : \mathbb{R} \times \mathbb{S}^2 \rightarrow \mathbb{S}^2$,
		in the class $k=1$,
		Shahshahani proved \cite{sS2016} the existence of a continuum of blowup solutions.
		In \cites{rD2011,rDbSpA2012},
		in the supercritical context of equivariant wave maps from $\mathbb{R}^{1+3}$ into $\mathbb{S}^3$,
		the authors proved the stability of self-similar blowup solutions
		$\Phi_T(t,r) := 2 \arctan \left(\frac{r}{T-r} \right)$. 
		More precisely, the results were conditional and relied 
		on some mode stability results for which there is strong evidence
		in the form of analysis and numerics.
	\item (\textbf{Blowup for semilinear wave equations with exponential nonlinearities})
		In \cite{sKi1996a}, for the focusing semilinear wave equation $\square_m \Phi = - e^{\Phi}$
		in three spatial dimensions,
		Kichenassamy proved that the singular solution $\ln(2/t^2)$ 
		is stable under perturbations of the data along the constant-time hypersurface
		$\lbrace t = -1 \rbrace$. 
		Moreover, he showed that the blowup-hypersurface is
		of the form $\lbrace t = f(x) \rbrace$, where $f(x)$ loses Sobolev regularity 
		compared to the initial data.
		It would be interesting to see if our main results could be extended to show a similar result
		for the equations under study here. More precisely, we conjecture that 
		a portion of the blowup surface is $\lbrace \Ifact = 0 \rbrace$ for the solutions under study here.
		Kichenassamy's work has one key feature in common with ours: 
		he devises a reformulation of the wave equation in which no singularity is visible, in his
		case by constructing a singular change of coordinates adapted to the singularity; 
		this is broadly similar to the approach taken by authors who have proved shock formation results, 
		as we describe just below. 
		However, unlike the ``forwards approach'' that we take in this article, 
		Kichenassamy used a ``backwards approach'' 
		in which he first solved problems in which the singularity was \emph{prescribed} 
		along blowup-hypersurfaces and then showed that the map from the singularity to the Cauchy data along
		$\lbrace t = -1 \rbrace$ is invertible. His proof of the invertibility of the 
		map from the singularity data to the Cauchy data
		was based on studying appropriately linearized versions of 
		the equations and on using Fuchsian techniques. 
		The linearized equations exhibited derivative loss, and 
		Kichenassamy used a Nash--Moser approach to overcome the loss.
	\item (\textbf{Shock formation for quasilinear equations})
	Roughly speaking, a shock singularity is such that the solution remains bounded but
	one of its derivatives blows up. There are many classical shock formation results 
	in one spatial dimension, based on the method of characteristics,
	with important contributions coming from
	Riemann \cite{bR1860},
	Lax \cites{pL1964,pL1972,pL1973},
	and John \cite{fJ1974}, among others. Even in one spatial dimension, the field is still active, 
	as is evidenced by the recent work of \cite{dCdRP2016} of Christodoulou--Perez, 
	in which they significantly sharpened John's work \cite{fJ1974}, giving a complete
	description of the singularity.

	Alinhac \cites{sA1999a,sA1999b,sA2001b} obtained the first results
	on shock formation without symmetry assumptions in more than one spatial
	dimension. The main new difficulty in more than one spatial dimension
	is that the method of characteristics must
	be supplemented with energy estimates, which leads to enormous technical complications.
	Alinhac's work applied 
	to small-data solutions to a class of
	scalar quasilinear wave equations
	of the form 
	\begin{align} \label{E:ALINHACWAVE}
		(g^{-1})^{\alpha \beta}(\partial \Phi) \partial_{\alpha} \partial_{\beta} \Phi = 0
	\end{align}
	that fail to satisfy the null condition.
	He showed that for a set of ``non-degenerate'' small data,
	$\Phi$ and $\partial \Phi$ remain bounded, while $\partial^2 \Phi$ blows up in finite time
	due to the intersection of the characteristics.
	Alinhac's proof fundamentally relied on nonlinear geometric optics,
	that is, on an eikonal function, which is a solution
	to the eikonal equation
	\begin{align} \label{E:WAVEEIKONAL}
	(g^{-1})^{\alpha \beta}(\partial \Phi) \partial_{\alpha} u \partial_{\beta} u
	& = 0,
\end{align}
supplemented with appropriate initial data.
The level sets of $u$ are characteristic hypersurfaces for equation \eqref{E:ALINHACWAVE}.
As it turns out, the intersection of the level sets of $u$ is tied to the formation of a singularity
in the solution to \eqref{E:ALINHACWAVE}.
Much like in the present work, the main estimates in Alinhac's proof did not concern singularities;
the crux of his proof was to construct a system of geometric coordinates, one of which is $u$, 
and to prove that relative to them,
the solution remains very smooth, except possibly at the very high derivative levels. 
He then showed that a singularity forms 
in the standard second-order derivatives $\partial^2 \Phi$ 
as a consequence of a finite-time degeneracy 
between the geometric coordinates and the standard coordinates; roughly, the level sets of $u$
intersect and cause the blowup, much like in the classical example of the blowup of solutions to Burgers' equation.
The main challenge in the proof is that
to derive energy estimates relative to the geometric coordinates, one must control the eikonal function,
whose top-order regularity properties are difficult to obtain; naive estimates lead to the loss of a derivative.
The regularity properties of eikonal functions had previously been understood
in certain problems for quasilinear wave equations in which singularities did not form. 
For example, the first quasilinear wave problem in which the regularity properties of eikonal functions were fully exploited was the
celebrated proof \cites{dCsK1993} of the stability of Minkowski spacetime. 
Eikonal functions have also played a central role
in proofs of low-regularity well-posedness for quasilinear wave equations \cites{sKiR2003,sKiR2005d,hSdT2005,sKiRjS2015}.
However, unlike in these problems,
in the problem of shock formation, the top-order geometric energy estimates feature a degenerate weight
that vanishes as the shock forms, which leads to a priori estimates allowing for the possibility that the high-order
energies might blow up; note that this possible geometric energy blowup is distinct from the formation of the shock, 
which happens at the low derivative levels relative to the standard coordinates. 
The ``degenerate weight'' is the inverse foliation density of the level sets of $u$.
It vanishes when the characteristics intersect, 
and it is in some ways analogous to the integrating factor $\Ifact$ that we use in our work here.
Alinhac closed the singular top-order energy estimates with a Nash--Moser iteration scheme
that was adapted to the singularity and that handled the issue of the regularity theory of $u$ in a different
way than \cites{dCsK1993,sKiR2003,sKiR2005d,hSdT2005,sKiRjS2015}. 
He then used a ``descent scheme'' to show that the top-order geometric energy blowup
does not propagate down too far to the lower derivative levels. Consequently,
the solution remains highly differentiable relative to the geometric coordinates.

Due to his reliance on the Nash--Moser iteration scheme, Alinhac's proof applied only to 
``non-degenerate'' initial data such that the first
singularity is isolated in the constant-time hypersurface of first blowup, and his framework breaks down
precisely at the time of first blowup. 
For this reason, his approach is inadequate
for following the solution to the boundary of the maximal development
of the data (see Footnote~\ref{FN:MAXIMALDEVELOPMENT}), 
which intersects the future of the first singular time.
In his breakthrough work \cite{dC2007}, Christodoulou overcame this drawback and significantly sharpened
Alinhac's results for the quasilinear wave equations of irrotational relativistic fluid mechanics.
More precisely, Christodoulou's proof yielded a sharp description of the solution up to the boundary of the maximal development. 
This information was essential even for setting up the shock development problem, which, roughly speaking,
is the problem of uniquely extending the solution past the singularity in a weak sense, subject to 
appropriate jump conditions.
We note that the shock development problem
in relativistic fluid mechanics was solved in spherical symmetry by Christodoulou--Lisibach in \cite{dCaL2016}
and, by Christodoulou in a recent breakthrough work \cite{dC2017}, 
for the non-relativistic compressible Euler equations without symmetry assumptions 
in a restricted case 
(known as the restricted shock development problem)
such that the jump in entropy across the shock hypersurface is ignored.

The wave equations studied by Christodoulou in \cite{dC2007} form a sub-class of the ones \eqref{E:ALINHACWAVE} 
studied by Alinhac. They enjoy special
properties that Christodoulou used in his proofs, notably
an Euler-Lagrange formulation whose Lagrangian is invariant under various symmetry groups.
The main technical improvement afforded by Christodoulou's framework is that 
in closing the energy estimates, he avoided using a Nash--Moser iteration scheme
and instead used an approach similar to the one employed in the aforementioned works 
\cites{dCsK1993,sKiR2003}. This approach is more robust and is capable of accommodating solutions such that 
the blowup occurs along a hypersurface, which, in the problem of shock formation, 
is what typically occurs along a portion
of the boundary of the maximal development.
Christodoulou's results have since been extended in many directions, including
to apply to other wave equations \cites{dCsM2014,jS2016b},
different sets of initial data \cites{jSgHjLwW2016,sMpY2017,sM2016},
the compressible Euler equations with non-zero vorticity \cites{jLjS2016a,jLjS2016b,jS2017a},
systems of wave equations with multiple speeds \cite{jS2017b},
and quasilinear systems in which a solution to a transport equation forms a shock
\cite{jS2017c}.
Some of the earlier extensions are explained in detail in the survey article \cite{gHsKjSwW2016}.
\item (\textbf{Breakdown results for Einstein's equations})
	The Einstein equations of general relativity have many remarkable properties and as such, 
	it is not surprising that there are stable breakdown results that are specialized to 
	these equations. Here we simply highlight the following constructive 
	results in three spatial dimensions without symmetry assumptions:
	Christodoulou's breakthrough results \cite{dC2009} on the formation of trapped surfaces
	and the stable singularity formation results \cites{iRjS2014a,iRjS2014b,jL2013}.
	The work \cite{iRjS2014b} can be viewed as a stable ODE-type blowup result 
	for Einstein's equations in which the wave speed became infinite at the singularity.
	Note that in contrast, for equation \eqref{E:WAVE}, the wave speed vanishes when $\partial_t \Phi$
	blows up. 
\item (\textbf{Finite time degeneration of hyperbolicity})
	In \cite{jS2016}, we studied the wave equations
	$- \partial_t^2 \Psi + (1 + \Psi)^P \Delta \Psi = 0$
	in three spatial dimensions, for $P = 1,2$. We showed that there
	exists an open set of data such that $\Psi$ is initially small but
	$1 + \Psi$ vanishes in finite time,
	corresponding to a breakdown in the hyperbolicity of the equation,
	but without any singularity forming. The difficult part of the proof is 
	closing the energy estimates in regions where $1 + \Psi$ is small.
	The proof has some features in common with the proof of the main results of this paper.
	For example, the proof relies on monotonicity tied to the sign of $\partial_t \Psi$
	and the small size of $\nabla \Psi$. In particular, this leads to the availability
	of a friction-type integral in the energy identities,
	analogous to the one \eqref{E:INTROKEYSPACETIMEINTEGRAL},
	which is crucially important for controlling certain error terms.
\end{enumerate}

\subsection{Different kinds of singularity formation within the same quasilinear hyperbolic system}
\label{SS:DIFFERENTKINDSOFSINGULARITYFORMATION}
In this subsection, we show that there are quasilinear wave equations,
closely related to the wave equation \eqref{E:WAVE},
that can exhibit two distinct kinds of blowup: 
ODE-type blowup for one set of data, and
the formation of shocks for another set.
The ODE-type blowup is provided by our main
results, so in this subsection, 
most of the discussion is centered on
shock formation. Our discussion is based on ideas and
techniques found in the works \cites{dC2007,jS2016b}.

To initiate the discussion, we define
\begin{align} \label{E:PHIALPHA}
	\Phi_0 
	& := \partial_t \Phi.
\end{align}
For convenience, we will restrict our discussion to the specific
weight $\weight = \frac{1}{1 + \partial_t \Phi} = \frac{1}{1 + \Phi_0}$,
though similar results hold for any weight that verifies the assumptions
of Subsect.\ \ref{SS:WEIGHTASSUMPTIONS}.
To proceed, we differentiate equation \eqref{E:WAVE} with $\partial_t$
to obtain the following closed equation in $\Phi_0$:
\begin{align} \label{E:PHI0CLOSEDEQUATION}
\partial_t^2 \Phi_0
-
\frac{1}{1 + \Phi_0}
\Delta \Phi_0
=
-
\frac{1}{1 + \Phi_0}
(\partial_t \Phi_0)^2
+
\frac{2 \Phi_0}{1 + \Phi_0} \partial_t \Phi_0
+
\frac{3 \Phi_0^2}{1 + \Phi_0} \partial_t \Phi_0.
\end{align}
In the remainder of our discussion of shock formation,
we will only consider plane symmetric solutions,
that is, solutions that depend only on $t$ and $x^1$.
Note that in equation \eqref{E:PHI0CLOSEDEQUATION}, 
$\Delta = \partial_1^2$ for plane symmetric solutions.

To study plane symmetric solutions to \eqref{E:PHI0CLOSEDEQUATION}, we will use the characteristic vectorfields
\begin{align}
	L := \partial_t + \frac{1}{\sqrt{1 + \Phi_0}} \partial_1,
	&& \underline{L} := \partial_t - \frac{1}{\sqrt{1 + \Phi_0}} \partial_1.
\end{align}
We next define the characteristic coordinate $u$ to be the solution to the following transport equation:
\begin{align} \label{E:EIKONALDEF}
	L u 
	& = 0,
	&
	u|_{\Sigma_0}
	& = 1 - x^1.
\end{align}
We view $u$ as a new coordinate adapted to the characteristics,
and we will use the ``geometric'' coordinate system $(t,u)$ when analyzing solutions.
In particular, the level sets of $u$ are characteristic for equation \eqref{E:PHI0CLOSEDEQUATION}.
We define $\Sigma_t^{u'}$, 
relative to the geometric coordinates,
to be the following subset:
$\Sigma_t^{u'} := \lbrace (t,u) \ | \ 0 \leq u \leq u' \rbrace$.
Note that $\Sigma_0^1$ can be identified with an orientation reversed version of the unit $x^1$ interval $[0,1]$.
Associated to $u$, we define the \emph{inverse foliation density} $\upmu > 0$
by 
\begin{align} \label{E:MUDEF}
	\upmu & := \frac{1}{\partial_t u}.
\end{align}
$1/\upmu$ is a measure of the density of the level sets of $u$.
$\upmu = 0$ corresponds to the intersection of the characteristics,
that is, the formation of a shock.
From \eqref{E:EIKONALDEF}, it follows that $\upmu|_{\Sigma_0} = \sqrt{1 + \Phi_0} = 1 + \mathcal{O}(\Phi_0)$ 
(for $\Phi_0$ small). 
One can check that from the above definitions, we have
$L u = 0$, $L t = 1$, $\upmu \underline{L} t = \upmu $, 
and $\upmu \underline{L} u = 2 $.
In particular, 
$L = \frac{d}{dt}$ along the integral curves of $L$
and $\upmu \underline{L} = 2 \frac{d}{du}$ along the integral curves of
$\upmu \underline{L}$.

Next, we differentiate equation \eqref{E:WAVE}
with $\partial_t$ and carry out tedious but straightforward calculations 
to obtain the following system in $\Phi_0$ and $\upmu$:
\begin{subequations}
\begin{align}
	L (\upmu \underline{L} \Phi_0)
	& = 
		- 
		\frac{1}{2(1 + \Phi_0)} (L \Phi_0) (\upmu \underline{L} \Phi_0)
		\label{E:LONOUTSIDE} \\
& \ \
	+
		\upmu 
		\frac{\Phi_0}{1 + \Phi_0} 
		\left\lbrace
			1 + \frac{3}{2} \Phi_0
		\right\rbrace
		L \Phi_0
		+
		\frac{\Phi_0}{1 + \Phi_0} 
		\left\lbrace
			1 + \frac{3}{2} \Phi_0
		\right\rbrace
		(\upmu \underline{L} \Phi_0),
		\notag \\
	\upmu \underline{L} L \Phi_0
	& =
-
\frac{\upmu}{4(1 + \Phi_0)}
(L \Phi_0)^2
- 
\frac{3}{4(1 + \Phi_0)} (L \Phi_0) (\upmu \underline{L} \Phi_0)
	\label{E:LBARONOUTSIDE} \\
& \ \
		+
		\upmu 
		\frac{\Phi_0}{1 + \Phi_0} 
		\left\lbrace
			1 + \frac{3}{2} \Phi_0
		\right\rbrace
		L \Phi_0
		+
		\frac{\Phi_0}{1 + \Phi_0} 
		\left\lbrace
			1 + \frac{3}{2} \Phi_0
		\right\rbrace
		(\upmu \underline{L} \Phi_0),
	\notag
	\\
L \upmu
	& = \frac{1}{4(1 + \Phi_0)} \upmu L \Phi_0
		+
		\frac{1}{4(1 + \Phi_0)} (\upmu \underline{L} \Phi_0).
		\label{E:UPMUEVOLUTION}
\end{align}
\end{subequations}

For convenience, we will prove shock formation only for a restricted class of
initial data supported in $\Sigma_0^1$;
as can easily be inferred from our proof,
the shock-forming solutions that we will construct are stable under plane symmetric perturbations,
and our approach could be applied to a much larger set of plane symmetric initial data.
Specifically, we will prove shock formation for solutions corresponding to initial data such that 
\begin{align} \label{E:SHOCKDATA}
	\sup_{\Sigma_0^1} |\Phi_0| 
	& \leq \epsilon,
	&&
	L \Phi_0|_{\Sigma_0} = 0,
	&
	\sup_{\Sigma_0^1} |\underline{L} \Phi_0|
	& = 4,
\end{align}
such that $\underline{L} \Phi_0|_{\Sigma_0^1}$ is \emph{negative} at some maximum of $|\underline{L} \Phi_0|$ on $\Sigma_0^1$,
and such that $\epsilon$ is small. The negativity of $\underline{L} \Phi_0$ will drive the vanishing of $\upmu$.
To show the existence of such data, it is convenient to refer to the Cartesian coordinate $x^1$.
Specifically, we fix a smooth non-trivial function $f = f(x^1)$ supported in $\Sigma_0^1$
and set
$\Phi_0|_{\Sigma_0^1}(x^1) := \upkappa f(\uplambda x^1)$,
where $\upkappa$ and $\uplambda$ are real parameters.
Note that $\partial_1 \Phi_0|_{\Sigma_0}(x^1) = \upkappa \uplambda f'(\uplambda x^1)$.
We then set 
$\partial_t \Phi_0|_{\Sigma_0^1}
:= 
- \frac{1}{\sqrt{1 + \Phi_0|_{\Sigma_0^1}}} \partial_1 \Phi_0|_{\Sigma_0^1}
$, which implies that 
$L \Phi_0|_{\Sigma_0^1} = 0$
and
$\underline{L} \Phi_0|_{\Sigma_0^1}(x^1) = - 2 \upkappa \uplambda \frac{1}{\sqrt{1 + \upkappa f(\uplambda x^1)}} f'(\uplambda x^1)$.
We now choose $|\upkappa|$ sufficiently small and $\uplambda$
sufficiently large, which allows us to achieve \eqref{E:SHOCKDATA} with $\epsilon > 0$ arbitrarily small.
Moreover, by adjusting the sign of $\upkappa$, we can ensure that
$\underline{L} \Phi_0|_{\Sigma_0^1}$ is negative at some maximum of $|\underline{L} \Phi_0|$ on $\Sigma_0^1$.
We also note that from domain of dependence considerations, 
it follows that in terms of the geometric coordinates,
solutions with data supported in $\Sigma_0^1$ 
vanish when $u \leq 0$, and that the level set 
$\lbrace u = 0 \rbrace$ can be described in Cartesian coordinates
as $\lbrace (t,x^1) \ | \ 1 - x^1 + t = 0 \rbrace$.

To derive estimates, we make the following
bootstrap assumptions on any region of classical existence such that
$0 \leq t \leq 2$ and $0 \leq u \leq 1$:
\begin{align} \label{E:INTROSHOCKBA}
0 < \upmu \leq 3,
\qquad
|\Phi_0| \leq \sqrt{\epsilon},
\qquad
|L \Phi_0| \leq \sqrt{\epsilon},
\qquad
|\underline{L} \Phi_0| \leq 5.
\end{align}
Note also that the solution verifies $\Phi_0(t,u=0) = 0$
and that the assumptions \eqref{E:INTROSHOCKBA} are consistent 
with the initial data when $\epsilon$ is small.

We now derive estimates. We define
\begin{align} \label{E:INTROSHOCKGOODDERIVATIVES}
	Q(t,u) 
	& := \sup_{(t',u') \in [0,t] \times [0,u]} \left\lbrace |\Phi_0|(t',u') + |L \Phi_0|(t',u') \right\rbrace.
\end{align}
Note that $Q(0,u) \lesssim \epsilon$, 
while our data support assumptions imply that $Q(t,0) = 0$.
Using the evolution equation \eqref{E:LBARONOUTSIDE}, the bootstrap assumptions,
the fact that $L = \frac{d}{dt}$ along the integral curves of $L$,
and the fact that $\upmu \underline{L} = 2 \frac{d}{du}$ along the integral curves of
$\upmu \underline{L}$,
we deduce 
$Q(t,u)\leq C Q(0,u) + c \int_{t'=0}^t Q(t',u) \, dt' + c \int_{u'=0}^u Q(t,u') \, du'$.
From this estimate and Gronwall's inequality (in two variables), we deduce that there are constants $C > 0$ and $c' > c$
such that for $0 \leq t \leq 2$ and $0 \leq u \leq 1$, we have
\begin{align} \label{E:APRIORIESTIMATEINTROSHOCKGOODDERIVATIVES}
	Q(t,u) \leq  C Q(0,u) e^{c't} e^{c'u} \leq C Q(0,u) e^{3c'} \leq C e^{3c'} \epsilon \lesssim \epsilon.
\end{align}
Using the estimate \eqref{E:APRIORIESTIMATEINTROSHOCKGOODDERIVATIVES} 
and the bootstrap assumptions for
$\upmu$
and $\upmu \underline{L} \Phi_0$
to control the terms on RHS~\eqref{E:LONOUTSIDE},
we deduce
$|L (\upmu \underline{L} \Phi_0)| \lesssim \epsilon$.
Integrating this estimate along the integral curves of $L$
and using that $\upmu(0,u) = 1 + \mathcal{O}(\epsilon)$,
we find that for $0 \leq t \leq 2$ and $0 \leq u \leq 1$, we have
$\upmu \underline{L} \Phi_0(t,u) = \upmu \underline{L} \Phi_0(0,u) + \mathcal{O}(\epsilon)
= \underline{L} \Phi_0(0,u) + \mathcal{O}(\epsilon)$.
Inserting this information into \eqref{E:UPMUEVOLUTION},
we deduce
$L \upmu = \frac{1}{4} \underline{L} \Phi_0(0,u) + \mathcal{O}(\epsilon)$.
Integrating in time and using the initial condition 
$\upmu(0,u) = 1 + \mathcal{O}(\epsilon)$, we deduce that
$\upmu(t,u) = 1 + \frac{1}{4} \underline{L} \Phi_0(0,u) t + \mathcal{O}(\epsilon)
= 1 + \frac{1}{4} \upmu \underline{L} \Phi_0(t,u) t + \mathcal{O}(\epsilon)
$.
We now note that if $\epsilon$ is sufficiently small, then the above estimates
yield strict improvements of the bootstrap assumptions \eqref{E:INTROSHOCKBA}.
By a standard continuity argument in $t$ and $u$, 
this justifies the bootstrap assumptions
and shows that the solution exists 
on regions of the form
$0 \leq t \leq 2$ and $0 \leq u \leq 1$,
as long as $\upmu$ remains positive;
the positivity of $\upmu$ and the above estimates guarantee that
$|\Phi_0| + \max_{\alpha = 0,1} |\partial_{\alpha} \Phi_0|$ is finite. 
Moreover, since (by construction) 
$\sup_{\Sigma_0^1} |\underline{L} \Phi_0| =  4$
and since there is a value $u_* \in (0,1)$ such that
$\underline{L} \Phi_0(0,u_*) = - 4$,
the above estimates for 
$\upmu \underline{L} \Phi_0$
and
$\upmu$ 
guarantee that $\min_{\Sigma_t^u} \upmu = 1 + \mathcal{O}(\epsilon) - t$
and that $\upmu \underline{L} \Phi_0(t,u) \leq - 1$ at points $(t,u)$ with $\upmu(t,u) \leq 1/4$
and $0 \leq t \leq 2$.
It follows that
$\min_{\Sigma_t^1} \upmu$ cannot remain positive for times larger than 
$1 + \mathcal{O}(\epsilon)$
and that $\min_{\Sigma_t^1} \upmu \leq 1/4 \implies \sup_{\Sigma_t^1} |\underline{L} \Phi_0| \geq \frac{1}{\min_{\Sigma_t^1} \upmu}$.
This implies that $\sup_{\Sigma_t^1} |\underline{L} \Phi_0|$ blows up 
at some time $t_{(Shock)} = 1 + \mathcal{O}(\epsilon)$
while $|\Phi_0|$ and $|L \Phi_0|$ remain uniformly bounded by $\lesssim \epsilon$.
We have thus shown that a shock forms.

We now revisit the solutions from our main results under the weight $\weight = \frac{1}{1 + \partial_t \Phi}$.
Notice that for such solutions,
$\Phi_0$ also solves equation \eqref{E:PHI0CLOSEDEQUATION}
but is such that such that $|\Phi_0|$ blows up at the singularity.
This is \emph{different blowup behavior} compared to the shock-forming solutions
to equation \eqref{E:PHI0CLOSEDEQUATION}
constructed above, in which
$|\Phi_0|$ remained bounded.
Notice also that our main theorem requires, roughly, that $\Phi_0|_{\Sigma_0}$
should not be too small, which is in contrast to the initial data
for the shock-forming formation solutions described above.
To close this subsection, we clarify that it could be, in principle, 
that the ODE-type blowup solutions that we have constructed
are \emph{unstable} when viewed as solutions to equation \eqref{E:PHI0CLOSEDEQUATION},
even though they are stable solutions of the 
original wave equation \eqref{E:WAVE}. The key point is that
to solve \eqref{E:PHI0CLOSEDEQUATION} (viewed as a wave equation for $\Phi_0$), 
we need to prescribe the data functions
$\Phi_0|_{\Sigma_0}$
and
$\partial_t \Phi_0|_{\Sigma_0}$,
whereas for the ODE-type blowup solutions we have constructed,
we can freely prescribe (in plane symmetry) only $\Phi_0|_{\Sigma_0}$;
the quantity $\partial_t \Phi_0|_{\Sigma_0}$ is not ``free,'' but rather
is uniquely determined from $\Phi_0|_{\Sigma_0}$
via the wave equation \eqref{E:WAVE}.
Put differently, 
the ODE-type blowup solutions that we have constructed yield
``special'' solutions to equation \eqref{E:PHI0CLOSEDEQUATION}
that are constrained by the fact that 
$\Phi_0$ is the time derivative of a solution
to the original wave equation \eqref{E:WAVE}.
In contrast, we note that we expect that the methods of \cite{jSgHjLwW2016} could be used 
to show that the plane symmetric shock-forming
solutions to \eqref{E:PHI0CLOSEDEQUATION} that we constructed above
are stable under perturbations that break the plane symmetry.

\subsection{Notation}
\label{SS:NOTATION}
In this subsection, we summarize some notation that we use throughout.

\begin{itemize}
	\item $\lbrace x^{\alpha} \rbrace_{\alpha=0,1,2,3}$ denotes
		the standard Cartesian coordinates on $\mathbb{R}^{1+3} = \mathbb{R} \times \mathbb{R}^3$
		and
			$
	\displaystyle
	\partial_{\alpha} 
	:=
	\frac{\partial}{\partial x^{\alpha}}
	$
	denotes the corresponding coordinate partial derivative vectorfields.
		$x^0 \in \mathbb{R}$ is the time coordinate and $\underline{x} := (x^1,x^2,x^3) \in \mathbb{R}^3$
		are the spatial coordinates.
	\item We often use the alternate notation $x^0 = t$ and $\partial_0 = \partial_t$.
	\item $\Sigma_t := \lbrace (t,\underline{x}) \ | \ \underline{x} \in \mathbb{R}^3 \rbrace$
		is the standard flat hypersurface of constant time.
	\item Greek ``spacetime'' indices such as $\alpha$ vary over $0,1,2,3$ and
		Latin ``spatial'' indices such as $a$ vary over $1,2,3$.
		We use primed indices, such as $a'$, in the same way that we use their non-primed counterparts.
		We use Einstein's summation convention in that repeated indices are summed
		over their respective ranges.
	\item We sometimes omit the arguments of functions appearing in pointwise inequalities. For example,
		we sometimes write $|f| \leq C \mathring{\upepsilon}$
		instead of $|f(t,\underline{x})| \leq C \mathring{\upepsilon}$.
	\item $\nabla^k \Psi$ denotes the array comprising all $k^{th}-$order
		derivatives of $\Psi$ with respect to the Cartesian spatial coordinate vector fields.
		We often use the alternate notation $\nabla \Psi$ in place of $\nabla^1 \Psi$.
		For example, $\nabla^1 \Psi = \nabla \Psi := (\partial_1 \Psi, \partial_2 \Psi, \partial_3 \Psi)$.
	\item 
	$|\nabla^{\leq k} \Psi| 
	:= \sum_{k'=0}^k |\nabla^{k'} \Psi|
	$.
	\item 
		$|\nabla^{[a,b]} \Psi| 
		:= \sum_{k'=a}^b |\nabla^{k'} \Psi|
		$.
	\item $H^N(\Sigma_t)$ denotes the standard Sobolev space of functions on $\Sigma_t$
	with corresponding norm 
	\[
	\displaystyle
	\| f \|_{H^N(\Sigma_t)}
	:= 
	\left\lbrace
		\sum_{a_1 + a_2 + a_3 \leq N}
		\int_{\underline{x} \in \mathbb{R}^3}
			|\partial_1^{a_1} \partial_2^{a_2} \partial_3^{a_3}f(t,\underline{x})|^2
		\, d \underline{x}
	\right\rbrace^{1/2}.
	\]
	In the case $N=0$, we use the notation ``$L^2$'' in place of ``$H^0$.''
	\item 
	$L^{\infty}(\Sigma_t)$ denotes the standard Lebesgue space of functions on $\Sigma_t$
	with corresponding norm 
	$
	\displaystyle
	\| f \|_{L^{\infty}(\Sigma_t)}
	:= 
	\mbox{\upshape ess sup}_{\underline{x} \in \mathbb{R}^3}
	|f(t,\underline{x})|
	$.
	\item Above and throughout, $d \underline{x} = d x^1 dx^2 dx^3$ is the standard Euclidean integration measure on $\Sigma_t$.
	\item If $A$ and $B$ are two quantities, then we often write 
		$A \lesssim B$
		to indicate that ``there exists a constant $C > 0$ such that $A \leq C B$.''
	\item We sometimes write $\mathcal{O}(B)$ to denote a quantity $A$ 
	with the following property: there exists a constant $C > 0$ such that $|A| \leq C |B|$.
	\item Explicit and implicit constants
	are allowed to depend on the data-size parameters
	$\mathring{A}$
	and 
	$\mathring{A}_*^{-1}$
	from Subsect.\ \ref{SS:DATAASSUMPTIONS},
	in a manner that we more fully explain in Subsect.\ \ref{SS:CONVENTIONSFORCONSTANTS}.
\end{itemize}

\section{Mathematical setup and the evolution equations}
\label{S:MATHEMATICALSETUP}
In this section, we state our assumptions on the nonlinearities,
define the quantities that we will study in the rest of the paper,
and derive evolution equations.

\subsection{Assumptions on the weight}
\label{SS:WEIGHTASSUMPTIONS}
Let $\weight$ be the scalar function from equation \eqref{E:WAVE}.
We assume that there are constants $C_k > 0$ such that
\begin{align}
	\weight(y)
	& > 0,
	&& y \in (-1/2,\infty),
		\label{E:WEIGHTISPOSITIVE} \\
	\weight(0)
	& = 1,
	&&	
		\label{E:WOFZEROISONE} \\
	\weight'(y)
	& \leq 0,
	&& y \in [0,\infty),
		\label{E:WEIGHTNEGATIVEDERIVATIVE} \\
	\left|
		\left\lbrace
			(1 + y)^2
			\frac{d}{dy}
		\right\rbrace^k
		\left[
			(1 + y) \weight(y)
		\right]
	\right|
	& \leq C_k,
	\qquad
	0 \leq k \leq 5,
	&& y \in (-1/2,\infty).
	\label{E:ESTIMATEFORDERIVATIVESOFWEIGHT}
\end{align}

We also assume that there is a constant $\upalpha > 0$ such that
\begin{align} \label{E:WEIGHTVSWEIGHTDERIVATIVECOMPARISON}
	\weight(y)
	& \leq \upalpha 
	\left|
		\weight'(y)
	\right|^{1/2},
	&& y \in [1,\infty).
\end{align}

Note that 
\eqref{E:WEIGHTISPOSITIVE},
\eqref{E:WEIGHTNEGATIVEDERIVATIVE},
and \eqref{E:WEIGHTVSWEIGHTDERIVATIVECOMPARISON} imply in particular that
\begin{align} \label{E:WEIGHTPRIMEISNEGATIVE}
\weight'(y)
	& < 0,
	&& y \in [1,\infty).
\end{align}

\subsection{The integrating factor and the renormalized solution variables}
\label{SS:INTEGRATINGFACTOR}

\subsubsection{Definitions}
\label{SSS:DEFINITIONSOFINTEGRATINGFACTORANDRENORMALIZEDVARIABLES}
As we described in Subsubsect.\ \ref{SSS:DISCUSSIONOFTHEPROOF},
our analysis fundamentally relies on the following integrating factor.

\begin{definition}[\textbf{The integrating factor}]
\label{D:INTEGRATINGFACTOR}
Let $\Phi$ be the solution to the wave equation \eqref{E:WAVE}.
We define $\Ifact = \Ifact(t,\underline{x})$
to be the solution to the following transport equation:
\begin{align} \label{E:INTEGRATINFACTORODEANDIC}
	\partial_t \Ifact
	& = 
	- \Ifact \partial_t \Phi,
	&& \Ifact|_{\Sigma_0} = 1.
\end{align}

Moreover, we define
\begin{align} \label{E:IFACTMIN}
	\Ifact_{\star}(t) 
	& := \min_{\Sigma_t} \Ifact.
\end{align}
\end{definition}

\begin{remark}[\textbf{The vanishing of} $\Ifact$ \textbf{implies singularity formation}]
	\label{R:VANISHINGOFINTEGRATINGFACTORISSINGULARITY}
	It is straightforward to see from \eqref{E:INTEGRATINFACTORODEANDIC}
	that if $\Ifact(T,\underline{x}) = 0$ for some $T > 0$ and for one or more $\underline{x} \in \mathbb{R}^3$, then
	at such values of $\underline{x}$, we have
	$\lim_{t \uparrow T} \sup_{s \in [0,t)} \partial_t \Phi(s,\underline{x}) = \infty$.
	In fact, it follows that $\int_{s=0}^t |\partial_t \Phi(s,\underline{x})| \, ds = \infty$.
\end{remark}

Most of our effort will go towards analyzing the following ``renormalized''
solution variables. We will show that they remain regular up to the singularity.

\begin{definition}[\textbf{Renormalized solution variables}]
\label{D:RENORMALIZEDSOLUTION}
Let $\Phi$ be the solution to the wave equation \eqref{E:WAVE}
and let $\Ifact$ be as in Def.~\ref{D:INTEGRATINGFACTOR}.
For $\alpha = 0,1,2,3$, we define
\begin{align} \label{E:PSIDEF}
	\Psi_{\alpha}
	& := \Ifact \partial_{\alpha} \Phi.
\end{align}
\end{definition}

\subsubsection{A crucial identity for $\Ifact$ and the $\Ifact$-weighted evolution equations}
\label{SSS:EVOLUTIONEQUATIONS}
Our main goal in this subsubsection is to derive evolution equations
for the renormalized solution variables; see Prop.~\ref{P:RENORMALIZEDEVOLUTOINEQUATIONS}.
As a preliminary step, we first provide a lemma
that shows that
$\partial_i \Ifact$ can be controlled in terms $\Psi_i$ and the initial data,
and that no singular factors of $\Ifact^{-1}$ appear in the relationship.
Though simple, the lemma is crucial for the top-order regularity theory of $\Ifact$.

\begin{lemma}[\textbf{Identity for the spatial derivatives of the integrating factor}]
The following identity holds for $i=1,2,3$:
\begin{align} \label{E:FORMULAFORDERIVATIVESOFINTEGRATINGFACTOR}
	\partial_i \Ifact
	& = - \Psi_i 
		+ 
		\Ifact \mathring{\Psi}_i.
\end{align}
\end{lemma}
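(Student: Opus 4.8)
The plan is to derive the identity \eqref{E:FORMULAFORDERIVATIVESOFINTEGRATINGFACTOR} directly by differentiating the defining transport equation \eqref{E:INTEGRATINFACTORODEANDIC} for $\Ifact$ in the spatial directions and recognizing that $\partial_i \Ifact$ solves a linear transport equation that can be integrated explicitly. First I would fix $i \in \{1,2,3\}$ and apply $\partial_i$ to both sides of $\partial_t \Ifact = - \Ifact \partial_t \Phi$. Since $\partial_i$ and $\partial_t$ commute, this yields $\partial_t(\partial_i \Ifact) = - (\partial_i \Ifact)(\partial_t \Phi) - \Ifact \partial_t(\partial_i \Phi)$. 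The key observation is that the inhomogeneous term $- \Ifact \partial_t(\partial_i \Phi)$ is, by the product rule and \eqref{E:INTEGRATINFACTORODEANDIC} again, exactly $\partial_t(-\Psi_i)$ plus a compensating term: indeed $\partial_t \Psi_i = \partial_t(\Ifact \partial_i \Phi) = (\partial_t \Ifact)(\partial_i \Phi) + \Ifact \partial_t(\partial_i \Phi) = - (\partial_t \Phi) \Psi_i + \Ifact \partial_t(\partial_i \Phi)$.

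The cleanest route is therefore to introduce the quantity $\mathcal{Q}_i := \partial_i \Ifact + \Psi_i$ and show it is constant in time along $\Sigma_t$ for each fixed $\underline{x}$. Computing $\partial_t \mathcal{Q}_i = \partial_t(\partial_i \Ifact) + \partial_t \Psi_i$ and substituting the two expressions above, the terms $- \Ifact \partial_t(\partial_i \Phi)$ and $+ \Ifact \partial_t(\partial_i \Phi)$ cancel, leaving $\partial_t \mathcal{Q}_i = - (\partial_i \Ifact)(\partial_t \Phi) - (\partial_t \Phi) \Psi_i = - (\partial_t \Phi)\, \mathcal{Q}_i$. Thus $\mathcal{Q}_i$ satisfies the same homogeneous linear transport equation \eqref{E:INTEGRATINFACTORODEANDIC} as $\Ifact$ itself. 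Rather than invoking an ODE uniqueness argument, it is even simpler to note that $\partial_t(\Ifact^{-1} \mathcal{Q}_i) = - \Ifact^{-2}(\partial_t \Ifact)\mathcal{Q}_i + \Ifact^{-1} \partial_t \mathcal{Q}_i = \Ifact^{-1}(\partial_t \Phi)\mathcal{Q}_i - \Ifact^{-1}(\partial_t \Phi)\mathcal{Q}_i = 0$, so $\Ifact^{-1}\mathcal{Q}_i$ is constant in $t$; alternatively, and more robustly since it avoids dividing by $\Ifact$ (which could in principle vanish), one integrates the linear transport equation for $\mathcal{Q}_i$ directly via the integrating factor $\exp(\int_0^t \partial_t \Phi \, ds)$, which is exactly $\Ifact^{-1}$ up to the normalization $\Ifact|_{\Sigma_0} = 1$.

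Finally I would evaluate the constant. At $t = 0$ we have $\Ifact|_{\Sigma_0} = 1$ and $\Psi_i|_{\Sigma_0} = \Ifact|_{\Sigma_0}\, \partial_i \Phi|_{\Sigma_0} = \mathring{\Psi}_i$ by Def.~\ref{D:RENORMALIZEDSOLUTION} and \eqref{E:DATAWAVE}. Hence $(\Ifact^{-1}\mathcal{Q}_i)|_{\Sigma_0} = \mathcal{Q}_i|_{\Sigma_0} = \partial_i \Ifact|_{\Sigma_0} + \mathring{\Psi}_i$. Since $\Ifact|_{\Sigma_0} \equiv 1$ is constant on $\Sigma_0$, we have $\partial_i \Ifact|_{\Sigma_0} = 0$, so the constant equals $\mathring{\Psi}_i$. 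Therefore $\Ifact^{-1}(\partial_i \Ifact + \Psi_i) = \mathring{\Psi}_i$, i.e. $\partial_i \Ifact = - \Psi_i + \Ifact \mathring{\Psi}_i$, which is \eqref{E:FORMULAFORDERIVATIVESOFINTEGRATINGFACTOR}. There is no real obstacle here — the only point requiring a bit of care is to present the integration so that no factor of $\Ifact^{-1}$ is genuinely needed (this is the whole point of the lemma, as emphasized in the remark preceding it), which is handled by integrating the transport equation for $\mathcal{Q}_i$ along the integral curves of $\partial_t$ with the bona fide integrating factor rather than by manipulating $\Ifact^{-1}\mathcal{Q}_i$.
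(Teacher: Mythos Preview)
Your proof is correct and follows essentially the same approach as the paper: both show that $\partial_t\bigl(\Ifact^{-1}(\partial_i \Ifact + \Psi_i)\bigr) = 0$ and then evaluate the time-constant quantity using the initial conditions $\Ifact|_{\Sigma_0}=1$, $\Psi_i|_{\Sigma_0}=\mathring{\Psi}_i$. The paper reaches this identity in one stroke by dividing \eqref{E:INTEGRATINFACTORODEANDIC} by $\Ifact$ and applying $\partial_i$, whereas you arrive at it by first deriving the transport equation $\partial_t \mathcal{Q}_i = -(\partial_t \Phi)\mathcal{Q}_i$ for $\mathcal{Q}_i = \partial_i \Ifact + \Psi_i$; the content is identical.
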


\begin{proof}	
	Dividing equation \eqref{E:INTEGRATINFACTORODEANDIC}
	by $\Ifact$ and then applying $\partial_i$,
	we compute that
	\begin{align} \label{E:CRUCIALEVOLUTIONEQUATIONFORINTEGRATINGFACTORSPATIALDERIVATIVES}
	\partial_t
	\left\lbrace
		\frac{\partial_i \Ifact + \Psi_i}{\Ifact}
	\right\rbrace
	& = 0.
\end{align}
Integrating \eqref{E:CRUCIALEVOLUTIONEQUATIONFORINTEGRATINGFACTORSPATIALDERIVATIVES} with respect
to time and using the initial conditions $\Ifact|_{\Sigma_0} = 1$ and $\Psi_i|_{\Sigma_0} = \mathring{\Psi}_i$,
we arrive at \eqref{E:FORMULAFORDERIVATIVESOFINTEGRATINGFACTOR}.

\end{proof}

We now derive the main evolution equations that we will study in the remainder of the paper.

\begin{proposition}[\textbf{$\Ifact$-weighted evolution equations}]
	\label{P:RENORMALIZEDEVOLUTOINEQUATIONS}
	For solutions to the wave equation \eqref{E:WAVE},
	the renormalized solution variables of Def.~\ref{D:RENORMALIZEDSOLUTION}
	verify the following system:
	\begin{subequations}
	\begin{align} 
	\partial_t \Psi_0
	& = 
		\weight(\Ifact^{-1} \Psi_0) \sum_{a=1}^3 \partial_a \Psi_a
		+
		\Ifact^{-1} \weight(\Ifact^{-1} \Psi_0) \sum_{a=1}^3 \Psi_a \Psi_a
		-
		\weight(\Ifact^{-1} \Psi_0) \sum_{a=1}^3 \mathring{\Psi}_a \Psi_a,
		\label{E:PARTALTPSI0EVOLUTION} \\
	\partial_t \Psi_i
	& 
	= \partial_i \Psi_0
	- \mathring{\Psi}_i \Psi_0.
	\label{E:PARTALTPSIIEVOLUTION}
	\end{align}
	\end{subequations}
	
\end{proposition}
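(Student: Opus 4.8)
The plan is to derive both evolution equations \eqref{E:PARTALTPSI0EVOLUTION} and \eqref{E:PARTALTPSIIEVOLUTION} directly by differentiating the definition $\Psi_\alpha := \Ifact \partial_\alpha \Phi$ and substituting in the wave equation \eqref{E:WAVE}, the transport equation \eqref{E:INTEGRATINFACTORODEANDIC} for $\Ifact$, and the identity \eqref{E:FORMULAFORDERIVATIVESOFINTEGRATINGFACTOR} from the preceding lemma. The key algebraic observation is that the factor of $\Ifact$ multiplying each $\partial_\alpha \Phi$ is designed precisely to cancel the Riccati-type term $-(\partial_t\Phi)^2$ on the right-hand side of \eqref{E:WAVE}.

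First I would treat \eqref{E:PARTALTPSIIEVOLUTION}, which is the simpler of the two. Apply $\partial_t$ to $\Psi_i = \Ifact \partial_i \Phi$ using the Leibniz rule, obtaining $\partial_t \Psi_i = (\partial_t \Ifact) \partial_i \Phi + \Ifact \partial_t \partial_i \Phi$. Using \eqref{E:INTEGRATINFACTORODEANDIC} to rewrite $\partial_t \Ifact = -\Ifact \partial_t \Phi$, the first term becomes $-\Ifact (\partial_t \Phi)(\partial_i \Phi)$. For the second term, commute the derivatives and write $\Ifact \partial_i \partial_t \Phi = \partial_i(\Ifact \partial_t \Phi) - (\partial_i \Ifact)(\partial_t \Phi) = \partial_i \Psi_0 - (\partial_i \Ifact) \partial_t \Phi$. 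Now substitute \eqref{E:FORMULAFORDERIVATIVESOFINTEGRATINGFACTOR} for $\partial_i \Ifact = -\Psi_i + \Ifact \mathring{\Psi}_i$; the term $-(\partial_i \Ifact)\partial_t \Phi = (\Psi_i - \Ifact \mathring{\Psi}_i)\partial_t \Phi = \Ifact(\partial_i \Phi)(\partial_t\Phi) - \mathring{\Psi}_i \Ifact \partial_t \Phi$. Adding this to the first term $-\Ifact(\partial_t\Phi)(\partial_i\Phi)$, the quadratic pieces cancel exactly, leaving $\partial_t\Psi_i = \partial_i\Psi_0 - \mathring{\Psi}_i \Psi_0$, which is \eqref{E:PARTALTPSIIEVOLUTION}.

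Next I would handle \eqref{E:PARTALTPSI0EVOLUTION}. Apply $\partial_t$ to $\Psi_0 = \Ifact \partial_t \Phi$, giving $\partial_t \Psi_0 = (\partial_t\Ifact)\partial_t\Phi + \Ifact \partial_t^2 \Phi = -\Ifact(\partial_t\Phi)^2 + \Ifact \partial_t^2\Phi$. Now use the wave equation \eqref{E:WAVE} in the form $\partial_t^2 \Phi = \weight(\partial_t\Phi)\Delta\Phi + (\partial_t\Phi)^2$; the term $\Ifact(\partial_t\Phi)^2$ cancels against $-\Ifact(\partial_t\Phi)^2$, leaving $\partial_t\Psi_0 = \Ifact \weight(\partial_t\Phi)\Delta\Phi$. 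Then rewrite $\partial_t\Phi = \Ifact^{-1}\Psi_0$ inside $\weight$, and expand $\Ifact \Delta\Phi = \Ifact \sum_a \partial_a^2\Phi = \sum_a \partial_a(\Ifact\partial_a\Phi) - \sum_a (\partial_a\Ifact)(\partial_a\Phi) = \sum_a \partial_a\Psi_a - \sum_a (\partial_a\Ifact)\partial_a\Phi$. Substituting \eqref{E:FORMULAFORDERIVATIVESOFINTEGRATINGFACTOR} once more, $-(\partial_a\Ifact)\partial_a\Phi = (\Psi_a - \Ifact\mathring{\Psi}_a)\partial_a\Phi = \Ifact^{-1}\Psi_a\Psi_a - \mathring{\Psi}_a\Psi_a$, where I used $\partial_a\Phi = \Ifact^{-1}\Psi_a$. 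Collecting terms and factoring out $\weight(\Ifact^{-1}\Psi_0)$ yields exactly \eqref{E:PARTALTPSI0EVOLUTION}.

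There is no genuine obstacle here — the proposition is a direct computation — but the one point requiring care is the repeated appearance and eventual cancellation of the factors $\Ifact^{-1}$ and $\Ifact$: the identity \eqref{E:FORMULAFORDERIVATIVESOFINTEGRATINGFACTOR} is what guarantees that the spatial derivatives of $\Ifact$ can be traded for the renormalized variables without introducing uncontrolled $\Ifact^{-1}$ weights beyond the single benign factor $\Ifact^{-1}\weight(\Ifact^{-1}\Psi_0)$ in the quadratic term of \eqref{E:PARTALTPSI0EVOLUTION}, which (as explained in the introduction) is tamed by the decay assumptions on $\weight$. I would also note that $\Ifact > 0$ on the region under consideration, so all the manipulations involving $\Ifact^{-1}$ are legitimate there.
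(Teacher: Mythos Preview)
Your proof is correct and follows essentially the same approach as the paper: both derive the equations by differentiating $\Psi_\alpha = \Ifact\partial_\alpha\Phi$, invoking the transport equation \eqref{E:INTEGRATINFACTORODEANDIC} and the wave equation \eqref{E:WAVE}, and using the identity \eqref{E:FORMULAFORDERIVATIVESOFINTEGRATINGFACTOR} to eliminate $\partial_a\Ifact$. The only cosmetic difference is that the paper organizes the computation for \eqref{E:PARTALTPSIIEVOLUTION} via $\partial_t\ln\Ifact$ and $\partial_i\ln\Ifact$, whereas you work directly with the Leibniz rule; the substance is identical.
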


\begin{proof}
We first prove \eqref{E:PARTALTPSI0EVOLUTION}.
From equations \eqref{E:WAVE} and \eqref{E:INTEGRATINFACTORODEANDIC},
we deduce 
$\partial_t(\Ifact \partial_t \Phi) 
= \Ifact \weight(\partial_t \Phi) \Delta \Phi
= \weight(\partial_t \Phi) \sum_{a=1}^3 \partial_a (\Ifact \partial_a \Phi)
	- 
	\weight(\partial_t \Phi) \sum_{a=1}^3 (\partial_a \Ifact) \partial_a \Phi
$.
Using equation \eqref{E:FORMULAFORDERIVATIVESOFINTEGRATINGFACTOR} to substitute for 
$\partial_a \Ifact$ and appealing to Def.~\eqref{D:RENORMALIZEDSOLUTION},
we arrive at the desired equation \eqref{E:PARTALTPSI0EVOLUTION}.

To prove \eqref{E:PARTALTPSIIEVOLUTION}, we first use
Def.~\eqref{D:RENORMALIZEDSOLUTION} and the symmetry property $\partial_t \partial_i \Phi =  \partial_i \partial_t \Phi$ 
to obtain
$\partial_t \Psi_i = (\partial_t \ln \Ifact) \Psi_i + \partial_i \Psi_0 - (\partial_i \ln \Ifact) \Psi_0$.
Using \eqref{E:INTEGRATINFACTORODEANDIC} to replace 
$\partial_t \ln \Ifact$ with $- \Ifact^{-1} \Psi_0$
and equation \eqref{E:FORMULAFORDERIVATIVESOFINTEGRATINGFACTOR}
to replace
$
- \partial_i \ln \Ifact
$
with 
$\Ifact^{-1} \Psi_i - \mathring{\Psi}_i$,
we conclude \eqref{E:PARTALTPSIIEVOLUTION}.


\end{proof}

\section{Assumptions on the initial data and bootstrap assumptions}
\label{S:DATAANDBOOTSTRAP}
In this section, we state our size assumptions on the data
$
	(\partial_t \Phi|_{\Sigma_0},\partial_1 \Phi|_{\Sigma_0}, \partial_2 \Phi|_{\Sigma_0}, \partial_3 \Phi|_{\Sigma_0}) 
	= (\mathring{\Psi}_0,\mathring{\Psi}_1,\mathring{\Psi}_2,\mathring{\Psi}_3),
$
for the wave equation \eqref{E:WAVE}
and formulate bootstrap assumptions that are convenient
for studying the solution. We also precisely describe the smallness assumptions
that we need to close our estimates and show the existence of initial data
that verify the smallness assumptions.

\subsection{Assumptions on the data}
\label{SS:DATAASSUMPTIONS}
We assume that the initial data 
are compactly supported 
and verify the following size assumptions
for $i=1,2,3$:
\begin{subequations}
\begin{align} \label{E:DATASIZE}
	&
	\| \nabla^{\leq 2} \mathring{\Psi}_i \|_{L^{\infty}(\Sigma_0)}
	+
	\| \nabla^{[1,3]} \mathring{\Psi}_0 \|_{L^{\infty}(\Sigma_0)}
		\\
& \ \
	+
	\| \mathring{\Psi}_i \|_{H^5(\Sigma_0)}
	+
	\mathring{\upepsilon}^{3/2} \| \nabla \mathring{\Psi}_0 \|_{L^2(\Sigma_0)}
	+
	\| \nabla^2 \mathring{\Psi}_0 \|_{H^3(\Sigma_0)}
	\leq \mathring{\upepsilon},
	\notag
\end{align}
\begin{align}
\| \mathring{\Psi}_0 \|_{L^{\infty}(\Sigma_0)}
& \leq \mathring{A},
\label{E:LARGEDATASIZE}
\end{align}
\begin{align}
 - \frac{1}{4} 
  & \leq \min_{\Sigma_0} \mathring{\Psi}_0,
	\label{E:PSI0NOTTOONEGATIVE}
\end{align}	
\end{subequations}
where $\mathring{\upepsilon} > 0$ 
and $\mathring{A} > 0$
are two data-size parameters that 
we will discuss below
(roughly, $\mathring{\upepsilon}$ will have to be small for our proofs to close).
Roughly, in our analysis, we will propagate the above
size assumptions during the solution's classical lifespan.
A possible exception can occur for the top-order spatial derivatives of $\Psi_i$, 
which we are are not able to control uniformly in the norm $\| \cdot \|_{L^2(\Sigma_t)}$
due to the presence of the weight $\weight$ in our energy,
which can go to $0$ as the singularity forms
(see Def.~\ref{D:L2CONTROLLINGQUANTITY}).

We now introduce the 
crucial parameter $\mathring{A}_*$ that 
controls the time of first blowup;
our analysis shows that for
$\mathring{\upepsilon}$ sufficiently small, the time
of first blowup is 
$\left\lbrace
	1 + \mathcal{O}(\mathring{\upepsilon})
\right\rbrace
\mathring{A}_*^{-1}$;
see also Remark~\ref{R:CRUCIALDELTAPARAMETER}.

\begin{definition}[\textbf{The parameter that controls the time of first blowup}]
	\label{D:CRUCIALDATASIZEPARAMETER}
	We define the data-dependent parameter $\mathring{A}_*$ as follows:
	\begin{align} \label{E:CRUCIALDATASIZEPARAMETER}
		\mathring{A}_*
		& := 
		\max_{\Sigma_0} [\mathring{\Psi}_0]_+,
  \end{align}
	where $[\mathring{\Psi}_0]_+ := \max \lbrace \mathring{\Psi}_0,0 \rbrace$.
\end{definition}
Our main results concern solutions such that
$\mathring{A}_* > 0$, so we will assume in the
rest of the article that this is the case.

\begin{remark}[\textbf{The relevance of $\mathring{A}_*$}]
\label{R:CRUCIALDELTAPARAMETER}
The solutions that we study are such that\footnote{Here ``$A \sim B$'' imprecisely indicates
that $A$ is well-approximated by $B$.}
$
\partial_t \Ifact
= - \Psi_0
$
and
$
\partial_t \Psi_0 \sim 0
$
(throughout the evolution).
Hence, by the fundamental theorem of calculus,
we have
$\Psi_0(t,\underline{x}) \sim \mathring{\Psi}_0(\underline{x})$
and $\Ifact(t,\underline{x}) \sim 1 - t \mathring{\Psi}_0(\underline{x})$. 
From this last expression, we see that
$\Ifact$ is expected to vanish for the first time at approximately
$t = \mathring{A}_*^{-1}$ which, since $\partial_t \Ifact = - \Ifact \partial_t \Phi$,
implies the blowup of $\partial_t \Phi$ (see Remark~\ref{R:VANISHINGOFINTEGRATINGFACTORISSINGULARITY}).
See Lemmas~\ref{L:POINTWISEFORPSIANDPARTIALTPSI} 
and \ref{L:INTEGRATINGFACTORCRUCIALESTIMATES}
for the precise statements.
\end{remark}

\subsection{Bootstrap assumptions}
\label{SS:BOOTSTRAP}
To prove our main results, we find it convenient to rely on
a set of bootstrap assumptions, 
which we provide in this subsection.

\medskip

\noindent \underline{\textbf{The size of} $T_{(Boot)}$}.
We assume that $T_{(Boot)}$
is a bootstrap time with
\begin{align} \label{E:BOOTSTRAPTIME}
	0 < T_{(Boot)} \leq 2 \mathring{A}_*^{-1},
\end{align}
where $\mathring{A} > 0$ is the data-size parameter from Def.~\ref{D:CRUCIALDATASIZEPARAMETER}.
The assumption \eqref{E:BOOTSTRAPTIME} 
gives us a sufficient margin of error
to prove that finite-time blowup occurs
(see Remark~\ref{R:CRUCIALDELTAPARAMETER}).

\medskip

\noindent \underline{\textbf{Blowup has not yet occurred}}.
Recall that for the solutions under study, the vanishing of $\Ifact$
will coincide with the formation of a singularity in $\partial_t \Phi$.
For this reason,
we assume that for $t \in [0,T_{(Boot)})$,
we have
\begin{align} \label{E:HYPERBOLICBOOTSTRAP}
	\Ifact_{\star}(t) > 0,
\end{align}
where $\Ifact_{\star}$ is defined in \eqref{E:IFACTMIN}.

\medskip 
\underline{\textbf{The solution is contained in the regime of hyperbolicity}}.\footnote{In particular, the assumptions of
Subsect.\ \ref{SS:WEIGHTASSUMPTIONS} guarantee that $\weight(\Ifact^{-1} \Psi_0) > 0$
whenever \eqref{E:BOOTSTRAPRATIO} holds.}
We assume that for $(t,\underline{x}) \in [0,T_{(Boot)}) \times \mathbb{R}^3$, we have
\begin{align} \label{E:BOOTSTRAPRATIO}
	\frac{\Psi_0(t,\underline{x})}{\Ifact(t,\underline{x})}
		> - \frac{1}{2}.
\end{align}

\medskip
\noindent \underline{\textbf{Smallness of} $\Ifact$ \textbf{implies largeness of} $\Psi_0$}.
We assume that for $(t,\underline{x}) \in [0,T_{(Boot)}) \times \mathbb{R}^3$,
\begin{align} 	\label{E:BOOTSTRAPSMALLINTFACTIMPMLIESPSI0ISLARGE}
	\Ifact(t,\underline{x}) 
		\leq \frac{1}{8}
		\implies
		\Psi_0(t,\underline{x})
		\geq \frac{1}{8} \mathring{A}_*.
\end{align}

\medskip

\noindent \underline{$L^{\infty}$ \textbf{bootstrap assumptions}}.
We assume that for $t \in [0,T_{(Boot)})$, we have
\begin{subequations}
\begin{align} \label{E:PSI0ITSELFBOOTSTRAP}
	\| \Psi_0 \|_{L^{\infty}(\Sigma_t)}
	& \leq 
		\mathring{A}	
		+ 
		\varepsilon,
			\\
	\| \nabla^{[1,3]} \Psi_0 \|_{L^{\infty}(\Sigma_t)}
	& \leq \varepsilon,
			 \label{E:PSI0DERIVATVESBOOTSTRAP} \\
	\| \nabla^{\leq 2} \Psi_i \|_{L^{\infty}(\Sigma_t)}
	& \leq \varepsilon,
			\label{E:PSIIANDDERIVATIVESBOOTSTRAP} 
			\\
	\| \Ifact \|_{L^{\infty}(\Sigma_t)}
	& \leq 
		1
		+
		2 \mathring{A}_*^{-1} \mathring{A}	
		+ 
		\varepsilon,
			\label{E:IFACTITSELFBOOTSTRAP} 
\end{align}
\end{subequations}
where $\varepsilon > 0$ is a small bootstrap parameter;
we describe our smallness assumptions in the next subsection.

\begin{remark}[\textbf{The solution remains compactly supported in space}]
	\label{R:BOUNDEDWAVESPEED}
		From the bootstrap assumptions 
		and the assumptions of Subsect.\ \ref{SS:WEIGHTASSUMPTIONS} on $\weight$,
		we see that the wave speed
		$\left\lbrace
			\weight(\Ifact^{-1} \Psi_0)
		\right\rbrace^{1/2}$ 
		associated to equation~\eqref{E:WAVE}
		remains uniformly bounded for $(t,\underline{x}) \in [0,T_{(Boot)}) \times \mathbb{R}^3$.
		It follows that there exists a large, data-dependent ball $B \subset \mathbb{R}^3$
		such that $\Psi_{\alpha}(t,\underline{x})$ and $\Ifact - 1$
		vanish for $(t,\underline{x}) \in [0,T_{(Boot)}) \times B^c$.
\end{remark}

\subsection{Smallness assumptions}
\label{SS:SMALLNESSASSUMPTIONS}
For the rest of the article, 
when we say that ``$A$ is small relative to $B$,''
we mean that $B > 0$ and that there exists a continuous increasing function 
$f :(0,\infty) \rightarrow (0,\infty)$ 
such that 
$
\displaystyle
A < f(B)
$.
For brevity, we typically do not 
specify the form of $f$.

In the rest of the article, we make the following
relative smallness assumptions. We
continually adjust the required smallness
in order to close the estimates.
\begin{itemize}
	\item The bootstrap parameter $\varepsilon$ from Subsect.\ \ref{SS:BOOTSTRAP}
		is small relative to $1$ (i.e., in an absolute sense, without regard for the other parameters).
	\item $\varepsilon$ is small relative to $\mathring{A}^{-1}$,
		where $\mathring{A}$ is the data-size parameter 
		from \eqref{E:LARGEDATASIZE}.
	\item $\varepsilon$ is small relative to 
		the data-size parameter $\mathring{A}_*$ 
		from \eqref{E:CRUCIALDATASIZEPARAMETER}.
	\item We assume that
\begin{align} \label{E:DATAEPSILONVSBOOTSTRAPEPSILON}
	\varepsilon^{4/3}
	& \leq
	\mathring{\upepsilon}
	\leq \varepsilon,
\end{align}
where $\mathring{\upepsilon}$ is the data smallness parameter from \eqref{E:DATASIZE}.
\end{itemize}
The first two assumptions will allow us to control error terms that,
roughly speaking, are of size $\varepsilon \mathring{A}^k$ 
for some integer $k \geq 0$. The third assumption 
is relevant because the expected blowup-time is 
approximately $\mathring{A}_*^{-1}$
(see Remark~\ref{R:CRUCIALDELTAPARAMETER});
the assumption will allow us to show that various
error products featuring a small factor $\varepsilon$
remain small for $t \leq 2 \mathring{A}_*^{-1}$, 
which is plenty of time for us to show that $\Ifact$ vanishes
and $\partial_t \Phi$ blows up.
\eqref{E:DATAEPSILONVSBOOTSTRAPEPSILON} is convenient
for closing our bootstrap argument.

\subsection{Existence of initial data verifying the smallness assumptions}
\label{SS:EXISTENCEOFDATA}
It is easy to construct initial data
such that the parameters
$\mathring{\upepsilon}$,
$\mathring{A}$,
and 
$\mathring{A}_*$
satisfy the size assumptions stated in Subsect.\ \ref{SS:SMALLNESSASSUMPTIONS}.
For example, we can start with \emph{any} smooth compactly supported data 
$(\mathring{\Psi}_0,\mathring{\Psi}_1,\mathring{\Psi}_2,\mathring{\Psi}_3)$
such that $\max_{\Sigma_0} \mathring{\Psi}_0 > 0$
and $-\frac{1}{4} \leq \min_{\Sigma_0} \mathring{\Psi}_0$.
We then consider the one-parameter family (for $i=1,2,3$)
\[
\left(
	\leftexp{(\uplambda)}{\mathring{\Psi}_0}(\underline{x}),
	\leftexp{(\uplambda)}{\mathring{\Psi}_i}(\underline{x})
\right)
:=
\left(
	\mathring{\Psi}_0(\uplambda^{-1} \underline{x}),
	\uplambda^{-1} \mathring{\Psi}_i(\underline{x})
\right).
\]
It is straightforward to check that for $\uplambda > 0$ sufficiently large,
all of the size assumptions of
Subsect.\ \ref{SS:SMALLNESSASSUMPTIONS} are satisfied by the rescaled data
(where, roughly speaking, the role of $\mathring{\upepsilon}$ is played by $\uplambda^{-1}$),
as is \eqref{E:PSI0NOTTOONEGATIVE}.
The proof relies on the simple scaling identities
$\nabla^k \leftexp{(\uplambda)}{\mathring{\Psi}_0}(\underline{x})
	= \uplambda^{-k} (\nabla^k \mathring{\Psi}_0)(\uplambda^{-1} \underline{x})
$,
$\nabla^k \leftexp{(\uplambda)}{\mathring{\Psi}_i}(\underline{x})
	= \uplambda^{-1} (\nabla^k \mathring{\Psi}_i)(\underline{x})
$,
$
\left\|
		\nabla^k \leftexp{(\uplambda)}{\mathring{\Psi}_0}
	\right\|_{L^2(\Sigma_0)}
= \uplambda^{3/2 - k}
			\| \mathring{\Psi}_0 \|_{L^2(\Sigma_0)}
$,
and
$
\left\|
		\nabla^k \leftexp{(\uplambda)}{\mathring{\Psi}_i}
	\right\|_{L^2(\Sigma_0)}
	= \uplambda^{-1} \| \mathring{\Psi}_i \|_{L^2(\Sigma_0)}
$.

\begin{remark}[\textbf{Blowup generically occurs for appropriately 
	rescaled non-trivial data}]
	\label{R:GENERICDEGENERACY}
	The discussion in Subsect.\ \ref{SS:EXISTENCEOFDATA}
	can easily be extended to show that
	if $\max_{\Sigma_0} \mathring{\Psi}_0 > 0$
	and
	$-\frac{1}{4} \leq \min_{\Sigma_0} \mathring{\Psi}_0$,
	then
	one \emph{always} generates data to which
	our results apply
	by considering the rescaled data
	$
	\left(
	\leftexp{(\uplambda)}{\mathring{\Psi}_0}(\underline{x}),
	\leftexp{(\uplambda)}{\mathring{\Psi}_i}(\underline{x})
	\right)
	$
	with $\uplambda$ sufficiently large.

\end{remark}

\section{Energy identities}
\label{S:ENERGY}
In this section, we define the energies that we use to control the solution in $L^2$
up to top order. We then derive energy identities.

\subsection{Definitions}
\label{SS:ENERGYDEFINITIONS}
The following energy functional serves as a building block for our energies. 

\begin{definition}[\textbf{Basic energy functional}]
	\label{D:BASICENERGY}
	To any array-valued function 
	$V = V(t,\underline{x}) := (V_0,V_1,V_2,V_3)$,
	we associated the following energy:
	\begin{align} \label{E:BASICENERGY}
	\mathbb{E}[V]
	& = \mathbb{E}[V](t)
		:= \int_{\Sigma_t}
					\left\lbrace
						V_0^2
						+
						\sum_{a=1}^3
						\weight(\Ifact^{-1} \Psi_0)
						V_a^2
					\right\rbrace
			 \, d \underline{x}.
	\end{align}
\end{definition}

We now define $\mathbb{Q}_{(\mathring{\upepsilon})}(t)$, which is the main $L^2$-type
quantity that we use to control the solution up to top order.

\begin{definition} [\textbf{The} $L^2$-\textbf{controlling quantity}]
	\label{D:L2CONTROLLINGQUANTITY}
	Let $\mathring{\upepsilon} > 0$ be the data-size parameter from Subsect.\ \ref{SS:DATAASSUMPTIONS}.
	We define the $L^2$-controlling quantity $\mathbb{Q}_{(\mathring{\upepsilon})}$ as follows:
	\begin{align} \label{E:ENERGYTOCONTROLSOLNS}
		\mathbb{Q}_{(\mathring{\upepsilon})}(t)
		& := 
		\sum_{k=2}^5
		\int_{\Sigma_t}
				\left\lbrace	
					|\nabla^{k} \Psi_0|^2
					+
					\sum_{a=1}^3
					\weight(\Ifact^{-1} \Psi_0)
					|\nabla^{k} \Psi_a|^2
				\right\rbrace
		\, d \underline{x}
			\\
	& \ \
		+
		\sum_{k=1}^4
		\int_{\Sigma_t}
			|\nabla^{k} \Psi_a|^2
		\, d \underline{x}
		+
		\mathring{\upepsilon}^3
		\int_{\Sigma_t}
			\left\lbrace
				|\nabla \Psi_0|^2
				+
				\sum_{a=1}^3
				|\Psi_a|^2
			\right\rbrace
		\, d \underline{x}.
		\notag
\end{align}
\end{definition}

\begin{remark}[\textbf{The} $\mathring{\upepsilon}$ \textbf{weight in the definition of} $\mathbb{Q}_{(\mathring{\upepsilon})}$]
	\label{R:EPSILONWEIGHTSINCONTROLLINGQUANTITY}
	Our main a priori energy estimate
	shows that $\mathbb{Q}_{(\mathring{\upepsilon})}(t) \lesssim \mathring{\upepsilon}^2$
	up to the singularity. 
	The small coefficient of $\mathring{\upepsilon}^3$
	in front of the last integral on RHS~\eqref{E:ENERGYTOCONTROLSOLNS}
	is needed to ensure the $\mathcal{O}(\mathring{\upepsilon}^2)$
	smallness of $\mathbb{Q}_{(\mathring{\upepsilon})}$.
  However, the small coefficient of $\mathring{\upepsilon}^3$
	implies that $\mathbb{Q}_{(\mathring{\upepsilon})}(t)$
	provides only weak $L^2$ control of
	$\nabla \Psi_0$
	and $\Psi_a$,
	i.e., their $L^2$ norms can be as large as
	$\mathcal{O}(\mathring{\upepsilon}^{-1/2})$.
	We clarify that the possible $\mathcal{O}(\mathring{\upepsilon}^{-1/2})$ size of $\nabla \Psi_0$
	is consistent with the construction of initial data described in
	Subsect.\ \ref{SS:EXISTENCEOFDATA}.
	Despite the possible $\mathcal{O}(\mathring{\upepsilon}^{-1/2})$ largeness,
	we will nonetheless be able to show, 
	through a separate argument, 
	the following crucial bounds:
	$\nabla \Psi_0$
	and $\Psi_a$
	are bounded in the norm $\| \cdot \|_{L^{\infty}(\Sigma_t)}$ by $\lesssim \mathring{\upepsilon}$,
	up to the singularity; see Prop.~\ref{P:APRIORIESTIMATES}.
\end{remark}

\subsection{Basic energy identity}
\label{SS:BASICENERGYIDENTITY}
We aim to derive an energy identity for
the controlling quantity $\mathbb{Q}_{(\mathring{\upepsilon})}$ 
defined in \eqref{E:ENERGYTOCONTROLSOLNS}.
As a preliminary step, in this subsection, 
we derive a standard energy identity for the building block energy from \eqref{E:BASICENERGY}.

\begin{lemma}[\textbf{Basic energy identity}]
	\label{L:BASICENERGYID}
	Let $\mathbb{E}[V](t)$ be the building block energy defined in \eqref{E:BASICENERGY}.
	Solutions $V := (V_0,V_1,V_2,V_3)$ to the inhomogeneous linear system
\begin{subequations}
	\begin{align} 
	\partial_t V_0
	& = 
		\sum_{a=1}^3
		\weight(\Ifact^{-1} \Psi_0) \partial_a V_a
		+ 
		F_0,
		 \label{E:LINEARPSI0EVOLUTION} \\
	\partial_t V_i
	& 
	= \partial_i V_0
	+ 
	F_i
	\label{E:LINEARPSIIEVOLUTION}
	\end{align}
	\end{subequations}
	verify the following energy identity:
	\begin{align} \label{E:BASICENERGYID}
		\frac{d}{dt}
		\mathbb{E}[V](t)
		& = 
		\sum_{a=1}^3
		\int_{\Sigma_t}
			(\Ifact^{-1} \Psi_0)^2 \weight'(\Ifact^{-1} \Psi_0)
			(V_a)^2
		\, d \underline{x}
				\\	
		& \ \
				+
				\sum_{a=1}^3
				\sum_{b=1}^3
				\int_{\Sigma_t}
					\Ifact^{-1} \weight'(\Ifact^{-1} \Psi_0)
					\weight(\Ifact^{-1} \Psi_0) 
					\partial_a \Psi_a
					(V_b)^2
				\, d \underline{x}
			\notag \\
	& \ \
		+ 	\sum_{a=1}^3
				\sum_{b=1}^3
				\int_{\Sigma_t}
				\Ifact^{-2}
				\weight'(\Ifact^{-1} \Psi_0)
				\weight(\Ifact^{-1} \Psi_0) 
				(\Psi_a)^2
				(V_b)^2
			\, d \underline{x}
		\notag \\
	& \ \
			-
				\sum_{a=1}^3
				\sum_{b=1}^3
				\int_{\Sigma_t}
				\Ifact^{-1}
				\weight'(\Ifact^{-1} \Psi_0)
				\weight(\Ifact^{-1} \Psi_0) 
				\mathring{\Psi}_a \Psi_a
				(V_b)^2
			\, d \underline{x}
		\notag \\
	& \ \
		-
		2
		\sum_{a=1}^3
		\int_{\Sigma_t}
			\Ifact^{-1} \weight'(\Ifact^{-1} \Psi_0)
			(\partial_a \Psi_0) V_a V_0
			 \, d \underline{x}
			\notag
			\\
	& \ \
		-
		2
		\sum_{a=1}^3
		\int_{\Sigma_t}
			\Ifact^{-2} \Psi_0
			\weight'(\Ifact^{-1} \Psi_0)
			\Psi_a V_a V_0
			 \, d \underline{x}
			\notag
			\\
	& \ \
		+
		2
		\sum_{a=1}^3
		\int_{\Sigma_t}
			\Ifact^{-1} \Psi_0
			\weight'(\Ifact^{-1} \Psi_0)
			\mathring{\Psi}_a V_a V_0
			 \, d \underline{x}
			\notag
			\\
	& \ \
			+
			2
			\int_{\Sigma_t}
					V_0 F_0
			\, d \underline{x}
			+
			2
			\sum_{a=1}^3
			\int_{\Sigma_s}
				\weight(\Ifact^{-1} \Psi_0) V_a F_a
			\, d \underline{x}.
			\notag
	\end{align}
	\end{lemma}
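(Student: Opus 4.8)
The plan is to derive the energy identity \eqref{E:BASICENERGYID} by a direct integration-by-parts computation, differentiating $\mathbb{E}[V](t)$ under the integral sign and substituting the evolution equations \eqref{E:LINEARPSI0EVOLUTION}--\eqref{E:LINEARPSIIEVOLUTION}. First I would write
\[
\frac{d}{dt}\mathbb{E}[V](t)
= \int_{\Sigma_t} 2 V_0 \partial_t V_0 \, d\underline{x}
+ \sum_{a=1}^3 \int_{\Sigma_t} \left\{ \left[\partial_t \weight(\Ifact^{-1}\Psi_0)\right] V_a^2 + 2 \weight(\Ifact^{-1}\Psi_0) V_a \partial_t V_a \right\} d\underline{x}.
\]
For the first term I substitute \eqref{E:LINEARPSI0EVOLUTION}, obtaining $\sum_a 2 \weight(\Ifact^{-1}\Psi_0) V_0 \partial_a V_a$ plus $2 V_0 F_0$. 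For the last term I substitute \eqref{E:LINEARPSIIEVOLUTION}, obtaining $\sum_a 2\weight(\Ifact^{-1}\Psi_0) V_a \partial_a V_0$ plus the $2\sum_a \weight(\Ifact^{-1}\Psi_0) V_a F_a$ inhomogeneous term. The two principal-order products $2\weight(\Ifact^{-1}\Psi_0)(V_0 \partial_a V_a + V_a \partial_a V_0) = 2\partial_a\!\left(\weight(\Ifact^{-1}\Psi_0) V_0 V_a\right) - 2 (\partial_a \weight(\Ifact^{-1}\Psi_0)) V_0 V_a$; the perfect-derivative piece integrates to zero over $\Sigma_t$ by the compact support recorded in Remark~\ref{R:BOUNDEDWAVESPEED} (or rapid decay), leaving the cross term $- 2\sum_a (\partial_a \weight(\Ifact^{-1}\Psi_0)) V_0 V_a$.

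The remaining work is purely bookkeeping with the chain rule applied to $\weight = \weight(\Ifact^{-1}\Psi_0)$. I would compute, using \eqref{E:PARTALTPSI0EVOLUTION} for $\partial_t \Psi_0$, \eqref{E:INTEGRATINFACTORODEANDIC} for $\partial_t \Ifact = -\Psi_0$, and \eqref{E:FORMULAFORDERIVATIVESOFINTEGRATINGFACTOR} for $\partial_a \Ifact = -\Psi_a + \Ifact \mathring{\Psi}_a$, the expressions
\[
\partial_t\!\left(\Ifact^{-1}\Psi_0\right) = \Ifact^{-1}\partial_t \Psi_0 + \Ifact^{-2}\Psi_0^2,
\qquad
\partial_a\!\left(\Ifact^{-1}\Psi_0\right) = \Ifact^{-1}\partial_a \Psi_0 + \Ifact^{-2}\Psi_0 \Psi_a - \Ifact^{-1}\Psi_0 \mathring{\Psi}_a,
\]
so that $\partial_t \weight(\Ifact^{-1}\Psi_0) = \weight'(\Ifact^{-1}\Psi_0)\,\partial_t(\Ifact^{-1}\Psi_0)$ and similarly for $\partial_a$. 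Substituting \eqref{E:PARTALTPSI0EVOLUTION} into $\partial_t(\Ifact^{-1}\Psi_0)$ produces exactly the combination $(\Ifact^{-1}\Psi_0)^2 \weight'$ (the term from the $\Ifact^{-2}\Psi_0^2$ piece, noting $\Ifact^{-1}\cdot\Ifact^{-1}\weight(\Ifact^{-1}\Psi_0)\sum_a\Psi_a^2$ cancels against part of the $\partial_a\Psi_a$ contribution after using the wave equation's structure — I would track this carefully), plus the terms with $\partial_a \Psi_a$, with $(\Psi_a)^2$, and with $\mathring{\Psi}_a \Psi_a$, each multiplied by $\Ifact^{-1}$ or $\Ifact^{-2}$ and by $\weight' \weight$; these give the second, third, and fourth integrals on the right-hand side of \eqref{E:BASICENERGYID}. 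Likewise, expanding $-2\sum_a(\partial_a \weight(\Ifact^{-1}\Psi_0))V_0 V_a = -2\sum_a \weight'(\Ifact^{-1}\Psi_0)\left(\Ifact^{-1}\partial_a\Psi_0 + \Ifact^{-2}\Psi_0\Psi_a - \Ifact^{-1}\Psi_0\mathring{\Psi}_a\right) V_0 V_a$ produces exactly the fifth, sixth, and seventh integrals.

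The main obstacle is not conceptual but combinatorial: one must verify that when $\partial_t \Psi_0$ is replaced via \eqref{E:PARTALTPSI0EVOLUTION}, the resulting factor $\weight(\Ifact^{-1}\Psi_0)\sum_a\partial_a\Psi_a$ inside $\partial_t(\Ifact^{-1}\Psi_0)$ combines with the chain-rule factor $\Ifact^{-1}\weight'(\Ifact^{-1}\Psi_0)$ to yield precisely the coefficient $\Ifact^{-1}\weight'(\Ifact^{-1}\Psi_0)\weight(\Ifact^{-1}\Psi_0)\partial_a\Psi_a$ multiplying $\sum_b(V_b)^2$, and that the $\Ifact^{-1}\weight(\Ifact^{-1}\Psi_0)\sum_a\Psi_a^2$ and $-\weight(\Ifact^{-1}\Psi_0)\sum_a\mathring{\Psi}_a\Psi_a$ terms in \eqref{E:PARTALTPSI0EVOLUTION} produce, after multiplication by $\Ifact^{-1}\weight'$, the $\Ifact^{-2}\weight'\weight(\Psi_a)^2$ and $-\Ifact^{-1}\weight'\weight\mathring{\Psi}_a\Psi_a$ integrals respectively, summed against $\sum_b(V_b)^2$. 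I would organize the substitution so that every term on the right-hand side of \eqref{E:BASICENERGYID} is accounted for exactly once, checking signs against the minus sign in $\partial_t\Ifact = -\Ifact\partial_t\Phi$ and in \eqref{E:FORMULAFORDERIVATIVESOFINTEGRATINGFACTOR}. Throughout, the spatial integration by parts that kills the perfect-derivative term is justified by the finite speed of propagation and compact support of the data, as recorded in Remark~\ref{R:BOUNDEDWAVESPEED}.
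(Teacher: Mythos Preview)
Your approach is correct and essentially identical to the paper's: differentiate the energy, substitute the evolution equations \eqref{E:LINEARPSI0EVOLUTION}--\eqref{E:LINEARPSIIEVOLUTION}, integrate the principal term by parts, and expand $\partial_t\weight(\Ifact^{-1}\Psi_0)$ and $\partial_a\weight(\Ifact^{-1}\Psi_0)$ via the chain rule using \eqref{E:INTEGRATINFACTORODEANDIC}, \eqref{E:PARTALTPSI0EVOLUTION}, and \eqref{E:FORMULAFORDERIVATIVESOFINTEGRATINGFACTOR}. One clarification: the parenthetical where you suspect a cancellation is unnecessary --- substituting \eqref{E:PARTALTPSI0EVOLUTION} into $\Ifact^{-1}\partial_t\Psi_0$ directly yields the three products $\Ifact^{-1}\weight'\weight\,\partial_a\Psi_a$, $\Ifact^{-2}\weight'\weight\,\Psi_a^2$, and $-\Ifact^{-1}\weight'\weight\,\mathring{\Psi}_a\Psi_a$, which (together with the $(\Ifact^{-1}\Psi_0)^2\weight'$ piece from $-\Ifact^{-2}\Psi_0\,\partial_t\Ifact$) account for the first four integrals on the right-hand side of \eqref{E:BASICENERGYID} with no further manipulation.
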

	
	\begin{proof}
		First, using equations \eqref{E:INTEGRATINFACTORODEANDIC} and \eqref{E:PARTALTPSI0EVOLUTION},
		we compute that
		\begin{align} \label{E:TIMEDERIVATIVEOFWEIGHT}
		\partial_t 
		\left\lbrace 
			\weight(\Ifact^{-1} \Psi_0)
		\right\rbrace
		& 
		=
		\Ifact^{-1} \weight'(\Ifact^{-1} \Psi_0) (\partial_t \Psi_0)
		+
		(\Ifact^{-1} \Psi_0)^2 \weight'(\Ifact^{-1} \Psi_0)
		\\
		& = 	
		\sum_{a=1}^3
		\Ifact^{-1} \weight'(\Ifact^{-1} \Psi_0)
		\weight(\Ifact^{-1} \Psi_0) 
		(\partial_a \Psi_a)
			\notag \\
		& \ \
		+
		\sum_{a=1}^3
		\Ifact^{-2}
		\weight'(\Ifact^{-1} \Psi_0)
		\weight(\Ifact^{-1} \Psi_0) 
		(\Psi_a)^2
			\notag \\
	& \ \
		-
		\sum_{a=1}^3
		\Ifact^{-1}
		\weight'(\Ifact^{-1} \Psi_0)
		\weight(\Ifact^{-1} \Psi_0)
		\mathring{\Psi}_a \Psi_a
		\notag \\
	& \ \
		+
		(\Ifact^{-1} \Psi_0)^2 
		\weight'(\Ifact^{-1} \Psi_0).
		\notag
		\end{align}
		Taking the time derivative of \eqref{E:BASICENERGY}, using \eqref{E:TIMEDERIVATIVEOFWEIGHT},  
		and using \eqref{E:LINEARPSI0EVOLUTION}-\eqref{E:LINEARPSIIEVOLUTION}
		for substitution, we obtain
		\begin{align} \label{E:TIMEDERIVATIVEOFBASICENERGY}
			\frac{d}{dt} \mathbb{E}[V](t)
			& = 
				2
				\sum_{a=1}^3
				\int_{\Sigma_t}
					\left\lbrace
						\weight(\Ifact^{-1} \Psi_0) 
						V_0 \partial_a V_a
						+ 
						\weight(\Ifact^{-1} \Psi_0)
						V_a \partial_a V_0
					\right\rbrace
				\, d \underline{x}
					\\
		& \ \
				+
				2
				\int_{\Sigma_t}
					\left\lbrace
						V_0 F_0
						+
						\sum_{a=1}^3
						\weight(\Ifact^{-1} \Psi_0)
						V_a F_a
					\right\rbrace
				 \, d \underline{x}
				\notag	\\
		& \ \
				+
			\sum_{a=1}^3 
			\sum_{b=1}^3
			\int_{\Sigma_t}
				\Ifact^{-1} \weight'(\Ifact^{-1} \Psi_0)
					\weight(\Ifact^{-1} \Psi_0) 
					(\partial_a \Psi_a)
					(V_b)^2
			 \, d \underline{x}
				\notag
					\\
		& \ \
				+
			\sum_{a=1}^3 
			\sum_{b=1}^3
			\int_{\Sigma_t}
					\Ifact^{-2}
					\weight'(\Ifact^{-1} \Psi_0)
					\weight(\Ifact^{-1} \Psi_0) 
					(\Psi_a)^2
				(V_b)^2
			 \, d \underline{x}
				\notag
					\\
		& \ \
			-
			\sum_{a=1}^3 
			\sum_{b=1}^3
			\int_{\Sigma_t}
					\Ifact^{-1}
					\weight'(\Ifact^{-1} \Psi_0)
					\weight(\Ifact^{-1} \Psi_0)
					\mathring{\Psi}_a \Psi_a
					(V_b)^2
			 \, d \underline{x}
				\notag
					\\
		& \ \
				+
			\sum_{a=1}^3 
			\int_{\Sigma_t}
				(\Ifact^{-1} \Psi_0)^2 \weight'(\Ifact^{-1} \Psi_0) (V_a)^2
			\, d \underline{x}.
			\notag
		\end{align}
		Integrating by parts in the first integral on RHS~\eqref{E:TIMEDERIVATIVEOFBASICENERGY}
		and using the identity \eqref{E:FORMULAFORDERIVATIVESOFINTEGRATINGFACTOR},
		we obtain
		\begin{align} \label{E:IBPFIRSTINTEGRALINTIMEDERIVATIVEOFBASICENERGY}
			&
			2
				\sum_{a=1}^3
				\int_{\Sigma_t}
					\left\lbrace
						\weight(\Ifact^{-1} \Psi_0) 
						V_0 \partial_a V_a
						+ 
						\weight(\Ifact^{-1} \Psi_0)
						V_a \partial_a V_0
					\right\rbrace
				\, d \underline{x}
					\\
			& = 
				- 2
				\sum_{a=1}^3
				\int_{\Sigma_t}
					\Ifact^{-1} \weight'(\Ifact^{-1} \Psi_0)
					(\partial_a \Psi_0) V_a V_0
				\, d \underline{x}
					\notag \\
			& \ \ 
				-
				2
				\sum_{a=1}^3
				\int_{\Sigma_t}
					\Ifact^{-2} \Psi_0
					\weight'(\Ifact^{-1} \Psi_0)
					\Psi_a V_a V_0
				\, d \underline{x}
				+
				2
				\sum_{a=1}^3
				\int_{\Sigma_t}
					\Ifact^{-1} \Psi_0
			\weight'(\Ifact^{-1} \Psi_0)
			\mathring{\Psi}_a V_a V_0
			 \, d \underline{x}.
			\notag
		\end{align}
		Using \eqref{E:IBPFIRSTINTEGRALINTIMEDERIVATIVEOFBASICENERGY} to substitute 
		for the first integral on RHS~\eqref{E:TIMEDERIVATIVEOFBASICENERGY},
		we arrive at \eqref{E:BASICENERGYID}.
	\end{proof}

\subsection{Integral identity for the fundamental $L^2$-controlling quantity}	
We now derive an energy identity for
the controlling quantity $\mathbb{Q}_{(\mathring{\upepsilon})}$.

\begin{lemma}[\textbf{Integral identity for the $L^2$-controlling quantity}]
	\label{L:INTEGRALIDENTITYFORENERGY}
	Consider the following inhomogeneous system,
	obtained by commuting \eqref{E:PARTALTPSI0EVOLUTION}-\eqref{E:PARTALTPSIIEVOLUTION} 
	with $\nabla^k$:
	\begin{subequations}
	\begin{align} 
	\partial_t \nabla^k \Psi_0
	& = 
		\weight(\Ifact^{-1} \Psi_0) \sum_{a=1}^3 \partial_a \nabla^k \Psi_a
		+
		F_0^{(k)},
		\label{E:ENERGYIDCOMMUTEDPARTALTPSI0EVOLUTION} \\
	\partial_t \nabla^k \Psi_i
	& 
	= \partial_i \nabla^k \Psi_0
		+ F_i^{(k)}.
	\label{E:ENERGYIDCOMMUTEDPARTALTPSIIEVOLUTION}
	\end{align}
	\end{subequations}
	For solutions,
	the $L^2$-controlling quantity $\mathbb{Q}_{(\mathring{\upepsilon})}$ of Def.~\ref{D:L2CONTROLLINGQUANTITY}
	satisfies the following integral identity:
	\begin{align} \label{E:INTEGRALIDENTITYFORENERGY}
	\mathbb{Q}_{(\mathring{\upepsilon})}(t)
	& =
		\mathbb{Q}_{(\mathring{\upepsilon})}(0)
		+
		\sum_{k=2}^5
		\sum_{a=1}^3
		\int_{s=0}^t
		\int_{\Sigma_s}
			(\Ifact^{-1} \Psi_0)^2 
			\weight'(\Ifact^{-1} \Psi_0)
				|\nabla^{k} \Psi_a|^2
		\, d \underline{x}	
		\, ds
		\\
	& \ \
		+
		\sum_{k=2}^5
		\sum_{a=1}^3
		\sum_{b=1}^3
		\int_{s=0}^t
		\int_{\Sigma_s}
					\Ifact^{-1} \weight'(\Ifact^{-1} \Psi_0)
					\weight(\Ifact^{-1} \Psi_0) 
					(\partial_a \Psi_a)
					|\nabla^{k} \Psi_b|^2
				\, d \underline{x}
		\, ds
			\notag \\
	& \ \
		+ \sum_{k=2}^5
			\sum_{a=1}^3
			\sum_{b=1}^3
			\int_{s=0}^t
			\int_{\Sigma_s}
				\Ifact^{-2} 
				\weight'(\Ifact^{-1} \Psi_0)
				\weight(\Ifact^{-1} \Psi_0) 
				 |\Psi_a|^2 
				|\nabla^{k} \Psi_b|^2
			\, d \underline{x}
			\, ds
		\notag \\
	& \ \
			-
			\sum_{k=2}^5
			\sum_{a=1}^3
			\sum_{b=1}^3
			\int_{s=0}^t
			\int_{\Sigma_s}
				\Ifact^{-1} 
				\weight'(\Ifact^{-1} \Psi_0)
				\weight(\Ifact^{-1} \Psi_0) 
				\mathring{\Psi}_a \Psi_a
				|\nabla^{k} \Psi_b|^2
			\, d \underline{x}
			\, ds
		\notag \\
	& \ \
		-
		2
		\sum_{k=2}^5
		\sum_{a=1}^3
		\int_{s=0}^t
		\int_{\Sigma_s}
			\Ifact^{-1} \weight'(\Ifact^{-1} \Psi_0)
			(\partial_a \Psi_0) \nabla^{k} \Psi_a \cdot \nabla^{k} \Psi_0
		\, d \underline{x}
		\, ds
			\notag
			\\
	& \ \
		-
		2
		\sum_{k=2}^5
		\sum_{a=1}^3
		\int_{s=0}^t
		\int_{\Sigma_s}
			\Ifact^{-2} \Psi_0
			\weight'(\Ifact^{-1} \Psi_0)
			\Psi_a \nabla^{k} \Psi_a \cdot \nabla^{k} \Psi_0
			\, d \underline{x}
		\, ds
			\notag \\
	& \ \
		+
		2
		\sum_{k=2}^5
		\sum_{a=1}^3
		\int_{s=0}^t
		\int_{\Sigma_s}
			\Ifact^{-1} \Psi_0
			\weight'(\Ifact^{-1} \Psi_0)
			\mathring{\Psi}_a \nabla^{k} \Psi_a \cdot \nabla^{k} \Psi_0
			\, d \underline{x}
		\, ds
			\notag
			\\
	& \ \
			+
		2
		\sum_{a=1}^3
		\sum_{k=1}^4
		\int_{s=0}^t
		\int_{\Sigma_s}
			\nabla^{k} \Psi_a \cdot \partial_a \nabla^{k} \Psi_0
		\, d \underline{x}
		\, ds
			\notag \\
	& \ \
			+
			2
			\sum_{k=2}^5
			\int_{s=0}^t
			\int_{\Sigma_s}
					\nabla^{k} \Psi_0 \cdot F_0^{(k)}
			\, d \underline{x}
			\, ds
				\notag \\
	& \ \
			+
			2
			\sum_{k=2}^5
			\sum_{a=1}^3
			\int_{s=0}^t
			\int_{\Sigma_s}
				\weight(\Ifact^{-1} \Psi_0) 
				\nabla^{k} \Psi_a \cdot F_a^{(k)}
			\, d \underline{x}
			\, ds
			\notag	\\
& \ \
		+
		2
		\sum_{k=1}^4
		\sum_{a=1}^3
		\int_{s=0}^t
		\int_{\Sigma_s}
			\nabla^{k} \Psi_a \cdot F_a^{(k)}
		\, d \underline{x}
		\, ds
			\notag \\
& \ \
		+
		2
		\mathring{\upepsilon}^3
		\sum_{a=1}^3
		\int_{s=0}^t
		\int_{\Sigma_s}
			\weight(\Ifact^{-1} \Psi_0) \nabla \Psi_0 \cdot \partial_a \nabla \Psi_a
		\, d \underline{x}
		\, ds
			\notag
			\\
& \ \
		+
		2
		\mathring{\upepsilon}^3
		\int_{s=0}^t
		\int_{\Sigma_s}
			\nabla \Psi_0 \cdot F_0^{(1)}
		\, d \underline{x}
		\, ds
		\notag \\
& \ \
		+
		2
		\mathring{\upepsilon}^3
		\sum_{a=1}^3
		\int_{s=0}^t
		\int_{\Sigma_s}
			\Psi_a \partial_a \Psi_0
		\, ds
		\notag \\
& \ \
		-
		2
		\mathring{\upepsilon}^3
		\sum_{a=1}^3
		\int_{s=0}^t
		\int_{\Sigma_s}
			\Psi_0 
			\Psi_a \mathring{\Psi}_a
			\, d \underline{x}
		\, ds.
		\notag
\end{align}

\end{lemma}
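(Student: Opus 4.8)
The plan is to observe that $\mathbb{Q}_{(\mathring{\upepsilon})}$ is a sum of three types of $L^2$ quantities, to compute the time derivative of each type separately, and then to integrate in time over $[0,t]$. Throughout, for a spatial multi-index $\vec{\alpha}$ with $|\vec{\alpha}| = k$ I write $\partial^{\vec{\alpha}}$ for the corresponding $k^{th}$-order Cartesian derivative operator, and I interpret $|\nabla^{k}\Psi_a|^2$, $\nabla^{k}\Psi_a\cdot\nabla^{k}\Psi_0$, etc., as the sums over all $\vec{\alpha}$ with $|\vec{\alpha}|=k$ of the respective quadratic expressions in $\partial^{\vec{\alpha}}$.

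First, I treat the top-order group $\sum_{k=2}^5\int_{\Sigma_t}\{|\nabla^{k}\Psi_0|^2 + \sum_{a=1}^3\weight(\Ifact^{-1}\Psi_0)|\nabla^{k}\Psi_a|^2\}\,d\underline{x}$. The key point is that, having commuted \eqref{E:PARTALTPSI0EVOLUTION}--\eqref{E:PARTALTPSIIEVOLUTION} with $\partial^{\vec{\alpha}}$ and collected all commutator terms and all differentiated lower-order (Riccati-type) contributions into $F_0^{(k)}$ and $F_i^{(k)}$ as in \eqref{E:ENERGYIDCOMMUTEDPARTALTPSI0EVOLUTION}--\eqref{E:ENERGYIDCOMMUTEDPARTALTPSIIEVOLUTION}, the array $V := (\partial^{\vec{\alpha}}\Psi_0,\partial^{\vec{\alpha}}\Psi_1,\partial^{\vec{\alpha}}\Psi_2,\partial^{\vec{\alpha}}\Psi_3)$ is exactly a solution of the inhomogeneous linear system \eqref{E:LINEARPSI0EVOLUTION}--\eqref{E:LINEARPSIIEVOLUTION} with $F_{\alpha} = F_{\alpha}^{(k)}$ (note in particular that $\weight$ remains factored outside the principal term, as required). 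Hence $\mathbb{E}[\partial^{\vec{\alpha}}\Psi](t)$ obeys the basic energy identity \eqref{E:BASICENERGYID} of Lemma~\ref{L:BASICENERGYID}. Integrating that identity in time over $[0,t]$, summing over the finitely many $\vec{\alpha}$ with $|\vec{\alpha}| = k$, and then summing over $k=2,\dots,5$ reproduces, term by term, all of the integrals on RHS~\eqref{E:INTEGRALIDENTITYFORENERGY} that carry a factor of $\weight'$, together with the two source integrals $2\sum_{k=2}^5\int\!\!\int\nabla^{k}\Psi_0\cdot F_0^{(k)}$ and $2\sum_{k=2}^5\sum_{a=1}^3\int\!\!\int\weight(\Ifact^{-1}\Psi_0)\,\nabla^{k}\Psi_a\cdot F_a^{(k)}$.

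Next, I handle the unweighted groups directly, with no appeal to Lemma~\ref{L:BASICENERGYID} needed. For each $\vec{\alpha}$ with $|\vec{\alpha}| = k\in\{1,\dots,4\}$, differentiating under the integral sign and substituting \eqref{E:ENERGYIDCOMMUTEDPARTALTPSIIEVOLUTION} gives $\frac{d}{dt}\int_{\Sigma_t}|\partial^{\vec{\alpha}}\Psi_a|^2\,d\underline{x} = 2\int_{\Sigma_t}\partial^{\vec{\alpha}}\Psi_a\cdot(\partial_a\partial^{\vec{\alpha}}\Psi_0 + F_a^{(k)})\,d\underline{x}$; summing over $\vec{\alpha}$, $a$, and $k$, and integrating in time, yields the two integrals $2\sum_{a}\sum_{k=1}^4\int\!\!\int\nabla^{k}\Psi_a\cdot\partial_a\nabla^{k}\Psi_0$ and $2\sum_{k=1}^4\sum_a\int\!\!\int\nabla^{k}\Psi_a\cdot F_a^{(k)}$. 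Similarly, differentiating $\int_{\Sigma_t}|\nabla\Psi_0|^2\,d\underline{x}$ and substituting the $k=1$ case of \eqref{E:ENERGYIDCOMMUTEDPARTALTPSI0EVOLUTION} gives $2\int_{\Sigma_t}\nabla\Psi_0\cdot(\weight(\Ifact^{-1}\Psi_0)\sum_a\partial_a\nabla\Psi_a + F_0^{(1)})\,d\underline{x}$, while differentiating $\int_{\Sigma_t}|\Psi_a|^2\,d\underline{x}$ and using the uncommuted equation \eqref{E:PARTALTPSIIEVOLUTION} gives $2\int_{\Sigma_t}\Psi_a\cdot(\partial_a\Psi_0 - \mathring{\Psi}_a\Psi_0)\,d\underline{x}$; multiplying by $\mathring{\upepsilon}^3$, summing over $a$, and integrating in time accounts for the last four lines of RHS~\eqref{E:INTEGRALIDENTITYFORENERGY}. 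Adding the three time-integrated identities and letting $\mathbb{Q}_{(\mathring{\upepsilon})}(0)$ collect all the $s=0$ boundary contributions produces \eqref{E:INTEGRALIDENTITYFORENERGY}.

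There is no genuine analytic obstacle in this lemma; it is a bookkeeping computation. The points requiring care are: (i) verifying that commuting \eqref{E:PARTALTPSI0EVOLUTION}--\eqref{E:PARTALTPSIIEVOLUTION} with $\nabla^k$ produces exactly the structure \eqref{E:ENERGYIDCOMMUTEDPARTALTPSI0EVOLUTION}--\eqref{E:ENERGYIDCOMMUTEDPARTALTPSIIEVOLUTION} — immediate from the Leibniz rule, since the commutator $[\partial^{\vec{\alpha}},\weight(\Ifact^{-1}\Psi_0)]\sum_a\partial_a\Psi_a$ is of the same (or lower) differential order as the principal term and so can be absorbed into $F_0^{(k)}$; (ii) justifying differentiation under the integral sign and the single integration by parts invoked inside the proof of Lemma~\ref{L:BASICENERGYID}, with vanishing boundary terms — both guaranteed by the smoothness of the solution on $[0,T_{(Boot)})$ and its spatial compact support (Remark~\ref{R:BOUNDEDWAVESPEED}); and (iii) matching signs and numerical coefficients when transcribing the output of Lemma~\ref{L:BASICENERGYID} against the many integrals on RHS~\eqref{E:INTEGRALIDENTITYFORENERGY}. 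This last item is the only real source of potential error and is the step on which I would expend the most care.
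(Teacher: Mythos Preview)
Your proposal is correct and follows essentially the same approach as the paper: apply the basic energy identity of Lemma~\ref{L:BASICENERGYID} with $V = \nabla^k\Psi$ to handle the weighted top-order terms, compute the time derivatives of the unweighted lower-order integrals directly from the evolution equations, and integrate in time. The paper's proof is slightly terser but structurally identical.
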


\begin{proof}
	We take the time derivative of both sides of \eqref{E:ENERGYTOCONTROLSOLNS}.
	The time derivative of the first line of RHS~\eqref{E:ENERGYTOCONTROLSOLNS}
	is given by \eqref{E:BASICENERGYID}, where the role of 
	$(V_0,V_1,V_2,V_3)$
	in \eqref{E:BASICENERGYID}
	is played by
	$(\nabla^{k} \Psi_0,\nabla^{k} \Psi_1,\nabla^{k} \Psi_2,\nabla^{k} \Psi_3)$
	and the role of the inhomogeneous terms
	$F_{\alpha}$ on RHS~\eqref{E:BASICENERGYID}
	is played by the terms $F_{\alpha}^{(k)}$
	from \eqref{E:ENERGYIDCOMMUTEDPARTALTPSI0EVOLUTION}-\eqref{E:ENERGYIDCOMMUTEDPARTALTPSIIEVOLUTION}.
	Moreover, with the help of 
	\eqref{E:PARTALTPSIIEVOLUTION}
	and
	\eqref{E:ENERGYIDCOMMUTEDPARTALTPSI0EVOLUTION}-\eqref{E:ENERGYIDCOMMUTEDPARTALTPSIIEVOLUTION},
	we compute that the time derivatives of the terms on the second line of RHS~\eqref{E:ENERGYTOCONTROLSOLNS}
	are equal to
	\begin{align} \label{E:ENERGYTIMEDERIVATIVESOFLOWERORDERTERMS}
		&
		2
		\sum_{k=1}^4
		\sum_{a=1}^3
		\int_{\Sigma_t}
			\nabla^{k} \Psi_a \cdot \partial_a \nabla^{k} \Psi_0
		\, d \underline{x}
		+
		2
		\sum_{k=1}^4
		\sum_{a=1}^3
		\int_{\Sigma_t}
			\nabla^{k} \Psi_a \cdot F_a^{(k)}
		\, d \underline{x}
			\\
& \ \
		+
		2 \mathring{\upepsilon}^3
		\sum_{a=1}^3
		\int_{\Sigma_t}
			\left\lbrace
			\weight(\Ifact^{-1} \Psi_0) \nabla \Psi_0 \cdot \partial_a \nabla \Psi_a
			+
			\nabla \Psi_0 \cdot F_0^{(1)}
			+
			\Psi_a \partial_a \Psi_0
			-
			\Psi_0 \Psi_a \mathring{\Psi}_a
			\right\rbrace
		\, d \underline{x}.
		\notag
	\end{align}
	Combining these calculations, we deduce
	\begin{align} \label{E:DIFFERENTIALFORMENERGYID}
		\frac{d}{dt}
		\mathbb{Q}_{(\mathring{\upepsilon})}(t)
		&
		=
		\sum_{k=2}^5
		\sum_{a=1}^3
		\int_{\Sigma_t}
			(\Ifact^{-1} \Psi_0)^2 \weight'(\Ifact^{-1} \Psi_0)
				|\nabla^{k} \Psi_a|^2
		\, d \underline{x}	
				\\
	& \ \ 
		+ 
		\sum_{k=2}^5
		\sum_{a=1}^3
			\sum_{b=1}^3
		\int_{\Sigma_t}
					\Ifact^{-1} \weight'(\Ifact^{-1} \Psi_0)
					\weight(\Ifact^{-1} \Psi_0) 
					 \partial_a \Psi_a
					|\nabla^{k} \Psi_b|^2
				\, d \underline{x}
			\notag \\
	& \ \
		+ \sum_{k=2}^5
			\sum_{a=1}^3
			\sum_{b=1}^3
			\int_{\Sigma_t}
				\Ifact^{-2} 
				\weight'(\Ifact^{-1} \Psi_0)
				\weight(\Ifact^{-1} \Psi_0) 
				|\Psi_a|^2 
				|\nabla^{k} \Psi_b|^2
			\, d \underline{x}
		\notag \\
	& \ \
			-
			\sum_{k=2}^5
			\sum_{a=1}^3
			\sum_{b=1}^3
			\int_{\Sigma_t}
				\Ifact^{-1}
				\weight'(\Ifact^{-1} \Psi_0)
				\weight(\Ifact^{-1} \Psi_0) 
				 \mathring{\Psi}_a \Psi_a
					|\nabla^{k} \Psi_b|^2
			\, d \underline{x}
		\notag \\
	& \ \
		-
		2
		\sum_{k=2}^5
		\sum_{a=1}^3
		\int_{\Sigma_t}
			\Ifact^{-1} \weight'(\Ifact^{-1} \Psi_0)
			(\partial_a \Psi_0) \nabla^{k} \Psi_a \cdot \nabla^{k} \Psi_0
			 \, d \underline{x}
			\notag
			\\
	& \ \
		-
		2
		\sum_{k=2}^5
		\sum_{a=1}^3
		\int_{\Sigma_t}
			\Ifact^{-2} \Psi_0
			\weight'(\Ifact^{-1} \Psi_0)
			\Psi_a \nabla^{k} \Psi_a \cdot \nabla^{k} \Psi_0
			 \, d \underline{x}
			\notag
			\\
	& \ \
		+
		2
		\sum_{k=2}^5
		\sum_{a=1}^3
		\int_{\Sigma_t}
			\Ifact^{-1} \Psi_0
			\weight'(\Ifact^{-1} \Psi_0)
			\mathring{\Psi}_a \nabla^{k} \Psi_a \cdot \nabla^{k} \Psi_0
			 \, d \underline{x}
			\notag
			\\
	& \ \
			+
			2
			\sum_{k=2}^5
			\int_{\Sigma_t}
					\nabla^{k} \Psi_0 \cdot F_0^{(k)}
			\, d \underline{x}
			+
			2
			\sum_{a=1}^3
			\int_{\Sigma_t}
				\weight(\Ifact^{-1} \Psi_0) \nabla^{k} \Psi_a \cdot F_a^{(k)}
			\, d \underline{x}
			+
			\mbox{\eqref{E:ENERGYTIMEDERIVATIVESOFLOWERORDERTERMS}}.
			\notag
	\end{align}
	Integrating \eqref{E:DIFFERENTIALFORMENERGYID}
	from time $0$ to time $t$,
	we arrive at the desired identity \eqref{E:INTEGRALIDENTITYFORENERGY}.
\end{proof}

\section{A priori estimates}
\label{S:ESTIMATES}
In this section, we use the data-size and bootstrap assumptions of Sect.\ \ref{S:DATAANDBOOTSTRAP}
and the energy identities of Sect.\ \ref{S:ENERGY} to derive a priori estimates.

\subsection{Conventions for constants}
	\label{SS:CONVENTIONSFORCONSTANTS}
	In our estimates, the explicit constants $C > 0$ and $c > 0$ are free to vary from line to line.
	\textbf{These explicit constants, and implicit ones as well, are allowed to depend on the data-size parameters
	$\mathring{A}$
	and 
	$\mathring{A}_*^{-1}$
	from Subsect.\ \ref{SS:DATAASSUMPTIONS}}.
	However, the constants can be chosen to be 
	independent of the parameters $\mathring{\upepsilon}$ 
	and $\varepsilon$ whenever $\mathring{\upepsilon}$ and $\varepsilon$
	are sufficiently small relative to 
	$\mathring{A}^{-1}$
	and $\mathring{A}_*$
	in the sense described in Subsect.\ \ref{SS:SMALLNESSASSUMPTIONS}.
	For example, under our conventions, we have that $\mathring{A}_*^{-2} \varepsilon = \mathcal{O}(\varepsilon)$.

\subsection{Pointwise estimates tied to the integrating factor}
\label{SS:POINTWISEESTIMATES}
In this subsection, we derive pointwise estimates that are important for
analyzing $\Ifact$.

We start by deriving sharp estimates for $\Psi_0$.
The proof is based on separately considering regions where
$\Ifact$ is small and $\Ifact$ is large.
In Lemma~\ref{L:INTEGRATINGFACTORCRUCIALESTIMATES}, 
we will use these estimates 
to derive further information about the behavior of $\Psi_0$ in regions where 
$\Ifact$ is small (i.e., near the singularity),
which is crucial for closing the energy estimates.

\begin{lemma}[\textbf{Pointwise estimates for} $\Psi_0$]
	\label{L:POINTWISEFORPSIANDPARTIALTPSI}
	Under the data-size assumptions of Subsect.\ \ref{SS:DATAASSUMPTIONS},
	the bootstrap assumptions of Subsect.\ \ref{SS:BOOTSTRAP},
	and the smallness assumptions of Subsect.\ \ref{SS:SMALLNESSASSUMPTIONS},
	the following pointwise estimates hold for
	$(t,\underline{x}) \in [0, T_{(Boot)}) \times \mathbb{R}^3$:
	\begin{align}
		\Psi_0(t,\underline{x})
		& = 
			\mathring{\Psi}_0(\underline{x})
			+
			\mathcal{O}(\varepsilon),
			\label{E:PSI0WELLAPPROXIMATED} 
	\end{align}
	where $\mathring{\Psi}_0(\underline{x}) = \Psi_0(0,\underline{x})$.
	
	In addition,
	\begin{align} \label{E:PSI0BIGGERTHANMINUSONEHALF}
		-5/16 & \leq \min_{\Sigma_t} \Psi_0.
	\end{align}
\end{lemma}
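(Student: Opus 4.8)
The plan is to prove \eqref{E:PSI0WELLAPPROXIMATED} first, and then deduce \eqref{E:PSI0BIGGERTHANMINUSONEHALF} as an immediate consequence. For \eqref{E:PSI0WELLAPPROXIMATED}, the idea is to integrate the evolution equation \eqref{E:PARTALTPSI0EVOLUTION} for $\Psi_0$ in time and show that the right-hand side is $\mathcal{O}(\varepsilon)$ pointwise, uniformly on the bootstrap slab $[0,T_{(Boot)}) \times \mathbb{R}^3$. Concretely, by the fundamental theorem of calculus, $\Psi_0(t,\underline{x}) = \mathring{\Psi}_0(\underline{x}) + \int_{s=0}^t \partial_t \Psi_0(s,\underline{x}) \, ds$, so it suffices to show $\left| \int_{s=0}^t \partial_t \Psi_0(s,\underline{x}) \, ds \right| \lesssim \varepsilon$. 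First I would examine each of the three terms on RHS~\eqref{E:PARTALTPSI0EVOLUTION}. The first term $\weight(\Ifact^{-1}\Psi_0) \sum_a \partial_a \Psi_a$ is bounded by $C \varepsilon$ using the $L^\infty$ bootstrap assumption \eqref{E:PSIIANDDERIVATIVESBOOTSTRAP} on $\nabla \Psi_a$ together with the fact (guaranteed by \eqref{E:BOOTSTRAPRATIO} and the weight assumptions of Subsect.~\ref{SS:WEIGHTASSUMPTIONS}) that $0 < \weight(\Ifact^{-1}\Psi_0) \le C$. The third term $\weight(\Ifact^{-1}\Psi_0) \sum_a \mathring{\Psi}_a \Psi_a$ is likewise $\mathcal{O}(\varepsilon)$ by combining the data bound \eqref{E:DATASIZE} on $\mathring{\Psi}_a$, the bootstrap bound \eqref{E:PSIIANDDERIVATIVESBOOTSTRAP} on $\Psi_a$, and the upper bound on the weight.

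The delicate term is the middle one, $\Ifact^{-1} \weight(\Ifact^{-1}\Psi_0) \sum_a \Psi_a \Psi_a$, because of the potentially singular factor $\Ifact^{-1}$. Here the key point — flagged in the introduction and to be supplied rigorously by Lemma~\ref{L:ESTIMATESINVOLVINGWEIGHT} — is that the product $\Ifact^{-1} \weight(\Ifact^{-1}\Psi_0)$ remains \emph{uniformly bounded} up to the singularity, as a consequence of the decay assumption \eqref{E:ESTIMATEFORDERIVATIVESOFWEIGHT} on $\weight$ (with $k=0$, this reads $|(1+y)\weight(y)| \le C_0$; writing $y = \Ifact^{-1}\Psi_0$ and using that $\Psi_0 \ge \tfrac18 \mathring{A}_*$ wherever $\Ifact \le \tfrac18$ via \eqref{E:BOOTSTRAPSMALLINTFACTIMPMLIESPSI0ISLARGE}, one gets $\Ifact^{-1}\weight(\Ifact^{-1}\Psi_0) \lesssim 1 + \Psi_0^{-1} \lesssim 1$ in the small-$\Ifact$ region, while in the region $\Ifact > \tfrac18$ the factor $\Ifact^{-1}$ is trivially bounded by $8$). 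Hence the middle term is bounded by $C |\Psi_a|^2 \le C\varepsilon^2 \le C\varepsilon$ using \eqref{E:PSIIANDDERIVATIVESBOOTSTRAP}. Therefore $|\partial_t \Psi_0(s,\underline{x})| \le C\varepsilon$ pointwise, and integrating over $s \in [0,t) \subset [0, 2\mathring{A}_*^{-1})$ (so the time interval has length $\le 2\mathring{A}_*^{-1}$, which is absorbed into the constant $C$ per the conventions of Subsect.~\ref{SS:CONVENTIONSFORCONSTANTS}) yields $\left| \int_0^t \partial_t \Psi_0 \, ds \right| \le C\varepsilon$, establishing \eqref{E:PSI0WELLAPPROXIMATED}.

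For \eqref{E:PSI0BIGGERTHANMINUSONEHALF}, I would simply combine \eqref{E:PSI0WELLAPPROXIMATED} with the data assumption \eqref{E:PSI0NOTTOONEGATIVE}, namely $\mathring{\Psi}_0(\underline{x}) \ge -\tfrac14$ for all $\underline{x}$. This gives $\Psi_0(t,\underline{x}) \ge -\tfrac14 - C\varepsilon$ for all $(t,\underline{x})$ in the slab, and since $\varepsilon$ is small relative to $1$ (and to $\mathring{A}_*$) in the sense of Subsect.~\ref{SS:SMALLNESSASSUMPTIONS}, choosing $\varepsilon$ small enough that $C\varepsilon \le \tfrac{1}{16}$ forces $\Psi_0(t,\underline{x}) \ge -\tfrac14 - \tfrac1{16} = -\tfrac{5}{16}$, which is \eqref{E:PSI0BIGGERTHANMINUSONEHALF}.

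The main obstacle is controlling the middle term, i.e., justifying the uniform boundedness of $\Ifact^{-1}\weight(\Ifact^{-1}\Psi_0)$ near the singularity; this rests on the structural weight assumptions and, at the level of a self-contained proof, on Lemma~\ref{L:ESTIMATESINVOLVINGWEIGHT} (whose proof the introduction identifies as the most technical part of the article). Everything else is a routine integration of a pointwise bound over a time interval of controlled length, using the $L^\infty$ bootstrap assumptions and the conventions for constants.
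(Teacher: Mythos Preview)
Your proposal is correct and follows essentially the same approach as the paper: bound $|\partial_t\Psi_0|\lesssim\varepsilon$ term by term from \eqref{E:PARTALTPSI0EVOLUTION}, handling the dangerous $\Ifact^{-1}\weight(\Ifact^{-1}\Psi_0)$ factor by splitting into the regions $\Ifact>1/8$ and $\Ifact\le 1/8$ and invoking the bootstrap assumption \eqref{E:BOOTSTRAPSMALLINTFACTIMPMLIESPSI0ISLARGE} in the latter, then integrate over $[0,t)\subset[0,2\mathring{A}_*^{-1})$ and deduce \eqref{E:PSI0BIGGERTHANMINUSONEHALF} from \eqref{E:PSI0NOTTOONEGATIVE}.

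One cautionary remark: you frame the key bound on $\Ifact^{-1}\weight(\Ifact^{-1}\Psi_0)$ as something ``to be supplied rigorously by Lemma~\ref{L:ESTIMATESINVOLVINGWEIGHT}.'' Be careful here, because that lemma's proof (specifically of \eqref{E:SINGULARFACTORTIMESWEIGHTBOUNDEDBYONE}) relies on Lemma~\ref{L:INTEGRATINGFACTORCRUCIALESTIMATES}, which in turn uses the very estimate \eqref{E:PSI0WELLAPPROXIMATED} you are proving---so citing it would be circular. Fortunately, the direct argument you give in parentheses (using only the \emph{bootstrap assumption} \eqref{E:BOOTSTRAPSMALLINTFACTIMPMLIESPSI0ISLARGE} and the weight hypothesis \eqref{E:ESTIMATEFORDERIVATIVESOFWEIGHT} with $k=0$) is exactly what the paper does and is self-contained; just present that as the actual proof rather than as a sketch of a later lemma.
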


\begin{proof}
	We first prove \eqref{E:PSI0WELLAPPROXIMATED}.
	We will show that
	$\left| \partial_t \Psi_0(t,\underline{x}) \right| \lesssim \varepsilon$.
	Then from this estimate and the fundamental theorem of calculus,
	we obtain the desired bound \eqref{E:PSI0WELLAPPROXIMATED}.
	
	It remains for us to prove the bound
	$\left| \partial_t \Psi_0(t,\underline{x}) \right| \lesssim \varepsilon$.
	We first consider points $(t,\underline{x})$ such that $\Ifact(t,\underline{x}) > 1/8$.
	Then all factors of $\Ifact^{-1}$ in the evolution equation \eqref{E:PARTALTPSI0EVOLUTION} can be bounded
	by $\lesssim 1$. For this reason,
	the desired bound follows as a straightforward consequence of
	equation \eqref{E:PARTALTPSI0EVOLUTION},
	the bootstrap assumptions, 
	the data-size assumptions \eqref{E:DATASIZE},
	and the assumptions of Subsect.\ \ref{SS:WEIGHTASSUMPTIONS} on $\weight$.
	
	We now prove the desired bound at points $(t,\underline{x})$ such that
	$0 < \Ifact(t,\underline{x}) \leq 1/8$.
	From the bootstrap assumption \eqref{E:BOOTSTRAPSMALLINTFACTIMPMLIESPSI0ISLARGE},
	we deduce that $1 \lesssim \Psi_0(t,\underline{x})$ at such points.
	From this bound, the bootstrap assumptions,
	the data-size assumptions \eqref{E:DATASIZE},
	and the assumptions of Subsect.\ \ref{SS:WEIGHTASSUMPTIONS} on $\weight$,
	we deduce the following bound for some factors on RHS~\eqref{E:PARTALTPSI0EVOLUTION}
	at the spacetime points under consideration:
	\[
	\left|\Ifact^{-1} \weight(\Ifact^{-1} \Psi_0)\right| 
	=
	\Psi_0^{-1}
	\left|(\Ifact^{-1} \Psi_0) \weight(\Ifact^{-1} \Psi_0)\right|
	\lesssim 
	\left|(\Ifact^{-1} \Psi_0) \weight(\Ifact^{-1} \Psi_0)\right|
	\lesssim 1.
	\]
	With the help of this bound,
	the desired estimate
	$\left| \partial_t \Psi_0(t,\underline{x}) \right| \lesssim \varepsilon$
	follows as a straightforward consequence of
	equation \eqref{E:PARTALTPSI0EVOLUTION},
	the bootstrap assumptions, 
	the data-size assumptions \eqref{E:DATASIZE},
	and the assumptions of Subsect.\ \ref{SS:WEIGHTASSUMPTIONS} on $\weight$.
	We have therefore proved \eqref{E:PSI0WELLAPPROXIMATED}.
	
	The bound \eqref{E:PSI0BIGGERTHANMINUSONEHALF} then follows
	from \eqref{E:PSI0NOTTOONEGATIVE}
	and
	\eqref{E:PSI0WELLAPPROXIMATED}.
\end{proof}

In the next lemma, we derive sharp estimates for $\Ifact$.
The estimates are important for closing the energy estimates up to the singularity
and for precisely tying the vanishing of $\Ifact$ to the blowup of $\partial_t \Phi$.

\begin{lemma}[\textbf{Crucial estimates for the integrating factor}]
	\label{L:INTEGRATINGFACTORCRUCIALESTIMATES}
	Under the data-size assumptions of Subsect.\ \ref{SS:DATAASSUMPTIONS},
	the bootstrap assumptions of Subsect.\ \ref{SS:BOOTSTRAP},
	and the smallness assumptions of Subsect.\ \ref{SS:SMALLNESSASSUMPTIONS},
	the following estimates hold for $(t,\underline{x}) \in [0,\Tboot) \times \mathbb{R}^3$:
	\begin{subequations}
	\begin{align}  \label{E:IFACTCRUCIALPOINTWISE}
		\Ifact(t,\underline{x})
		& = 1 - t \mathring{\Psi}_0(\underline{x}) + \mathcal{O}(\varepsilon),
			\\
		\Ifact_{\star}(t)
		& = 1 - t \mathring{A}_* + \mathcal{O}(\varepsilon),
		\label{E:IFACTSTARCRUCIALPOINTWISE}
	\end{align}
	\end{subequations}
	where $\mathring{\Psi}_0(\underline{x}) = \Psi_0(0,\underline{x})$
	and $\mathring{A}_* > 0$ is the data-size parameter from Def.~\ref{D:CRUCIALDATASIZEPARAMETER}.
	
	Moreover, the following implications hold for $(t,\underline{x}) \in [0,\Tboot) \times \mathbb{R}^3$:
	\begin{subequations}
	\begin{align} \label{E:SMALLINTFACTIMPMLIESRATIOISLARGE}
		\Ifact(t,\underline{x}) 
		\leq \frac{1}{4} \min \lbrace 1, \mathring{A}_* \rbrace
		\implies
		\frac{\Psi_0(t,\underline{x})}{\Ifact(t,\underline{x})}
		\geq 1,
			\\
		\Ifact(t,\underline{x}) 
		\leq \frac{1}{4}
		\implies
		\Psi_0(t,\underline{x})
		\geq \frac{1}{4} \mathring{A}_*.
		\label{E:SMALLINTFACTIMPMLIESPSI0ISLARGE}
	\end{align}
	\end{subequations}
	
	Finally, the following implications hold for $(t,\underline{x}) \in [0,\Tboot) \times \mathbb{R}^3$:
	\begin{align} \label{E:PSI0NEGATIVEIMPMLIESINHYPERBOLICREGIME}
		\Psi(t,\underline{x}) 
		& \leq 0
		\implies
		\Ifact(t,\underline{x}) 
		\geq 1 - \mathcal{O}(\varepsilon)
		&&
		\mbox{and }
		\Psi(t,\underline{x}) 
		\leq 0
		\implies
		\frac{\Psi_0(t,\underline{x})}{\Ifact(t,\underline{x})}
		\geq - \frac{3}{8}.
	\end{align}
	
\end{lemma}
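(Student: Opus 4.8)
The plan is to first establish the pointwise identity \eqref{E:IFACTCRUCIALPOINTWISE} and then to deduce every remaining claim from it, from the already-proved approximation \eqref{E:PSI0WELLAPPROXIMATED}, and from the lower bound \eqref{E:PSI0BIGGERTHANMINUSONEHALF}. The starting observation is that, because $\partial_t \Phi = \Ifact^{-1}\Psi_0$ on $[0,\Tboot)$ by Def.~\ref{D:RENORMALIZEDSOLUTION} and the bootstrap assumption \eqref{E:HYPERBOLICBOOTSTRAP}, the transport equation \eqref{E:INTEGRATINFACTORODEANDIC} collapses to the simple relation $\partial_t \Ifact = - \Psi_0$. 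Integrating this in time from $\Sigma_0$, using $\Ifact|_{\Sigma_0} = 1$, and substituting $\Psi_0(s,\underline{x}) = \mathring{\Psi}_0(\underline{x}) + \mathcal{O}(\varepsilon)$ from \eqref{E:PSI0WELLAPPROXIMATED} gives $\Ifact(t,\underline{x}) = 1 - t\mathring{\Psi}_0(\underline{x}) + \mathcal{O}(t\varepsilon)$; the bootstrap bound $t < \Tboot \leq 2\mathring{A}_*^{-1}$ together with our convention (Subsect.~\ref{SS:CONVENTIONSFORCONSTANTS}) that constants may depend on $\mathring{A}_*^{-1}$ turns $\mathcal{O}(t\varepsilon)$ into $\mathcal{O}(\varepsilon)$, which is \eqref{E:IFACTCRUCIALPOINTWISE}. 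For \eqref{E:IFACTSTARCRUCIALPOINTWISE} I would take $\min_{\Sigma_t}$ of \eqref{E:IFACTCRUCIALPOINTWISE}: the error is spatially uniform, the minima are attained because the data are compactly supported, and $\mathring{A}_* = \max_{\Sigma_0}\mathring{\Psi}_0 > 0$ by Def.~\ref{D:CRUCIALDATASIZEPARAMETER} and our standing assumption, so $\min_{\underline{x}}(1 - t\mathring{\Psi}_0(\underline{x})) = 1 - t\mathring{A}_*$ for $t \geq 0$.

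Next I would handle the implications \eqref{E:SMALLINTFACTIMPMLIESRATIOISLARGE} and \eqref{E:SMALLINTFACTIMPMLIESPSI0ISLARGE}, both obtained by feeding the hypothesis on $\Ifact$ into \eqref{E:IFACTCRUCIALPOINTWISE}. If $\Ifact(t,\underline{x}) \leq \tfrac14 \min\{1,\mathring{A}_*\}$, then \eqref{E:IFACTCRUCIALPOINTWISE} forces $t\mathring{\Psi}_0(\underline{x}) \geq 1 - \tfrac14\min\{1,\mathring{A}_*\} - \mathcal{O}(\varepsilon) \geq \tfrac34 - \mathcal{O}(\varepsilon)$; in particular $t > 0$ and $\mathring{\Psi}_0(\underline{x}) > 0$, and dividing by $t \leq 2\mathring{A}_*^{-1}$ gives $\mathring{\Psi}_0(\underline{x}) \geq \tfrac38 \mathring{A}_* - \mathcal{O}(\varepsilon)$. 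Combining with \eqref{E:PSI0WELLAPPROXIMATED} and the smallness of $\varepsilon$ relative to $\mathring{A}_*$ yields $\Psi_0 \geq \tfrac14\mathring{A}_*$, which gives \eqref{E:SMALLINTFACTIMPMLIESPSI0ISLARGE} (running the same argument under the weaker hypothesis $\Ifact \leq \tfrac14$). For \eqref{E:SMALLINTFACTIMPMLIESRATIOISLARGE} the naive strategy of bounding $\Psi_0$ and $\Ifact$ separately produces only a lower bound smaller than $1$, so instead I would subtract \eqref{E:IFACTCRUCIALPOINTWISE} from \eqref{E:PSI0WELLAPPROXIMATED} to obtain the identity $\Psi_0(t,\underline{x}) - \Ifact(t,\underline{x}) = (1+t)\mathring{\Psi}_0(\underline{x}) - 1 + \mathcal{O}(\varepsilon)$; inserting the lower bound for $t\mathring{\Psi}_0(\underline{x})$ gives $\Psi_0 - \Ifact \geq \mathring{\Psi}_0(\underline{x}) - \tfrac14\min\{1,\mathring{A}_*\} + \mathcal{O}(\varepsilon)$, which is strictly positive once $\varepsilon$ is small relative to $\mathring{A}_*$ — one checks the cases $\mathring{A}_* \geq 1$ and $\mathring{A}_* < 1$ separately, using $\mathring{\Psi}_0(\underline{x}) \geq \tfrac38\mathring{A}_* - \mathcal{O}(\varepsilon)$. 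Since $\Ifact > 0$ by \eqref{E:HYPERBOLICBOOTSTRAP}, $\Psi_0 > \Ifact > 0$ then gives $\Psi_0/\Ifact \geq 1$.

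Finally, for \eqref{E:PSI0NEGATIVEIMPMLIESINHYPERBOLICREGIME} (whose hypothesis should read $\Psi_0(t,\underline{x}) \leq 0$), the approximation \eqref{E:PSI0WELLAPPROXIMATED} immediately gives $\mathring{\Psi}_0(\underline{x}) \leq \mathcal{O}(\varepsilon)$; substituting into \eqref{E:IFACTCRUCIALPOINTWISE} and again using $t \leq 2\mathring{A}_*^{-1}$ yields $\Ifact(t,\underline{x}) \geq 1 - \mathcal{O}(\varepsilon)$. Then for $\varepsilon$ sufficiently small $\Ifact(t,\underline{x}) \geq \tfrac{15}{16}$, and combining with $\Psi_0(t,\underline{x}) \geq -\tfrac{5}{16}$ from \eqref{E:PSI0BIGGERTHANMINUSONEHALF} gives $\Psi_0/\Ifact \geq -(5/16)/(15/16) = -1/3 \geq -3/8$.

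The argument is elementary and I do not anticipate a genuine obstacle. The only place that needs real care is \eqref{E:SMALLINTFACTIMPMLIESRATIOISLARGE}: estimating numerator and denominator independently would lose the sharp constant, and one must instead exploit the cancellation in $(1+t)\mathring{\Psi}_0(\underline{x}) - 1$. Throughout, one has to keep precise track of which smallness of $\varepsilon$ (relative to $1$, to $\mathring{A}$, or to $\mathring{A}_*$) is being invoked, consistent with Subsect.~\ref{SS:SMALLNESSASSUMPTIONS}.
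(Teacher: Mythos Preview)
Your proposal is correct and follows essentially the same approach as the paper: integrate $\partial_t \Ifact = -\Psi_0$ using \eqref{E:PSI0WELLAPPROXIMATED} to get \eqref{E:IFACTCRUCIALPOINTWISE}, take the minimum for \eqref{E:IFACTSTARCRUCIALPOINTWISE}, and read off the remaining implications. The only minor difference is in \eqref{E:SMALLINTFACTIMPMLIESRATIOISLARGE}: the paper first combines \eqref{E:IFACTCRUCIALPOINTWISE} and \eqref{E:PSI0WELLAPPROXIMATED} into the single relation $\Ifact = 1 - t\Psi_0 + \mathcal{O}(\varepsilon)$ (with $\Psi_0$ at time $t$), and then in each of the two cases $\mathring{A}_* \gtrless 1$ bounds $\Psi_0$ below and compares directly with the assumed upper bound on $\Ifact$, rather than computing $\Psi_0 - \Ifact$ as you do; both routes are equivalent elementary manipulations.
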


\begin{remark}[\textbf{Improvement of a bootstrap assumption}]
	\label{R:IMPROVEMENTOFANUNUSUALBOOTSTRAPASSUMPTION}
	Note in particular that 
	the estimate \eqref{E:SMALLINTFACTIMPMLIESPSI0ISLARGE}
	provides a strict improvement of the bootstrap assumption
	\eqref{E:BOOTSTRAPSMALLINTFACTIMPMLIESPSI0ISLARGE}.
\end{remark}

\begin{remark}[\textbf{The significance of} \eqref{E:PSI0NEGATIVEIMPMLIESINHYPERBOLICREGIME}]
	\label{R:SOLUTIONREMAINSINSIDEREGIMEOFHYPERBOLICITITY}
	Note that
	\eqref{E:PSI0NEGATIVEIMPMLIESINHYPERBOLICREGIME}
	is a strict improvement of the bootstrap assumption
	\eqref{E:BOOTSTRAPRATIO} 
	and implies that $-3/8 \leq \partial_t \Phi(t,\underline{x})$ 
	for
	$(t,\underline{x}) \in [0,\Tboot) \times \mathbb{R}^3$.
	In view of the assumption \eqref{E:WEIGHTISPOSITIVE} for $\weight$,
	we conclude that the solution never escapes the set of state-space
	values for which the wave equation \eqref{E:WAVE} is hyperbolic.
	In the rest of article, we often silently use this fact.
\end{remark}

\begin{proof}
	From equation \eqref{E:INTEGRATINFACTORODEANDIC} and the estimate \eqref{E:PSI0WELLAPPROXIMATED},
	we deduce
	$\partial_t \Ifact(t,\underline{x}) = - \mathring{\Psi}_0(\underline{x}) + \mathcal{O}(\varepsilon)$.
	Integrating in time and using the initial condition \eqref{E:INTEGRATINFACTORODEANDIC}, we
	find that $\Ifact(t,\underline{x}) = 1 - t \mathring{\Psi}_0(\underline{x}) + \mathcal{O}(\varepsilon)$,
	which is \eqref{E:IFACTCRUCIALPOINTWISE}.
	
	\eqref{E:IFACTSTARCRUCIALPOINTWISE} follows a simple consequence of \eqref{E:IFACTCRUCIALPOINTWISE}
	and Def.~\ref{D:CRUCIALDATASIZEPARAMETER}.
	
	To prove \eqref{E:SMALLINTFACTIMPMLIESRATIOISLARGE}, we
	first consider the case $\mathring{A}_* \geq 1$.
	From
	\eqref{E:IFACTCRUCIALPOINTWISE} and
	\eqref{E:PSI0WELLAPPROXIMATED},
	we deduce that
	$\Ifact(t,\underline{x}) = 1 - t \Psi_0(t,\underline{x}) + \mathcal{O}(\varepsilon)$.
	It follows that if $\Ifact(t,\underline{x}) \leq 1/4$,
	then $t \Psi_0(t,\underline{x}) \geq 1/2$.
	Since $0 \leq t \leq 2 \mathring{A}_*^{-1} \leq 2$,
	we deduce that
	$
	\frac{\Psi_0(t,\underline{x})}{\Ifact(t,\underline{x})}
	\geq 1  
	$,
	which is the desired conclusion.
	Next, we consider the case $\mathring{A}_* < 1$.
	Using \eqref{E:IFACTCRUCIALPOINTWISE} and
	\eqref{E:PSI0WELLAPPROXIMATED},
	we deduce that
	$\Ifact(t,\underline{x}) = 1 - t \Psi_0(t,\underline{x}) + \mathcal{O}(\varepsilon)$.
	It follows that if $\Ifact(t,\underline{x}) \leq (1/4) \mathring{A}_*$,
	then $t \Psi_0(t,\underline{x}) \geq 1 - (1/2) \mathring{A}_*$.
	Since $0 \leq t \leq 2 \mathring{A}_*^{-1}$,
	we deduce that
	$
	\frac{\Psi_0(t,\underline{x})}{\Ifact(t,\underline{x})}
	\geq 2 \left\lbrace 1 - (1/2) \mathring{A}_* \right\rbrace
	= 2 - \mathring{A}_*
	$, 
	which, in view of our assumption $\mathring{A}_* < 1$,
	is
	$
	> 1
	$.
	This completes our proof of \eqref{E:SMALLINTFACTIMPMLIESRATIOISLARGE}.
	
	The implication \eqref{E:SMALLINTFACTIMPMLIESPSI0ISLARGE}
	can be proved using arguments similar to the ones that we used to prove
	\eqref{E:SMALLINTFACTIMPMLIESRATIOISLARGE}, and we therefore omit the details.
	
	Next, we note that when $\Psi_0(t,\underline{x}) \leq 0$,
	the estimate
	$\Ifact(t,\underline{x}) = 1 - t \Psi_0(t,\underline{x}) + \mathcal{O}(\varepsilon)$
	proved above implies that $\Ifact(t,\underline{x}) \geq 1 - \mathcal{O}(\varepsilon)$,
	which yields the first implication stated in
	\eqref{E:PSI0NEGATIVEIMPMLIESINHYPERBOLICREGIME}.
	To obtain the second implication stated in
	\eqref{E:PSI0NEGATIVEIMPMLIESINHYPERBOLICREGIME},
	we use the first implication and the estimate
	\eqref{E:PSI0BIGGERTHANMINUSONEHALF}.
\end{proof}

In the next lemma, we derive some simple pointwise estimates showing
the spatial derivatives of $\Ifact$ up to top order can be controlled in terms
of the spatial derivatives of $\lbrace \Psi_a \rbrace_{a=1,2,3}$.

\begin{lemma}[\textbf{Estimates for the derivatives of the integrating factor}]
	\label{L:POINTWISEESTIMATEFORDERIVATIVESOFIFACT}
	Under the data-size assumptions of Subsect.\ \ref{SS:DATAASSUMPTIONS},
	the bootstrap assumptions of Subsect.\ \ref{SS:BOOTSTRAP},
	and the smallness assumptions of Subsect.\ \ref{SS:SMALLNESSASSUMPTIONS},
	the following pointwise estimates hold
	for $(t,\underline{x}) \in [0,T_{(Boot)}) \times \mathbb{R}^3$:
	\begin{subequations}
	\begin{align} 
		\left|
			\nabla \Ifact
		\right|
		& \lesssim
			\sum_{a=1}^3
			\left|
				\Psi_a
			\right|
			+ 
			\sum_{a=1}^3
			\left|
			 \mathring{\Psi}_a
			\right|.
				\label{E:LOWESTLEVELPOINTWISEESTIMATEFORDERIVATIVESOFIFACT} 
	\end{align}
	
	Moreover, for $2 \leq k \leq 6$, the following estimate holds:
	\begin{align}
		\left|
			\nabla^k \Ifact
		\right|
		& \lesssim 
			\sum_{a=1}^3
			\left|
				\nabla^{[1,k-1]} \Psi_a
			\right|
			+ 
			\sum_{a=1}^3
			\left|
				\nabla^{[1,k-1]} \mathring{\Psi}_a
			\right|
			+ 
			\varepsilon
			\sum_{a=1}^3
			\left|
			 \mathring{\Psi}_a
			\right|.
			\label{E:POINTWISEESTIMATEFORDERIVATIVESOFIFACT}
	\end{align}
	\end{subequations}
	
	Finally, the following estimate holds for $t \in [0,T_{(Boot)})$:
	\begin{align} \label{E:LINFINITYFORDERIVATIVESOFIFACT}
		\left\|
			\nabla^{[1,3]} \Ifact
		\right\|_{L^{\infty}(\Sigma_t)}
		& \lesssim 
			\varepsilon.
	\end{align}
\end{lemma}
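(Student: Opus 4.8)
The plan is to build the three estimates directly from the identity \eqref{E:FORMULAFORDERIVATIVESOFINTEGRATINGFACTOR}, $\partial_i \Ifact = - \Psi_i + \Ifact \mathring{\Psi}_i$, which is the key structural input: it expresses $\nabla \Ifact$ with \emph{no} factors of $\Ifact^{-1}$, so all estimates for $\Ifact$ will be non-singular. First I would prove \eqref{E:LOWESTLEVELPOINTWISEESTIMATEFORDERIVATIVESOFIFACT}: take absolute values in \eqref{E:FORMULAFORDERIVATIVESOFINTEGRATINGFACTOR} and bound $|\Ifact|$ by the bootstrap assumption \eqref{E:IFACTITSELFBOOTSTRAP}, which gives $|\Ifact| \lesssim 1$ (recalling our convention from Subsect.\ \ref{SS:CONVENTIONSFORCONSTANTS} that constants may depend on $\mathring{A}$ and $\mathring{A}_*^{-1}$); this immediately yields $|\nabla \Ifact| \lesssim \sum_a |\Psi_a| + \sum_a |\mathring{\Psi}_a|$.

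Next, for \eqref{E:POINTWISEESTIMATEFORDERIVATIVESOFIFACT} with $2 \le k \le 6$, I would apply $\nabla^{k-1}$ to \eqref{E:FORMULAFORDERIVATIVESOFINTEGRATINGFACTOR} and expand $\nabla^{k-1}(\Ifact \mathring{\Psi}_i)$ via the Leibniz rule. The term where all derivatives hit $\mathring{\Psi}_i$ produces $\Ifact \nabla^{k-1}\mathring{\Psi}_i$, bounded by $\lesssim |\nabla^{[1,k-1]}\mathring{\Psi}_i|$ using $|\Ifact|\lesssim 1$; the term where all derivatives hit $\Ifact$ produces $(\nabla^{k-1}\Ifact)\mathring{\Psi}_i$, and here I use the $L^\infty$ smallness of $\mathring{\Psi}_i$ (from the data assumption \eqref{E:DATASIZE}, $\|\mathring{\Psi}_i\|_{L^\infty}\lesssim\mathring{\upepsilon}\le\varepsilon$) to produce the $\varepsilon \sum_a |\mathring{\Psi}_a|$ contribution after absorbing $\nabla^{k-1}\Ifact$ --- but this is circular unless handled by induction. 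So the clean route is induction on $k$: having \eqref{E:LOWESTLEVELPOINTWISEESTIMATEFORDERIVATIVESOFIFACT} as the base case, assume the bound through order $k-1$; then in $\nabla^{k-1}(\Ifact\mathring{\Psi}_i)$ the ``all-on-$\Ifact$'' term is $(\nabla^{k-1}\Ifact)\mathring{\Psi}_i$, which by the inductive hypothesis (for $k-1\ge 1$) is $\lesssim \big(\sum_a|\nabla^{[1,k-2]}\Psi_a| + \sum_a|\nabla^{[1,k-2]}\mathring{\Psi}_a| + \varepsilon\sum_a|\mathring{\Psi}_a|\big)|\mathring{\Psi}_i|$, and multiplying by $\|\mathring{\Psi}_i\|_{L^\infty}\lesssim\varepsilon$ keeps everything within the claimed bound; the mixed Leibniz terms, each a product of a lower-order $\nabla\Ifact$-type factor and a $\nabla\mathring{\Psi}_i$-type factor, are handled the same way, using that at least one factor carries an $L^\infty$-small derivative of the data. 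Note the range $k\le 6$ matches the Sobolev regularity $H^5$ of $\mathring{\Psi}_i$ from \eqref{E:DATASIZE} combined with $\|\nabla^{\le 2}\mathring{\Psi}_i\|_{L^\infty}$; pointwise this is fine since we keep data derivatives symbolic on the right-hand side.

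Finally, \eqref{E:LINFINITYFORDERIVATIVESOFIFACT} follows by taking $\|\cdot\|_{L^\infty(\Sigma_t)}$ in \eqref{E:LOWESTLEVELPOINTWISEESTIMATEFORDERIVATIVESOFIFACT} and in \eqref{E:POINTWISEESTIMATEFORDERIVATIVESOFIFACT} for $k=2,3$: the $\Psi_a$-terms are controlled by $\|\nabla^{\le 2}\Psi_a\|_{L^\infty}\lesssim\varepsilon$ from the bootstrap assumption \eqref{E:PSIIANDDERIVATIVESBOOTSTRAP}, the $\mathring{\Psi}_a$-terms by $\|\nabla^{\le 2}\mathring{\Psi}_a\|_{L^\infty}\lesssim\mathring{\upepsilon}\le\varepsilon$ from \eqref{E:DATASIZE}, and the residual $\varepsilon\sum_a|\mathring{\Psi}_a|$ term is $\lesssim\varepsilon\mathring{\upepsilon}\lesssim\varepsilon$. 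The main (mild) obstacle is bookkeeping in the inductive step: one must check that every Leibniz term in $\nabla^{k-1}(\Ifact\mathring{\Psi}_i)$ other than $\Ifact\nabla^{k-1}\mathring{\Psi}_i$ contains at least one differentiated factor of the small data $\mathring{\Psi}_i$, so that the inductive hypothesis plus the smallness $\|\mathring{\Psi}_i\|_{L^\infty}\lesssim\varepsilon$ closes without degrading the estimate --- there is no analytic difficulty here, only the need to organize the combinatorics so the $\varepsilon$-weight lands in the right place.
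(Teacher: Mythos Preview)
Your proposal is correct and follows essentially the same approach as the paper: both use the identity \eqref{E:FORMULAFORDERIVATIVESOFINTEGRATINGFACTOR} together with the bootstrap assumption \eqref{E:IFACTITSELFBOOTSTRAP} for \eqref{E:LOWESTLEVELPOINTWISEESTIMATEFORDERIVATIVESOFIFACT}, then induction in $k$ via the Leibniz rule (with the data-size assumptions \eqref{E:DATASIZE} and \eqref{E:DATAEPSILONVSBOOTSTRAPEPSILON}) for \eqref{E:POINTWISEESTIMATEFORDERIVATIVESOFIFACT}, and finally combine these with the bootstrap and data bounds to get \eqref{E:LINFINITYFORDERIVATIVESOFIFACT}. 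One small bookkeeping point: in the base case $k=2$, your treatment of $(\nabla\Ifact)\mathring{\Psi}_i$ produces a stray $\varepsilon\sum_a|\Psi_a|$ term not present on the right-hand side of \eqref{E:POINTWISEESTIMATEFORDERIVATIVESOFIFACT}; the clean fix is to instead bound $|\nabla\Ifact|\lesssim\varepsilon$ (via \eqref{E:LOWESTLEVELPOINTWISEESTIMATEFORDERIVATIVESOFIFACT}, the bootstrap \eqref{E:PSIIANDDERIVATIVESBOOTSTRAP}, and the data assumption) and keep the undifferentiated $\mathring{\Psi}_i$ factor intact, which lands the term directly in $\varepsilon\sum_a|\mathring{\Psi}_a|$.
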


\begin{proof}
	The estimate \eqref{E:LOWESTLEVELPOINTWISEESTIMATEFORDERIVATIVESOFIFACT}
	is straightforward consequence
	of equation \eqref{E:FORMULAFORDERIVATIVESOFINTEGRATINGFACTOR}
	and the bootstrap assumptions.
	Similarly, the estimate \eqref{E:POINTWISEESTIMATEFORDERIVATIVESOFIFACT}
	is straightforward to derive via induction in $k$ with the help
	of equation \eqref{E:FORMULAFORDERIVATIVESOFINTEGRATINGFACTOR},
	the bootstrap assumptions, the data-size assumptions \eqref{E:DATASIZE},
	and \eqref{E:DATAEPSILONVSBOOTSTRAPEPSILON}.
	\eqref{E:LINFINITYFORDERIVATIVESOFIFACT} then follows from
	\eqref{E:LOWESTLEVELPOINTWISEESTIMATEFORDERIVATIVESOFIFACT}-\eqref{E:POINTWISEESTIMATEFORDERIVATIVESOFIFACT},
	the bootstrap assumptions, the data-size assumptions \eqref{E:DATASIZE},
	and \eqref{E:DATAEPSILONVSBOOTSTRAPEPSILON}.
\end{proof}

\subsection{Pointwise estimates involving the weight}
\label{SS:POINTWISEWEIGHT}
In the next lemma, we derive precise pointwise estimates
for quantities that involve the weight function
$\weight$. The detailed information is important for
closing the energy estimates 
and for showing that the spatial derivatives of $\weight = \weight(\partial_t \Phi)$ 
are controllable. Some of the analysis is delicate in that
$\partial_t \Phi$ and its derivatives 
are allowed to be arbitrarily large
(i.e., the estimates hold uniformly, arbitrarily close to the expected singularity).

\begin{lemma}[\textbf{Pointwise estimates involving the weight} $\weight$]
\label{L:ESTIMATESINVOLVINGWEIGHT}
	Let $\mathbf{1}_{\left\lbrace 0 < \Ifact \leq (1/4) \min \lbrace 1, \mathring{A}_* \rbrace \right\rbrace}$
	be the characteristic function of the spacetime subset
	$
	\lbrace
		(t,\underline{x})
		\ | \
		0 < \Ifact (t,\underline{x}) 
		\leq (1/4) \min \lbrace 1, \mathring{A}_* \rbrace
	\rbrace
	$.
	Under the data-size assumptions of Subsect.\ \ref{SS:DATAASSUMPTIONS},
	the bootstrap assumptions of Subsect.\ \ref{SS:BOOTSTRAP},
	and the smallness assumptions of Subsect.\ \ref{SS:SMALLNESSASSUMPTIONS},
	the following pointwise estimates hold
	for $(t,\underline{x}) \in [0,T_{(Boot)}) \times \mathbb{R}^3$:
\begin{subequations}
\begin{align}
	\weight(\Ifact^{-1} \Psi_0)
	& \lesssim 1,
		\label{E:WEIGHTBOUNDEDBYONE}
			\\
	\left|
		\nabla
		\left\lbrace
			\weight(\Ifact^{-1} \Psi_0)
		\right\rbrace
	\right|
	& \lesssim 
		\varepsilon
		\mathbf{1}_{\left\lbrace 0 < \Ifact \leq (1/4) \min \lbrace 1, \mathring{A}_* \rbrace \right\rbrace}
		\left\lbrace
			\Ifact^{-2}  
			\left|
				\weight'(\Ifact^{-1} \Psi_0)
			\right|
		\right\rbrace^{1/2}
		+
		\varepsilon
		\left\lbrace
			\weight(\Ifact^{-1} \Psi_0)
		\right\rbrace^{1/2}
		\label{E:WEIGHTEXACTLYONEDERIVATIVEPOINTWISE}	\\
	& \lesssim 
	\varepsilon.
		\label{E:WEIGHTEXACTLYONEDERIVATIVEPOINTWISESMALL} 
\end{align}
\end{subequations}

In addition, for $2 \leq k \leq 5$, the following estimates hold:
\begin{align}
	\left|
		\nabla^k
		\left\lbrace
			\weight(\Ifact^{-1} \Psi_0)
		\right\rbrace
	\right|
	& \lesssim 
		\left|
			\nabla^{[1,k]} \Psi_0
		\right|
		+
		\sum_{a=1}^3
		\left|
			\nabla^{\leq k-1} \Psi_a
		\right|
		+
		\sum_{a=1}^3
		\left|
			\nabla^{\leq k-1} \mathring{\Psi}_a
		\right|.
		\label{E:WEIGHTATLEASTONEDERIVATIVEPOINTWISE}	
	\end{align}

	Furthermore, the following estimates hold:
	\begin{subequations}
	\begin{align}
		\Ifact^{-1}
		\weight(\Ifact^{-1} \Psi_0)
	& \lesssim 
	\mathbf{1}_{\left\lbrace 0 < \Ifact \leq (1/4) \min \lbrace 1, \mathring{A}_* \rbrace \right\rbrace}
	\left\lbrace
	\Ifact^{-2} 
	\left|
		\weight'(\Ifact^{-1} \Psi_0)
	\right|
	\right\rbrace^{1/2}
	+
	\left\lbrace
		\weight(\Ifact^{-1} \Psi_0)
	\right\rbrace^{1/2}
	\label{E:SINGULARFACTORTIMESWEIGHTPOINTWISE}	
			\\
	& \lesssim 1.
		\label{E:SINGULARFACTORTIMESWEIGHTBOUNDEDBYONE}  
	\end{align}
	\end{subequations}
	
	Moreover, 
	for $1 \leq k \leq 5$,
	the following estimates hold:
	\begin{align}
	\left|
		\nabla^k
		\left\lbrace
			\Ifact^{-1}
			\weight(\Ifact^{-1} \Psi_0)
		\right\rbrace
	\right|
	& \lesssim 
		\left|
			\nabla^{[1,k]} \Psi_0
		\right|
		+
		\sum_{a=1}^3
		\left|
			\nabla^{\leq k-1} \Psi_a
		\right|
		+
		\sum_{a=1}^3
		\left|
			\nabla^{\leq k-1} \mathring{\Psi}_a
		\right|.
		\label{E:ATLEASTONEDERIVATIVESINGULARFACTORTIMESWEIGHTPOINTWISE}
\end{align}

Finally, for $P \in [0,2]$, the following estimates hold:
\begin{subequations}
\begin{align} 
		\left|
		\Ifact^{-2}
		\weight'(\Ifact^{-1} \Psi_0)
		+
		\mathbf{1}_{\left\lbrace 0 < \Ifact \leq (1/4) \min \lbrace 1, \mathring{A}_* \rbrace \right\rbrace}
		\Ifact^{-2}
			\left|
				\weight'(\Ifact^{-1} \Psi_0)
			\right|
		\right|
		&
		\lesssim
		\weight(\Ifact^{-1} \Psi_0),
		\label{E:POINTWISEESTIMATESIFACTMINUSTWOTIMESWEIGHTDERIVATIVE}	\\
	\left|
		\Ifact^{-P}
		\weight'(\Ifact^{-1} \Psi_0)
	\right|
	& \lesssim 1.
		\label{E:BOUNDEDBYONEPOINTWISEESTIMATESIFACTMINUSTWOTIMESWEIGHTDERIVATIVE}
\end{align}
\end{subequations}

\end{lemma}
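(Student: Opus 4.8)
The plan is to treat separately the ``large'' region $\mathcal{L} := \{(t,\underline{x}) \in [0,\Tboot)\times\mathbb{R}^3 : \Ifact > \tfrac{1}{4}\min\{1,\mathring{A}_*\}\}$ and the ``small'' region $\mathcal{S} := \{(t,\underline{x}) \in [0,\Tboot)\times\mathbb{R}^3 : 0 < \Ifact \le \tfrac{1}{4}\min\{1,\mathring{A}_*\}\}$, on which the characteristic function appearing in the statement equals $0$ and $1$ respectively. The organizing idea is to pass to the bounded variable
\[
z := -\big(1 + \Ifact^{-1}\Psi_0\big)^{-1} = -\frac{\Ifact}{\Ifact + \Psi_0},
\]
which everywhere satisfies $z \in (-\tfrac{8}{5},0)$ because $1 + \Ifact^{-1}\Psi_0 \ge \tfrac{5}{8}$ by the improved bound \eqref{E:PSI0NEGATIVEIMPMLIESINHYPERBOLICREGIME} together with $\Ifact > 0$. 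Writing $g(y) := (1 + y)\weight(y)$ and noting that the substitution $z = -(1+y)^{-1}$ turns the operator $(1+y)^2 \frac{d}{dy}$ into $\frac{d}{dz}$, the hypotheses \eqref{E:ESTIMATEFORDERIVATIVESOFWEIGHT} assert precisely that $g$, viewed as a function of $z$, is $C^5$ with all derivatives of order $\le 5$ bounded on $(-2,0)$, and one has the exact identities
\[
\weight(\Ifact^{-1}\Psi_0) = -z\,g(z), \qquad \Ifact^{-1}\weight(\Ifact^{-1}\Psi_0) = \frac{g(z)}{\Ifact + \Psi_0}, \qquad \weight'(\Ifact^{-1}\Psi_0) = -z^2\Big(g(z) + z\,\tfrac{dg}{dz}\Big).
\]
The crucial gain is that the dangerous factor $\Ifact + \Psi_0$ in the denominators is \emph{bounded below by a positive constant}: on $\mathcal{S}$, $\Ifact + \Psi_0 \ge \Psi_0 \ge \tfrac{1}{4}\mathring{A}_*$ by \eqref{E:SMALLINTFACTIMPMLIESPSI0ISLARGE}, and on $\mathcal{L}$, $\Ifact + \Psi_0 = \Ifact(1 + \Ifact^{-1}\Psi_0) \ge \tfrac{5}{8}\cdot\tfrac{1}{4}\min\{1,\mathring{A}_*\}$; moreover on $\mathcal{L}$ the quantity $\Ifact^{-1}$ is itself bounded and $\Ifact^{-1}\Psi_0$ ranges over a fixed compact subinterval of $(-\tfrac12,\infty)$ (namely $[-\tfrac38, 4(\mathring{A}+\varepsilon)\min\{1,\mathring{A}_*\}^{-1}]$, by \eqref{E:PSI0ITSELFBOOTSTRAP}), so that $\weight$ and its first five derivatives are bounded above there and $\weight$ is bounded below by a positive constant.

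Granting all this, I would first dispose of the estimates not involving the characteristic function, uniformly on $\mathcal{L} \cup \mathcal{S}$. The bounds \eqref{E:WEIGHTBOUNDEDBYONE}, \eqref{E:SINGULARFACTORTIMESWEIGHTBOUNDEDBYONE}, and \eqref{E:BOUNDEDBYONEPOINTWISEESTIMATESIFACTMINUSTWOTIMESWEIGHTDERIVATIVE} are immediate from the three displayed identities, the bound $|z| \le \tfrac{8}{5}$, the boundedness of $g$ and $\tfrac{dg}{dz}$, the bound $(\Ifact+\Psi_0)^{-1} \lesssim 1$, and — for \eqref{E:BOUNDEDBYONEPOINTWISEESTIMATESIFACTMINUSTWOTIMESWEIGHTDERIVATIVE} — the rewriting $\Ifact^{-P}|\weight'(\Ifact^{-1}\Psi_0)| = \Ifact^{2-P}(\Ifact+\Psi_0)^{-2}|g(z) + z\tfrac{dg}{dz}|$ with $\Ifact \lesssim 1$ (bootstrap \eqref{E:IFACTITSELFBOOTSTRAP}) and $2 - P \ge 0$. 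For the higher-order estimates \eqref{E:WEIGHTATLEASTONEDERIVATIVEPOINTWISE} and \eqref{E:ATLEASTONEDERIVATIVESINGULARFACTORTIMESWEIGHTPOINTWISE}, I would apply the Leibniz and chain rules to $-z\,g(z)$ and to $g(z)(\Ifact+\Psi_0)^{-1}$; each resulting term is a product of factors of the form $\tfrac{d^l g}{dz^l}(z)$ ($l \le 5$, bounded), $(\Ifact+\Psi_0)^{-l}$ ($l \le 6$, bounded), $z$, $\Ifact$, $\Psi_0$, and Cartesian spatial derivatives of $z$ (hence, via the quotient rule and \eqref{E:FORMULAFORDERIVATIVESOFINTEGRATINGFACTOR}, of $\Ifact$ and $\Psi_0$), with a total of at most $k$ derivatives in the last group. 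Using \eqref{E:FORMULAFORDERIVATIVESOFINTEGRATINGFACTOR} and Lemma~\ref{L:POINTWISEESTIMATEFORDERIVATIVESOFIFACT} to express the derivatives of $\Ifact$ through $\Psi_a, \mathring{\Psi}_a$ and their derivatives of one order less, the bootstrap assumptions of Subsect.~\ref{SS:BOOTSTRAP} to bound all but one derivative factor by $\lesssim 1$ and to absorb any product of two or more small factors into $\varepsilon$ times a single one, and retaining the remaining factor, one lands exactly on the right-hand sides claimed: the $|\nabla^{[1,k]}\Psi_0|$ term when the retained derivative falls on $\Psi_0$, and the $\sum_a |\nabla^{\le k-1}\Psi_a|$ and $\sum_a |\nabla^{\le k-1}\mathring{\Psi}_a|$ terms when it falls on a $\Psi_a$ or $\mathring{\Psi}_a$ produced by differentiating $\Ifact$. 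The estimate \eqref{E:WEIGHTEXACTLYONEDERIVATIVEPOINTWISESMALL} then follows from \eqref{E:WEIGHTEXACTLYONEDERIVATIVEPOINTWISE}, which I turn to next, together with $\Ifact^{-2}|\weight'| \lesssim 1$ and $\weight \lesssim 1$.

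For the estimates involving the characteristic function I would use the region decomposition and the fact that on $\mathcal{S}$ one additionally has $\Ifact^{-1}\Psi_0 \ge 1$ by \eqref{E:SMALLINTFACTIMPMLIESRATIOISLARGE}, so that \eqref{E:WEIGHTPRIMEISNEGATIVE} gives $\weight'(\Ifact^{-1}\Psi_0) < 0$ and \eqref{E:WEIGHTVSWEIGHTDERIVATIVECOMPARISON} gives $\weight(\Ifact^{-1}\Psi_0) \le \upalpha |\weight'(\Ifact^{-1}\Psi_0)|^{1/2}$. Then \eqref{E:POINTWISEESTIMATESIFACTMINUSTWOTIMESWEIGHTDERIVATIVE} is immediate: on $\mathcal{S}$ the characteristic function is $1$ and $\weight' < 0$, so the bracketed quantity is $\Ifact^{-2}\weight' + \Ifact^{-2}|\weight'| = 0$; on $\mathcal{L}$ the characteristic function is $0$ and $|\Ifact^{-2}\weight'(\Ifact^{-1}\Psi_0)| \lesssim 1 \lesssim \weight(\Ifact^{-1}\Psi_0)$ since $\weight$ is bounded below there. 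For \eqref{E:SINGULARFACTORTIMESWEIGHTPOINTWISE}: on $\mathcal{S}$, $\Ifact^{-1}\weight(\Ifact^{-1}\Psi_0) \le \upalpha \{\Ifact^{-2}|\weight'(\Ifact^{-1}\Psi_0)|\}^{1/2}$ by the comparison just noted; on $\mathcal{L}$, $\Ifact^{-1}\weight(\Ifact^{-1}\Psi_0) \lesssim \weight(\Ifact^{-1}\Psi_0) \lesssim \{\weight(\Ifact^{-1}\Psi_0)\}^{1/2}$. Finally, for \eqref{E:WEIGHTEXACTLYONEDERIVATIVEPOINTWISE} I would use $\nabla\{\weight(\Ifact^{-1}\Psi_0)\} = -\big(g(z) + z\tfrac{dg}{dz}\big)\,\nabla z$ together with the quotient-rule identity $\partial_a z = (\Ifact+\Psi_0)^{-2}(\Psi_0\Psi_a - \Ifact\Psi_0\mathring{\Psi}_a + \Ifact\partial_a\Psi_0)$ — which follows from \eqref{E:FORMULAFORDERIVATIVESOFINTEGRATINGFACTOR} — and which yields $(\Ifact+\Psi_0)\,|\nabla z| \lesssim |\Psi_a| + |\mathring{\Psi}_a| + |\nabla\Psi_0| \lesssim \varepsilon$ on both regions; combined with $\Ifact^{-2}|\weight'(\Ifact^{-1}\Psi_0)| = |g(z) + z\tfrac{dg}{dz}|(\Ifact+\Psi_0)^{-2}$ this gives, on $\mathcal{S}$, $|\nabla\{\weight(\Ifact^{-1}\Psi_0)\}| \lesssim [(\Ifact+\Psi_0)|\nabla z|]\{\Ifact^{-2}|\weight'(\Ifact^{-1}\Psi_0)|\}^{1/2} \lesssim \varepsilon \{\Ifact^{-2}|\weight'(\Ifact^{-1}\Psi_0)|\}^{1/2}$, while on $\mathcal{L}$, where $\Ifact^{-1} \lesssim 1$ and $\weight \gtrsim 1$, one gets $\lesssim \varepsilon \lesssim \varepsilon\{\weight(\Ifact^{-1}\Psi_0)\}^{1/2}$.

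The hard part will be the bookkeeping underlying \eqref{E:WEIGHTATLEASTONEDERIVATIVEPOINTWISE} and \eqref{E:ATLEASTONEDERIVATIVESINGULARFACTORTIMESWEIGHTPOINTWISE} on the region $\mathcal{S}$, i.e.\ verifying that after reorganizing everything through $z$ and $\Ifact + \Psi_0$ no dangerous negative power of $\Ifact$ survives: one must check that the Leibniz and chain rule expansions of $\nabla^k\{-z\,g(z)\}$ and of $\nabla^k\{g(z)(\Ifact+\Psi_0)^{-1}\}$ really do contain only the admissible factors listed above, with the derivatives distributed so as to produce $|\nabla^{[1,k]}\Psi_0|$, $\sum_a |\nabla^{\le k-1}\Psi_a|$, $\sum_a |\nabla^{\le k-1}\mathring{\Psi}_a|$ and nothing worse; this is the point the introduction singles out as the article's most technical analysis, and it is the reason the weight hypotheses \eqref{E:ESTIMATEFORDERIVATIVESOFWEIGHT} are phrased with the operator $(1+y)^2\frac{d}{dy}$ rather than with plain derivatives — the phrasing is calibrated exactly so that passing to the bounded variable $z$ converts those hypotheses into honest bounded derivatives, while the $(\Ifact+\Psi_0)$-denominators generated by repeatedly differentiating $z$ are absorbed by the lower bound $\Ifact + \Psi_0 \gtrsim \mathring{A}_*$ that holds precisely where $\Ifact$ is small. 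Everything else reduces to the bootstrap assumptions, the identity \eqref{E:FORMULAFORDERIVATIVESOFINTEGRATINGFACTOR}, Lemma~\ref{L:POINTWISEESTIMATEFORDERIVATIVESOFIFACT}, and elementary estimates on the compact regime governing $\mathcal{L}$.
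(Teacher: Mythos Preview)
Your proof is correct and follows the same overall architecture as the paper: decompose into the region where $\Ifact$ is bounded below (your $\mathcal{L}$) and the region where $\Ifact$ is small (your $\mathcal{S}$), use \eqref{E:SMALLINTFACTIMPMLIESRATIOISLARGE}--\eqref{E:SMALLINTFACTIMPMLIESPSI0ISLARGE} on the latter, and pass to a bounded auxiliary variable so that the hypothesis \eqref{E:ESTIMATEFORDERIVATIVESOFWEIGHT} becomes a statement about bounded ordinary derivatives.

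The one genuine difference is the choice of auxiliary variable. The paper introduces $D_Y := y^2\frac{d}{dy}$ and, on the small region, works with $y^{-1} = \Ifact/\Psi_0$, using \eqref{E:SMALLINTFACTIMPMLIESPSI0ISLARGE} to bound $\Psi_0^{-1}$; it then has to argue (with a footnote) that $|D_Y^n\weight|$ and $|D_Y^n(y\weight)|$ are $\lesssim 1$, which is one step removed from the hypothesis \eqref{E:ESTIMATEFORDERIVATIVESOFWEIGHT} since that hypothesis involves $(1+y)^2\frac{d}{dy}$ acting on $(1+y)\weight$. Your choice $z = -(1+y)^{-1}$ is exactly the substitution that turns \eqref{E:ESTIMATEFORDERIVATIVESOFWEIGHT} into ``$g$ has bounded $z$-derivatives,'' and your observation that $\Ifact + \Psi_0 \gtrsim \min\{1,\mathring{A}_*\}$ holds \emph{uniformly} on $\mathcal{L}\cup\mathcal{S}$ lets you run the chain-rule bookkeeping for \eqref{E:WEIGHTATLEASTONEDERIVATIVEPOINTWISE} and \eqref{E:ATLEASTONEDERIVATIVESINGULARFACTORTIMESWEIGHTPOINTWISE} without a separate large-$\Ifact$ argument. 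This is a modest but real streamlining; the paper's version trades that uniformity for a variable ($y^{-1}$) that is only well-behaved where $\Psi_0$ is bounded below, hence the explicit case split there.
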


\begin{proof}
	Throughout this proof, we denote
	$
	\displaystyle
	y = y(t,\underline{x}):= \frac{\Psi_0(t,\underline{x})}{\Ifact(t,\underline{x})}
	$.
	Also, we silently use the observations of Remark~\ref{R:SOLUTIONREMAINSINSIDEREGIMEOFHYPERBOLICITITY}.
	
	\medskip
	
	\noindent \textbf{Proof of \eqref{E:WEIGHTBOUNDEDBYONE}}:
	This bound is a trivial consequence of our assumption \eqref{E:ESTIMATEFORDERIVATIVESOFWEIGHT} on $\weight$.
	
	\medskip
	
	\noindent \textbf{Proof of \eqref{E:WEIGHTEXACTLYONEDERIVATIVEPOINTWISE} and \eqref{E:WEIGHTEXACTLYONEDERIVATIVEPOINTWISESMALL}}:
	We first prove \eqref{E:WEIGHTEXACTLYONEDERIVATIVEPOINTWISE} 
	at spacetime points $(t,\underline{x})$ such that 
	$\Ifact(t,\underline{x}) > (1/4) \min \lbrace 1, \mathring{A}_* \rbrace$.
	This is the easy case because $\Ifact^{-1} < 4 \max \lbrace 1, \mathring{A}^{-1} \rbrace \leq C$, 
	and we therefore do not have to concern ourselves
	with the possibility of small denominators. Specifically,
	using the identity \eqref{E:FORMULAFORDERIVATIVESOFINTEGRATINGFACTOR},
	the bootstrap assumptions,
	the data-size assumptions \eqref{E:DATASIZE},
	and the assumptions of Subsect.\ \ref{SS:WEIGHTASSUMPTIONS},
	we deduce that when $\Ifact(t,\underline{x}) > (1/4) \min \lbrace 1, \mathring{A}_* \rbrace$, 
	we have
	\begin{align} \label{E:EASYCASEWEIGHTEXACTLYONEDERIVATIVEPOINTWISE}	
	\left|
		\nabla
		\left\lbrace
			\weight(\Ifact^{-1} \Psi_0)
		\right\rbrace
	\right|
	& \lesssim 
		\left|
			\nabla \Psi_0
		\right|
		+
		\sum_{a=1}^3
		\left|
			\Psi_a
		\right|
		+
		\sum_{a=1}^3
		\left|
			\mathring{\Psi}_a
		\right|
	\lesssim \varepsilon.
\end{align}
Next, we use the bootstrap assumptions
and the assumptions of Subsect.\ \ref{SS:WEIGHTASSUMPTIONS} on $\weight$
(specifically the uniform positivity of $\weight(y)$ for $y \in [-3/8,C]$)
to obtain
\[
\mathbf{1}_{\left\lbrace \Ifact > (1/4) \min \lbrace 1, \mathring{A}_* \rbrace \right\rbrace} 
\lesssim 
\weight(\Ifact^{-1} \Psi_0)
\lesssim 
\left\lbrace 
	\weight(\Ifact^{-1} \Psi_0)
\right\rbrace^{1/2}.
\]
It follows that 
$\mbox{RHS~\eqref{E:EASYCASEWEIGHTEXACTLYONEDERIVATIVEPOINTWISE}}$ 
is
$\lesssim$ the second term on RHS~\eqref{E:WEIGHTEXACTLYONEDERIVATIVEPOINTWISE}
as desired.

	We now prove \eqref{E:WEIGHTEXACTLYONEDERIVATIVEPOINTWISE} 
	at points $(t,\underline{x})$ such that $0 < \Ifact(t,\underline{x}) \leq (1/4) \min \lbrace 1, \mathring{A}_* \rbrace$.
	Along the way, we will prove some additional estimates that we will use later on.
	We start by defining the following weighted differential operator, which acts on
	functions $f = f(y)$:
	$
	D_Y f := y^2 \frac{d}{dy} f
	$.	
	Note that the chain rule implies that
	\begin{align}
			\nabla \weight(y)
			= - D_Y \weight(y) \nabla (y^{-1}).
	\end{align}
	We therefore inductively deduce that for $1 \leq k \leq 5$, we have
	\begin{align} \label{E:FIRSTPOINTWISEESTIMATECHAINRULEWITHWEIGHT}
		\left|
			\nabla^k \weight(y)
		\right|
		& \lesssim 
			\sum_{n=1}^k
			\left|
				D_Y^n \weight(y)
			\right|
			\left\lbrace
			\mathop{\mathop{\sum}_{\sum_{i=1}^n k_i = k}}_{k_i \geq 1}
			\prod_{i=1}^n
			\left|
				\nabla^{k_i} (y^{-1})
			\right|
			\right\rbrace.
	\end{align}
	The case $k=1$ in \eqref{E:FIRSTPOINTWISEESTIMATECHAINRULEWITHWEIGHT} yields
	$
	\left|
		\nabla \weight(\Ifact^{-1} \Psi_0)
	\right|
	\lesssim 
	(\Ifact^{-1} \Psi_0)^2
	\left|
		\weight'(\Ifact^{-1} \Psi_0)
	\right|
	\left|
		\nabla
		\left\lbrace
			\Ifact \Psi_0^{-1}
		\right\rbrace
	\right|
	$.
	Also using the identity \eqref{E:FORMULAFORDERIVATIVESOFINTEGRATINGFACTOR},
	the bootstrap assumptions,
	the data-size assumptions \eqref{E:DATASIZE},
	\eqref{E:DATAEPSILONVSBOOTSTRAPEPSILON},
	the assumptions of Subsect.\ \ref{SS:WEIGHTASSUMPTIONS},
	and the crucially important estimate \eqref{E:SMALLINTFACTIMPMLIESPSI0ISLARGE}
	(which implies that $\Psi_0^{-1} \lesssim 1$),
	we deduce that when $\Ifact(t,\underline{x}) \leq (1/4) \min \lbrace 1, \mathring{A}_* \rbrace$, 
	we have 
	$
	\left|
		\nabla
		\left\lbrace
			\Ifact \Psi_0^{-1}
		\right\rbrace
	\right|
	\lesssim \varepsilon
	$
	and thus
	\begin{align} \label{E:INEQUALITYCHAIN}
	\left|
		\nabla \weight(\Ifact^{-1} \Psi_0)
	\right|
	& \lesssim 
	\varepsilon
	(\Ifact^{-1} \Psi_0)^2
	\left|
		\weight'(\Ifact^{-1} \Psi_0)
	\right|
	\lesssim
	\varepsilon
	\left\lbrace
	(\Ifact^{-1} \Psi_0)^2
	\left|
		\weight'(\Ifact^{-1} \Psi_0)
	\right|
	\right\rbrace^{1/2}
		\\
	& \lesssim
	\varepsilon
	\left\lbrace
	\Ifact^{-2} 
	\left|
		\weight'(\Ifact^{-1} \Psi_0)
	\right|
	\right\rbrace^{1/2},
	\notag
	\end{align}
	which is $\lesssim$ the first term on RHS~\eqref{E:WEIGHTEXACTLYONEDERIVATIVEPOINTWISE} as desired.
	This finishes the proof of \eqref{E:WEIGHTEXACTLYONEDERIVATIVEPOINTWISE}.
	We clarify that to derive the next-to-last inequality in \eqref{E:INEQUALITYCHAIN}, 
	in which we bounded
	$
	(\Ifact^{-1} \Psi_0)^2
	\left|
		\weight'(\Ifact^{-1} \Psi_0)
	\right|
	$
	by its square root,
	we used \eqref{E:ESTIMATEFORDERIVATIVESOFWEIGHT} 
	to deduce
	$y^2 \left| \weight'(y) \right| \lesssim 1$.
	
	We now prove \eqref{E:WEIGHTEXACTLYONEDERIVATIVEPOINTWISESMALL}.
	From Remark~\ref{R:SOLUTIONREMAINSINSIDEREGIMEOFHYPERBOLICITITY},
	the assumptions of Subsect.\ \ref{SS:WEIGHTASSUMPTIONS} on $\weight$,
	and \eqref{E:SMALLINTFACTIMPMLIESPSI0ISLARGE},
	we deduce that 
	\begin{align} \label{E:MAINTERMWEIGHTEXACTLYONEDERIVATIVEPOINTWISESMALL}
	\mathbf{1}_{\left\lbrace 0 < \Ifact \leq (1/4) \min \lbrace 1, \mathring{A}_* \rbrace \right\rbrace}
		\left\lbrace
			\Ifact^{-2}  
			\left|
				\weight'(\Ifact^{-1} \Psi_0)
			\right|
		\right\rbrace^{1/2}
	& \lesssim
	\mathbf{1}_{\left\lbrace 0 < \Ifact \leq (1/4) \min \lbrace 1, \mathring{A}_* \rbrace \right\rbrace}
		\left\lbrace
			(\Ifact^{-2} \Psi_0^2)  
			\left|
				\weight'(\Ifact^{-1} \Psi_0)
			\right|
		\right\rbrace^{1/2}
			\\
	& \lesssim 1
	\notag
	\end{align}
	and that
	$
	\left\lbrace
			\weight(\Ifact^{-1} \Psi_0)
		\right\rbrace^{1/2}
	\lesssim 1
	$.
	That is, the non-$\varepsilon$ factors on RHS~\eqref{E:WEIGHTEXACTLYONEDERIVATIVEPOINTWISE}
	are $\lesssim 1$. This yields \eqref{E:WEIGHTEXACTLYONEDERIVATIVEPOINTWISESMALL}.
	
	\medskip
	
	\noindent \textbf{Proof of \eqref{E:WEIGHTATLEASTONEDERIVATIVEPOINTWISE}}:
	The proof is similar to that of \eqref{E:WEIGHTEXACTLYONEDERIVATIVEPOINTWISE} but slightly simpler.
	Note that $k \in [2,5]$ by assumption in this estimate.
	We first prove the estimate at points $(t,\underline{x})$ such that 
	$\Ifact(t,\underline{x}) > (1/4) \min \lbrace 1, \mathring{A}_* \rbrace$.
	This is the easy case because $\Ifact^{-1} < 4 \max \lbrace 1, \mathring{A}^{-1} \rbrace \leq C$, 
	and we therefore do not have to concern ourselves
	with the possibility of small denominators. Specifically,
	using the identity \eqref{E:FORMULAFORDERIVATIVESOFINTEGRATINGFACTOR},
	the bootstrap assumptions,
	the data-size assumptions \eqref{E:DATASIZE},
	\eqref{E:DATAEPSILONVSBOOTSTRAPEPSILON},
	and the assumptions of Subsect.\ \ref{SS:WEIGHTASSUMPTIONS},
	we deduce that when $\Ifact(t,\underline{x}) > (1/4) \min \lbrace 1, \mathring{A}_* \rbrace$, 
	we have
	\begin{align} \label{E:EASYCASEWEIGHTATLEASTONEDERIVATIVEPOINTWISE}	
	\left|
		\nabla^k
		\left\lbrace
			\weight(\Ifact^{-1} \Psi_0)
		\right\rbrace
	\right|
	& \lesssim 
		\left|
			\nabla^{[1,k]} \Psi_0
		\right|
		+
		\sum_{a=1}^3
		\left|
			\nabla^{\leq k-1} \Psi_a
		\right|
		+
		\sum_{a=1}^3
		\left|
			\nabla^{\leq k-1} \mathring{\Psi}_a
		\right|,
\end{align}
which is $\lesssim \mbox{RHS~\eqref{E:WEIGHTATLEASTONEDERIVATIVEPOINTWISE}}$ as desired.

It remains for us to prove \eqref{E:WEIGHTATLEASTONEDERIVATIVEPOINTWISE} at points $(t,\underline{x})$ 
such that $0 < \Ifact(t,\underline{x}) \leq (1/4) \min \lbrace 1, \mathring{A}_* \rbrace$.
Note that the estimate \eqref{E:FIRSTPOINTWISEESTIMATECHAINRULEWITHWEIGHT} holds
and that by \eqref{E:ESTIMATEFORDERIVATIVESOFWEIGHT} and Remark~\ref{R:SOLUTIONREMAINSINSIDEREGIMEOFHYPERBOLICITITY},
we have the following bound\footnote{In obtaining this bound, it is helpful to note that
$D_Y f = - \frac{d}{dz} f$, where $z := 1/y$.
\label{FN:DBIGYVSDDZ}} 
for the factors of $D_Y^n \weight(y)$ on RHS~\eqref{E:FIRSTPOINTWISEESTIMATECHAINRULEWITHWEIGHT}:
$
\left|	
	D_Y^n \weight(y)
\right|
\lesssim 1
$.
From this bound,
\eqref{E:FIRSTPOINTWISEESTIMATECHAINRULEWITHWEIGHT}, 
the bootstrap assumptions,
and the data-size assumptions \eqref{E:DATASIZE},
we see that the desired bound
\eqref{E:WEIGHTATLEASTONEDERIVATIVEPOINTWISE} will follow once we show that the following bound holds
when $2 \leq k \leq 5$ and $\Ifact(t,\underline{x}) \leq (1/4) \min \lbrace 1, \mathring{A}_* \rbrace$:
\begin{align} \label{E:POINTWISEBOUNDFORDERIVATIVESOFIFACTOVERPSI0}
		\left|
			\nabla^k (y^{-1})
		\right|
		& \lesssim 
			\left|
				\nabla^{[1,k]} \Psi_0
			\right|
			+
			\sum_{a=1}^3
			\left|
				\nabla^{\leq k-1} \Psi_a
			\right|
			+
			\sum_{a=1}^3
			\left|
				\nabla^{\leq k-1} \mathring{\Psi}_a
			\right|.
	\end{align}
	To prove \eqref{E:POINTWISEBOUNDFORDERIVATIVESOFIFACTOVERPSI0},
	we first note that \eqref{E:SMALLINTFACTIMPMLIESPSI0ISLARGE} implies that
	$1 \lesssim \Psi_0(t,\underline{x})$ in the present context. 
	Thus, 
	$	
		\mbox{LHS~\eqref{E:EASYCASEWEIGHTATLEASTONEDERIVATIVEPOINTWISE}}
		=
		\left|
			\nabla^k 
			\left(
				\frac{\Ifact}{\Psi_0}
			\right)
		\right|
	$
	is the $k^{th}$ derivative of a ratio with a denominator uniformly bounded from below away from $0$, 
	and the desired estimate \eqref{E:WEIGHTATLEASTONEDERIVATIVEPOINTWISE}	
	follows as a straightforward consequence of
	the identity \eqref{E:FORMULAFORDERIVATIVESOFINTEGRATINGFACTOR},
	the data-size assumptions \eqref{E:DATASIZE},
	and the bootstrap assumptions.
	
	\medskip
	
	\noindent \textbf{Proof of \eqref{E:SINGULARFACTORTIMESWEIGHTPOINTWISE},
	\eqref{E:SINGULARFACTORTIMESWEIGHTBOUNDEDBYONE},
	and \eqref{E:ATLEASTONEDERIVATIVESINGULARFACTORTIMESWEIGHTPOINTWISE}}:
	These estimates can be proved using arguments similar to the ones
	we used to prove
	\eqref{E:WEIGHTEXACTLYONEDERIVATIVEPOINTWISE}
	and
	\eqref{E:WEIGHTATLEASTONEDERIVATIVEPOINTWISE},
	based on separately considering the cases
	$\Ifact(t,\underline{x}) > (1/4) \min \lbrace 1, \mathring{A}_* \rbrace$
	and
	$0 < \Ifact(t,\underline{x}) \leq (1/4) \min \lbrace 1, \mathring{A}_* \rbrace$
	and using the assumptions of Subsect.\ \ref{SS:WEIGHTASSUMPTIONS}.
	We omit the details, noting only that we can write
	$
	\Ifact^{-1} \weight(\Ifact^{-1} \Psi_0)
	= \Psi_0^{-1} y \weight(y)
	$
	and that
	the assumptions of Subsect.\ \ref{SS:WEIGHTASSUMPTIONS}
	(especially \eqref{E:WEIGHTVSWEIGHTDERIVATIVECOMPARISON}),
	\eqref{E:SMALLINTFACTIMPMLIESRATIOISLARGE},
	\eqref{E:SMALLINTFACTIMPMLIESPSI0ISLARGE},
	and Remark~\ref{R:SOLUTIONREMAINSINSIDEREGIMEOFHYPERBOLICITITY}
	imply that we have the following key estimates,
	relevant for the more difficult case $0 < \Ifact(t,\underline{x}) \leq (1/4) \min \lbrace 1, \mathring{A}_* \rbrace$:
	\begin{align} \label{E:USEOFWEIGHTVSWEIGHTPRIMEASSUMPTION}
	\mathbf{1}_{\left\lbrace 0 < \Ifact \leq (1/4) \min \lbrace 1, \mathring{A}_* \rbrace \right\rbrace}
	\left\lbrace
		\Psi_0^{-1} y \weight(y)
	\right\rbrace
	& \lesssim
	\mathbf{1}_{\left\lbrace 0 < \Ifact \leq (1/4) \min \lbrace 1, \mathring{A}_* \rbrace \right\rbrace}
	\left\lbrace
	y^2 
	\left|
		\weight'(y)
	\right|
	\right\rbrace^{1/2}
		\\
	& \lesssim
	\mathbf{1}_{\left\lbrace 0 < \Ifact \leq (1/4) \min \lbrace 1, \mathring{A}_* \rbrace \right\rbrace}
	\left\lbrace
	\Ifact^{-2} 
	\left|
		\weight'(y)
	\right|
	\right\rbrace^{1/2}
	\notag
	\end{align}
	and,
	for $n \leq 5$:
	$\left|
			D_Y^n (y \weight(y))
	\right|
	\lesssim 1
	$
	(Footnote~\ref{FN:DBIGYVSDDZ} is also relevant for obtaining this latter bound).
	
	\medskip
	
	\noindent \textbf{Proof of \eqref{E:POINTWISEESTIMATESIFACTMINUSTWOTIMESWEIGHTDERIVATIVE}}:
	We first note that by \eqref{E:WEIGHTPRIMEISNEGATIVE}
	and
	\eqref{E:SMALLINTFACTIMPMLIESRATIOISLARGE},
	we have $\weight'(\Ifact^{-1} \Psi_0) < 0$
	at points $(t,\underline{x})$ such that
	$\Ifact(t,\underline{x}) \leq (1/4) \min \lbrace 1, \mathring{A}_* \rbrace$.
	From this fact and the identity
	$
	1
	=
	\mathbf{1}_{\left\lbrace \Ifact > (1/4) \min \lbrace 1, \mathring{A}_* \rbrace \right\rbrace}
	+
	\mathbf{1}_{\left\lbrace 0 < \Ifact \leq (1/4) \min \lbrace 1, \mathring{A}_* \rbrace \right\rbrace}
	$,
	it follows that
	\[
	\mbox{LHS~\eqref{E:POINTWISEESTIMATESIFACTMINUSTWOTIMESWEIGHTDERIVATIVE}}
	=
	\left|
	\mathbf{1}_{\left\lbrace \Ifact > (1/4) \min \lbrace 1, \mathring{A}_* \rbrace \right\rbrace}
	\Ifact^{-2}
	\weight'(\Ifact^{-1} \Psi_0)
	\right|
	\lesssim
	\mathbf{1}_{\left\lbrace \Ifact > (1/4) \min \lbrace 1, \mathring{A}_* \rbrace \right\rbrace}
	\left|
		\weight'(\Ifact^{-1} \Psi_0)
	\right|.
	\]
	Also using the bound $\left|\weight'(y) \right| \lesssim 1$,
	which is a simple consequence of \eqref{E:ESTIMATEFORDERIVATIVESOFWEIGHT},
	we find that
	$
	\mbox{LHS~\eqref{E:POINTWISEESTIMATESIFACTMINUSTWOTIMESWEIGHTDERIVATIVE}}
	\lesssim 
	\mathbf{1}_{\left\lbrace \Ifact > (1/4) \min \lbrace 1, \mathring{A}_* \rbrace \right\rbrace}
	$.
	Next, we recall the estimate
	$\mathbf{1}_{\left\lbrace \Ifact > (1/4) \min \lbrace 1, \mathring{A}_* \rbrace \right\rbrace} 
	\lesssim \weight(\Ifact^{-1} \Psi_0)$
	that we derived in our proof of
	\eqref{E:WEIGHTEXACTLYONEDERIVATIVEPOINTWISE}.
	Combining the above estimates, we conclude the desired bound \eqref{E:POINTWISEESTIMATESIFACTMINUSTWOTIMESWEIGHTDERIVATIVE}.
	
	\medskip
	
	\noindent \textbf{Proof of \eqref{E:BOUNDEDBYONEPOINTWISEESTIMATESIFACTMINUSTWOTIMESWEIGHTDERIVATIVE}}:
	We first prove \eqref{E:BOUNDEDBYONEPOINTWISEESTIMATESIFACTMINUSTWOTIMESWEIGHTDERIVATIVE} at points $(t,\underline{x})$ 
	such that $\Ifact(t,\underline{x}) > (1/4) \min \lbrace 1, \mathring{A}_* \rbrace$.
	Using the bootstrap assumptions
	and the assumptions of Subsect.\ \ref{SS:WEIGHTASSUMPTIONS} on $\weight$, we deduce,
	in view of Remark~\ref{R:SOLUTIONREMAINSINSIDEREGIMEOFHYPERBOLICITITY},
	that
	$
	\left|
		\Ifact^{-P}
		\weight'(\Ifact^{-1} \Psi_0)
	\right|
	\lesssim 
	\left|
		\weight'(\Ifact^{-1} \Psi_0)
	\right|
	\lesssim 1
	$
	as desired.
	
	It remains for us to prove \eqref{E:BOUNDEDBYONEPOINTWISEESTIMATESIFACTMINUSTWOTIMESWEIGHTDERIVATIVE} at points $(t,\underline{x})$ 
	such that $0 < \Ifact(t,\underline{x}) \leq (1/4) \min \lbrace 1, \mathring{A}_* \rbrace$.
	Using \eqref{E:SMALLINTFACTIMPMLIESPSI0ISLARGE}, we see
	that $1 \lesssim \Psi_0(t,\underline{x})$ at such points,
	and it follows that
	$
	\left|
		\Ifact^{-P}
		\weight'(\Ifact^{-1} \Psi_0)
	\right|
	\lesssim 
	\left|
		\left\lbrace
			\Ifact^{-1} \Psi_0
		\right\rbrace^P
		\weight'(\Ifact^{-1} \Psi_0)
	\right|
	$.
	Using the assumptions of Subsect.\ \ref{SS:WEIGHTASSUMPTIONS} on $\weight$ and the assumption $P \in [0,2]$, 
	we deduce,
	in view of Remark~\ref{R:SOLUTIONREMAINSINSIDEREGIMEOFHYPERBOLICITITY},
	that the RHS of the previous expression is $\lesssim 1$ as desired.
	This finishes the proof of \eqref{E:BOUNDEDBYONEPOINTWISEESTIMATESIFACTMINUSTWOTIMESWEIGHTDERIVATIVE}
	and completes the proof of the lemma.
	
\end{proof}

\subsection{Pointwise estimates for the inhomogeneous terms in the commuted evolution equations}
\label{SS:POINTWISEINHOMOGENEOUSEVOLUTIONEQUATIONS}
With the estimates of Lemma~\ref{L:ESTIMATESINVOLVINGWEIGHT} in hand, we are now ready to derive
pointwise estimates for the inhomogeneous terms in the $\nabla^k$-commuted evolution equations.

\begin{lemma}[\textbf{Pointwise estimates for the inhomogeneous terms}]
	\label{L:POINTWISEESTIMATES}
	Let $\Ifact$ be a solution to \eqref{E:INTEGRATINFACTORODEANDIC} and let
	$\lbrace \Psi_{\alpha} \rbrace_{\alpha =0,1,2,3}$ be a solution to the system \eqref{E:PARTALTPSI0EVOLUTION}-\eqref{E:PARTALTPSIIEVOLUTION}.
	Consider the following system,\footnote{We do not bother to state the precise form of $F^{(k)}$ here.}
	obtained by commuting \eqref{E:PARTALTPSI0EVOLUTION}-\eqref{E:PARTALTPSIIEVOLUTION} 
	with $\nabla^k$:
	\begin{subequations}
	\begin{align} 
	\partial_t \nabla^k \Psi_0
	& = 
		\weight(\Ifact^{-1} \Psi_0) \sum_{a=1}^3 \partial_a \nabla^k \Psi_a
		+
		F_0^{(k)},
		\label{E:COMMUTEDPARTALTPSI0EVOLUTION} \\
	\partial_t \nabla^k \Psi_i
	& 
	= \partial_i \nabla^k \Psi_0
		+ F_i^{(k)}.
	\label{E:COMMUTEDPARTALTPSIIEVOLUTION}
	\end{align}
	\end{subequations}
	Under the data-size assumptions of Subsect.\ \ref{SS:DATAASSUMPTIONS},
	the bootstrap assumptions of Subsect.\ \ref{SS:BOOTSTRAP},
	and the smallness assumptions of Subsect.\ \ref{SS:SMALLNESSASSUMPTIONS},
	for 
	$k=2,3,4,5$
	and
	$(t,\underline{x}) \in [0,T_{(Boot)}) \times \mathbb{R}^3$, 
	the following estimate holds:
	\begin{align}
			\left|
				F_0^{(k)}
			\right|
			& \lesssim
				\varepsilon
				\left|
					\nabla^{[2,k]} \Psi_0
				\right|
				+
				\varepsilon
				\mathbf{1}_{\left\lbrace 0 < \Ifact \leq (1/4) \min \lbrace 1, \mathring{A}_* \rbrace \right\rbrace}
				\sum_{a=1}^3
				\left\lbrace
				\Ifact^{-2} 
				\left|
					\weight'(\Ifact^{-1} \Psi_0)
				\right|
				\right\rbrace^{1/2}
				\left|
					\nabla^k \Psi_a
				\right|
					\label{E:PSI0INHOMOGENEOUSTERMPOINTWISEBOUND} \\
		& \ \
				+
				\varepsilon
				\sum_{a=1}^3
				\left\lbrace
					\weight(\Ifact^{-1} \Psi_0)
				\right\rbrace^{1/2}
				\left|
					\nabla^k \Psi_a
				\right|
				\notag	\\
		& \ \
				+ 
				\sum_{a=1}^3
				\left|
					\nabla^{[1,k-1]} \Psi_a
				\right|
				+ 
				\varepsilon^2
				\sum_{a=1}^3
				\left|
					\Psi_a
				\right|
				+ 
				\sum_{a=1}^3
				\left|
					\nabla^{\leq k} \mathring{\Psi}_a
				\right|.
				\notag	
	\end{align}
	
	Moreover, for $k=0,1,2,3,4$, the following estimate holds:
	\begin{align} \label{E:ANNOYINGLOWORDERPSI0INHOMOGENEOUSTERMPOINTWISEBOUND}
			\left|
				F_0^{(k)}
			\right|
			& \lesssim
				\underbrace{
				\varepsilon
				\left|
					\nabla^{[1,k]} \Psi_0
				\right|
				}_{\mbox{\upshape Absent if $k=0$}}
				+
				\underbrace{
				\sum_{a=1}^3
				\left|
					\nabla^{[1,k]} \Psi_a
				\right|
				}_{\mbox{\upshape Absent if $k=0$}}
				+
				\varepsilon
				\sum_{a=1}^3
				\left|
					\Psi_a
				\right|
				+
				\sum_{a=1}^3
				\left|
					\nabla^{\leq k} \mathring{\Psi}_a
				\right|.
	\end{align}
	
	Finally, for $k=0,1,2,3,4,5$, the following estimate holds:
	\begin{align}
			\sum_{a=1}^3
			\left|
				F_a^{(k)}
			\right|
			& \lesssim
				\underbrace{
				\varepsilon
				\left|
					\nabla^{[2,k]} \Psi_0
				\right|
				}_{\mbox{\upshape Absent if $k=0,1$}}
				+
				\sum_{a=1}^3
				\left|
					\nabla^{\leq k} \mathring{\Psi}_a
				\right|.
				\label{E:PSIIINHOMOGENEOUSTERMPOINTWISEBOUND}
		\end{align}

\end{lemma}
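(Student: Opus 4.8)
The plan is to expand $F_0^{(k)}$ and $F_i^{(k)}$ via the Leibniz rule and then bound each resulting product using the pointwise estimates of Lemma~\ref{L:ESTIMATESINVOLVINGWEIGHT} for the factors built out of $\weight(\Ifact^{-1}\Psi_0)$ and $\Ifact^{-1}\weight(\Ifact^{-1}\Psi_0)$, the data-size assumptions \eqref{E:DATASIZE} of Subsect.~\ref{SS:DATAASSUMPTIONS} for the factors $\nabla^j \mathring{\Psi}_a$, and the $L^\infty$ bootstrap assumptions of Subsect.~\ref{SS:BOOTSTRAP} for the factors $\nabla^j \Psi_0$ and $\nabla^j \Psi_a$ with $j$ small. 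First I would record that commuting \eqref{E:PARTALTPSIIEVOLUTION} with $\nabla^k$ gives $F_i^{(k)} = - \nabla^k(\mathring{\Psi}_i \Psi_0)$, while commuting \eqref{E:PARTALTPSI0EVOLUTION} with $\nabla^k$ and subtracting the principal term $\weight(\Ifact^{-1}\Psi_0)\sum_a \partial_a \nabla^k \Psi_a$ of \eqref{E:COMMUTEDPARTALTPSI0EVOLUTION} leaves, schematically,
\[
F_0^{(k)}
=
\sum_{j=1}^k
\big(\nabla^j \{\weight(\Ifact^{-1}\Psi_0)\}\big) \nabla^{k-j}\Big(\sum_{a=1}^3 \partial_a \Psi_a\Big)
+
\nabla^k\Big(\Ifact^{-1} \weight(\Ifact^{-1}\Psi_0) \sum_{a=1}^3 \Psi_a^2\Big)
-
\nabla^k\Big(\weight(\Ifact^{-1}\Psi_0) \sum_{a=1}^3 \mathring{\Psi}_a \Psi_a\Big),
\]
where in the first term $j$ of the $k+1$ total derivatives fall on the weight.

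The estimate \eqref{E:PSIIINHOMOGENEOUSTERMPOINTWISEBOUND} for $F_i^{(k)}$ is the easiest: I would Leibniz-expand $-\nabla^k(\mathring{\Psi}_i \Psi_0)$ into products $\nabla^{j_1} \mathring{\Psi}_i \cdot \nabla^{j_2} \Psi_0$ with $j_1 + j_2 = k$ and, in each, place the lower-order factor in $L^\infty$. For $j_1 \leq 2$ the data bound $\|\nabla^{\leq 2} \mathring{\Psi}_i\|_{L^\infty(\Sigma_0)} \lesssim \mathring{\upepsilon} \lesssim \varepsilon$ controls the $\mathring{\Psi}_i$-factor, leaving a pointwise factor $\lesssim |\nabla^{\leq k} \Psi_0|$; the $j_1 = 0$ contribution $\mathring{\Psi}_i \nabla^k \Psi_0$ is $\lesssim \varepsilon |\nabla^{[2,k]} \Psi_0|$ when $k \geq 2$, while for $k \leq 1$ it (like every term) is instead absorbed into $\sum_a |\nabla^{\leq k} \mathring{\Psi}_a|$, matching the ``absent if $k = 0,1$'' clause. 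For $j_1 \geq 3$ the companion factor has order $\leq k - 3 \leq 2$, so $\|\nabla^{\leq k-3}\Psi_0\|_{L^\infty} \lesssim 1$ (using $\|\Psi_0\|_{L^\infty(\Sigma_t)} \lesssim \mathring{A}$) and the pointwise factor is $\lesssim |\nabla^{\leq k}\mathring{\Psi}_a|$. Absorbing $\mathring{A}$ into the constants yields \eqref{E:PSIIINHOMOGENEOUSTERMPOINTWISEBOUND}.

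The bulk of the work, and what I expect to be the main obstacle, is $F_0^{(k)}$ at top order $k = 5$: one must verify that the factor $\nabla^k \Psi_a$ — which the energy scheme controls only through the \emph{degenerate} energy carrying the weight $\weight(\Ifact^{-1}\Psi_0)$, which vanishes at the singularity — never occurs with an $O(1)$ coefficient but only multiplied by $\varepsilon \{\weight(\Ifact^{-1}\Psi_0)\}^{1/2}$ or by $\varepsilon \mathbf{1}_{\{0 < \Ifact \leq (1/4)\min\{1,\mathring{A}_*\}\}} \{\Ifact^{-2} |\weight'(\Ifact^{-1}\Psi_0)|\}^{1/2}$. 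There are exactly three Leibniz summands in which all $k$ derivatives survive on a single $\Psi_a$: the $j = 1$ commutator term $(\nabla\{\weight(\Ifact^{-1}\Psi_0)\})\, \nabla^k \Psi_a$, handled by the sharp bound \eqref{E:WEIGHTEXACTLYONEDERIVATIVEPOINTWISE} (whose right-hand side is, up to the factor $\varepsilon$, exactly those two coefficients); the quadratic-nonlinearity term $\Ifact^{-1}\weight(\Ifact^{-1}\Psi_0)\, \Psi_b\, (\nabla^k \Psi_a)$, handled by $|\Psi_b|\lesssim\varepsilon$ and the sharp bound \eqref{E:SINGULARFACTORTIMESWEIGHTPOINTWISE}; and the cubic-nonlinearity term $\weight(\Ifact^{-1}\Psi_0)\, \mathring{\Psi}_a\, (\nabla^k \Psi_a)$, handled by $|\mathring{\Psi}_a|\lesssim\varepsilon$ together with $\weight(\Ifact^{-1}\Psi_0)\lesssim\{\weight(\Ifact^{-1}\Psi_0)\}^{1/2}$ (valid since $\weight\lesssim 1$ on the relevant range by Remark~\ref{R:SOLUTIONREMAINSINSIDEREGIMEOFHYPERBOLICITITY}). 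Using the crude bounds \eqref{E:WEIGHTEXACTLYONEDERIVATIVEPOINTWISESMALL} or \eqref{E:SINGULARFACTORTIMESWEIGHTBOUNDEDBYONE} in place of the sharp ones at these three spots would fatally destroy the $\weight$-weighting — this is the crux. In every other Leibniz summand (of the commutator with $2\le j\le k$, and of the two nonlinear terms) each $\Psi_a$-factor has order $\leq k - 1$ and, since $k \leq 5$ and at most three factors appear, at least one factor has order $\leq 2$; placing the low-order factors in $L^\infty$ — where the bootstrap assumptions give smallness $\varepsilon$, or a harmless constant $\lesssim\mathring{A}$ — and bounding the weight-built factors by \eqref{E:WEIGHTBOUNDEDBYONE}, \eqref{E:WEIGHTATLEASTONEDERIVATIVEPOINTWISE}, \eqref{E:SINGULARFACTORTIMESWEIGHTBOUNDEDBYONE}, \eqref{E:ATLEASTONEDERIVATIVESINGULARFACTORTIMESWEIGHTPOINTWISE} places every such contribution into the terms $\varepsilon|\nabla^{[2,k]}\Psi_0|$, $\sum_a |\nabla^{[1,k-1]}\Psi_a|$, $\varepsilon^2 \sum_a |\Psi_a|$, or $\sum_a |\nabla^{\leq k}\mathring{\Psi}_a|$ of \eqref{E:PSI0INHOMOGENEOUSTERMPOINTWISEBOUND}; a product of three small factors is handled by bounding two of them in $L^\infty$ and keeping the third, matched against the appropriate right-hand term, in the pointwise slot.

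Finally, the cruder estimate \eqref{E:ANNOYINGLOWORDERPSI0INHOMOGENEOUSTERMPOINTWISEBOUND} for $k \leq 4$ follows from the same expansion, with the simplification that, $k$ being below top order, there is no need to track the $\weight$-degeneracy of the highest $\Psi_a$-factor: I would use the crude $|\nabla\{\weight(\Ifact^{-1}\Psi_0)\}| \lesssim \varepsilon$ of \eqref{E:WEIGHTEXACTLYONEDERIVATIVEPOINTWISESMALL}, together with \eqref{E:WEIGHTATLEASTONEDERIVATIVEPOINTWISE}, \eqref{E:SINGULARFACTORTIMESWEIGHTBOUNDEDBYONE}, \eqref{E:ATLEASTONEDERIVATIVESINGULARFACTORTIMESWEIGHTPOINTWISE} and the bootstrap and data-size assumptions, so that any surviving $\nabla^k\Psi_a$ or $\nabla^k\Psi_0$ is simply absorbed into $\sum_a|\nabla^{[1,k]}\Psi_a|$ or $\varepsilon|\nabla^{[1,k]}\Psi_0|$.
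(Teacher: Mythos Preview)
Your proposal is correct and follows essentially the same approach as the paper's proof: identify the three Leibniz summands in $F_0^{(k)}$ in which all $k$ derivatives land on a single $\Psi_a$, bound each with the sharp estimates \eqref{E:WEIGHTEXACTLYONEDERIVATIVEPOINTWISE} and \eqref{E:SINGULARFACTORTIMESWEIGHTPOINTWISE} (rather than the crude bounds \eqref{E:WEIGHTEXACTLYONEDERIVATIVEPOINTWISESMALL}, \eqref{E:SINGULARFACTORTIMESWEIGHTBOUNDEDBYONE}), and handle all remaining terms --- as well as $F_i^{(k)}$ and the cruder estimate \eqref{E:ANNOYINGLOWORDERPSI0INHOMOGENEOUSTERMPOINTWISEBOUND} --- by routine Leibniz expansion together with the $L^\infty$ bootstrap assumptions, the data-size assumptions \eqref{E:DATASIZE}, and the weight estimates \eqref{E:WEIGHTBOUNDEDBYONE}, \eqref{E:WEIGHTATLEASTONEDERIVATIVEPOINTWISE}, \eqref{E:SINGULARFACTORTIMESWEIGHTBOUNDEDBYONE}, \eqref{E:ATLEASTONEDERIVATIVESINGULARFACTORTIMESWEIGHTPOINTWISE}. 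Your discussion is in fact more explicit than the paper's, which merely names the three critical products and the estimates used on them and then dismisses the rest as ``easily seen.''
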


\begin{proof}
	The estimate \eqref{E:PSIIINHOMOGENEOUSTERMPOINTWISEBOUND}
	follows in a straightforward fashion from commuting
	equation \eqref{E:PARTALTPSIIEVOLUTION} 
	with $\nabla^k$ and using the bootstrap assumptions, 
	the data-size assumptions \eqref{E:DATASIZE},
	and \eqref{E:DATAEPSILONVSBOOTSTRAPEPSILON}.
	
	To prove \eqref{E:PSI0INHOMOGENEOUSTERMPOINTWISEBOUND},
	we first commute equation \eqref{E:PARTALTPSI0EVOLUTION} 
	with $\nabla^k$ to obtain equation \eqref{E:COMMUTEDPARTALTPSI0EVOLUTION}.
	The only products in $F_0^{(k)}$ that are difficult to bound are those
	that feature a factor in which $k$ derivatives fall on $\Psi_a$,
	specifically the products
	$
	\sum_{a=1}^3
	\left\lbrace
	\nabla
	\left[
		\weight(\Ifact^{-1} \Psi_0)
	\right]
	\right\rbrace
	\partial_a \nabla^{k-1} \Psi_a
	$,
	$
	\sum_{a=1}^3
	\Ifact^{-1} \weight(\Ifact^{-1} \Psi_0) \Psi_a \nabla^k \Psi_a
	$,
	and
	$
	\sum_{a=1}^3
	\weight(\Ifact^{-1} \Psi_0) \mathring{\Psi}_a \nabla^k \Psi_a
	$.
	To bound the first of these, we use
	the estimate \eqref{E:WEIGHTEXACTLYONEDERIVATIVEPOINTWISE},
	which implies that the product is bounded by the second and third terms on RHS~\eqref{E:PSI0INHOMOGENEOUSTERMPOINTWISEBOUND}
	as desired.
	To handle the second and third products,
	we use 
	\eqref{E:WEIGHTBOUNDEDBYONE},
	\eqref{E:SINGULARFACTORTIMESWEIGHTPOINTWISE},
	the bootstrap assumptions, 
	the data-size assumptions \eqref{E:DATASIZE},
	and \eqref{E:DATAEPSILONVSBOOTSTRAPEPSILON}
	to bound them in magnitude by
	\begin{align*}
	&\lesssim 
	\varepsilon
	\mathbf{1}_{\left\lbrace 0 < \Ifact \leq (1/4) \min \lbrace 1, \mathring{A}_* \rbrace \right\rbrace}
	\sum_{a=1}^3
	\left\lbrace
	\Ifact^{-2} 
	\left|
		\weight'(\Ifact^{-1} \Psi_0)
	\right|
	\right\rbrace^{1/2}
	\left|
		\nabla^k \Psi_a
	\right|
		\\
	& \ \ 
	+
	\varepsilon
	\sum_{a=1}^3
	\left\lbrace
		\weight(\Ifact^{-1} \Psi_0)
	\right\rbrace^{1/2}
	\left|
		\nabla^k \Psi_a
	\right|,
	\end{align*}
	which is in turn bounded by the second and third terms on RHS~\eqref{E:PSI0INHOMOGENEOUSTERMPOINTWISEBOUND}
	as desired.
	The remaining terms in $F_0^{(k)}$ feature $\leq k-1$ derivatives of $\Psi_a$.
	These terms are easily seen to be $\lesssim \mbox{RHS}~\eqref{E:PSI0INHOMOGENEOUSTERMPOINTWISEBOUND}$
	with the help of the estimates
	\eqref{E:WEIGHTATLEASTONEDERIVATIVEPOINTWISE},
	\eqref{E:SINGULARFACTORTIMESWEIGHTBOUNDEDBYONE},
	and
	\eqref{E:ATLEASTONEDERIVATIVESINGULARFACTORTIMESWEIGHTPOINTWISE},
	the bootstrap assumptions, 
	the data-size assumptions \eqref{E:DATASIZE},
	and \eqref{E:DATAEPSILONVSBOOTSTRAPEPSILON}.
	
	The estimate \eqref{E:ANNOYINGLOWORDERPSI0INHOMOGENEOUSTERMPOINTWISEBOUND}
	is easier to prove and can be obtained in a similar fashion
	with the help of the estimates 
	\eqref{E:WEIGHTBOUNDEDBYONE},
	\eqref{E:WEIGHTEXACTLYONEDERIVATIVEPOINTWISESMALL},
	\eqref{E:WEIGHTATLEASTONEDERIVATIVEPOINTWISE},
	\eqref{E:SINGULARFACTORTIMESWEIGHTBOUNDEDBYONE},
	\eqref{E:ATLEASTONEDERIVATIVESINGULARFACTORTIMESWEIGHTPOINTWISE},
	the bootstrap assumptions, 
	the data-size assumptions \eqref{E:DATASIZE},
	and \eqref{E:DATAEPSILONVSBOOTSTRAPEPSILON}.

\end{proof}

\subsection{The main a priori estimates}
\label{SS:MAINAPRIORI}
We now derive the main result of this section: a priori estimates
that hold up to top order and that in particular yield a strict
improvement of the bootstrap assumptions. These are the main
ingredients in the proof of our main theorem.

\begin{proposition}[\textbf{The main a priori estimates}]
	\label{P:APRIORIESTIMATES}
	Let $\mathbf{1}_{\left\lbrace 0 < \Ifact \leq (1/4) \min \lbrace 1, \mathring{A}_* \rbrace \right\rbrace}$
	be the characteristic function of the spacetime subset
	$
	\lbrace
		(t,\underline{x})
		\ | \
		0 < \Ifact (t,\underline{x}) 
		\leq (1/4) \min \lbrace 1, \mathring{A}_* \rbrace
	\rbrace
	$.
	There exists a constant $C > 0$ such that 
	under the data-size assumptions of Subsect.\ \ref{SS:DATAASSUMPTIONS},
	the bootstrap assumptions of Subsect.\ \ref{SS:BOOTSTRAP},
	and the smallness assumptions of Subsect.\ \ref{SS:SMALLNESSASSUMPTIONS},
	for solutions to the system 
	\eqref{E:INTEGRATINFACTORODEANDIC}
	+
	\eqref{E:PARTALTPSI0EVOLUTION}-\eqref{E:PARTALTPSIIEVOLUTION},
	the $L^2$-controlling quantity $\mathbb{Q}_{(\mathring{\upepsilon})}$ of Def.~\ref{D:L2CONTROLLINGQUANTITY}
	verifies the following estimate for $t \in [0,\Tboot)$:
	\begin{align}  \label{E:MAINAPRIORIENERGYESTIMATES}
		&
		\mathbb{Q}_{(\mathring{\upepsilon})}(t)
		+
		\frac{1}{20} \mathring{A}_*^2
		\sum_{k=2}^5
		\sum_{a=1}^3
		\int_{s=0}^t
		\int_{\Sigma_s}
				\mathbf{1}_{\left\lbrace 0 < \Ifact \leq (1/4) \min \lbrace 1, \mathring{A}_* \rbrace \right\rbrace}
				\Ifact^{-2} 
				\left|
					\weight'(\Ifact^{-1} \Psi_0)
				\right|
				|\nabla^{k} \Psi_a|^2
		\, d \underline{x}	
		\, ds
			\\
		&
		\leq C \mathring{\upepsilon}^2.
		\notag
\end{align}

In addition the following estimates hold for $t \in [0,\Tboot)$ and $i=1,2,3$:
\begin{subequations}
\begin{align} \label{E:PARTIALTPSI0H4ESTIMATE}
	\mathring{\upepsilon}
	\| \partial_t \Psi_0 \|_{L^2(\Sigma_t)}^2
	+
	\| \nabla \partial_t \Psi_0 \|_{H^3(\Sigma_t)}^2
	& \leq C \mathring{\upepsilon}^2,
		\\
	\mathring{\upepsilon}^3
	\| \partial_t \Psi_i \|_{L^2(\Sigma_t)}^2
	+
	\| \nabla \partial_t \Psi_i \|_{H^3(\Sigma_t)}^2
	& \leq C \mathring{\upepsilon}^2.
	\label{E:PARTIALTPSIIH4ESTIMATE}
\end{align}
\end{subequations}

Moreover, the integrating factor $\Ifact$ from Def.~\ref{D:INTEGRATINGFACTOR}
verifies the following estimate for $t \in [0,\Tboot)$:
\begin{align} \label{E:IFACTAPRIORIENERGYESTIMATE}
	&
	\mathring{\upepsilon}^3
	\| \nabla \Ifact \|_{L^2(\Sigma_t)}^2
	+
	\| \nabla^{[2,5]} \Ifact \|_{L^2(\Sigma_t)}^2
		\\
	& 
	+
	\int_{s=0}^t
		\int_{\Sigma_s}
		\mathbf{1}_{\left\lbrace 0 < \Ifact \leq (1/4) \min \lbrace 1, \mathring{A}_* \rbrace \right\rbrace}
				\Ifact^{-2}
				\left|
					\weight'(\Ifact^{-1} \Psi_0)
				\right|
			\left|
				\nabla^6 \Ifact
			\right|^2
	\, d \underline{x}
	\, ds
	\leq C \mathring{\upepsilon}^2.
	\notag
\end{align}

Finally, we have the following estimates for $t \in [0,\Tboot)$,
which in particular yield strict improvements of the bootstrap assumptions 
\eqref{E:PSI0ITSELFBOOTSTRAP}-\eqref{E:IFACTITSELFBOOTSTRAP}
whenever $C \mathring{\upepsilon} < \varepsilon$:
\begin{subequations}
\begin{align} \label{E:PSI0ITSELFIMPROVED}
	\| \Psi_0 \|_{L^{\infty}(\Sigma_t)}
	& \leq 
	\mathring{A} 
	+ 
	C \mathring{\upepsilon},
			\\
	\| \nabla^{[1,3]} \Psi_0 \|_{L^{\infty}(\Sigma_t)}
	& \leq C \mathring{\upepsilon},
			 \label{E:PSI0DERIVATVESIMPROVED} \\
	\| \nabla^{\leq 2} \Psi_i \|_{L^{\infty}(\Sigma_t)}
	& \leq C \mathring{\upepsilon},
			\label{E:PSIIANDDERIVATIVESIMPROVED} 
			\\
	\| \Ifact \|_{L^{\infty}(\Sigma_t)}
	& \leq 
		1
		+
		2 \mathring{A}_*^{-1} \mathring{A}	
		+ 
		C \mathring{\upepsilon},
			\label{E:IFACTITSELFIMPROVED} 
				\\
\| \nabla^{[1,3]} \Ifact \|_{L^{\infty}(\Sigma_t)}
	& \leq C \mathring{\upepsilon}.
		\label{E:IFACTDERIVATIVESLIFTYIMPROVED}
\end{align}
\end{subequations}

\end{proposition}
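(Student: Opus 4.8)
The plan is to establish the energy estimate \eqref{E:MAINAPRIORIENERGYESTIMATES} first, by a Gronwall argument applied to the integral identity of Lemma~\ref{L:INTEGRALIDENTITYFORENERGY}, and then to read off the remaining conclusions from it. In \eqref{E:INTEGRALIDENTITYFORENERGY} the decisive point is that the second term on the right-hand side, $\sum_{k=2}^5\sum_a\int_0^t\int_{\Sigma_s}(\Ifact^{-1}\Psi_0)^2\weight'(\Ifact^{-1}\Psi_0)|\nabla^k\Psi_a|^2$, carries a good sign: by Remark~\ref{R:SOLUTIONREMAINSINSIDEREGIMEOFHYPERBOLICITITY} the argument $\Ifact^{-1}\Psi_0$ always lies in $[-3/8,\infty)$; on the singular region $\lbrace 0<\Ifact\leq(1/4)\min\lbrace 1,\mathring{A}_*\rbrace\rbrace$ it is $\geq 1$ by \eqref{E:SMALLINTFACTIMPMLIESRATIOISLARGE}, so $\weight'<0$ there by \eqref{E:WEIGHTPRIMEISNEGATIVE}; and on the complementary region $\Ifact^{-1}$ is bounded and the integrand is $O(1)$, hence that portion is harmless (it is $\lesssim\int_0^t\mathbb{Q}_{(\mathring{\upepsilon})}(s)\,ds$, using that $\weight$ is uniformly positive there). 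On the singular region, \eqref{E:SMALLINTFACTIMPMLIESPSI0ISLARGE} gives $(\Ifact^{-1}\Psi_0)^2\geq(1/16)\mathring{A}_*^2\Ifact^{-2}$, so the term dominates $-(1/16)\mathring{A}_*^2$ times the friction spacetime integral appearing on the left of \eqref{E:MAINAPRIORIENERGYESTIMATES}. Moving that friction integral to the left-hand side, it remains to control all the other integrals on the right of \eqref{E:INTEGRALIDENTITYFORENERGY}.

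Handling those remaining integrals is the heart of the matter and the main obstacle, since they contain the dangerous factors $\Ifact^{-1}$ and $\Ifact^{-2}$. For each such integral I would split spacetime into the region $\lbrace\Ifact>(1/4)\min\lbrace 1,\mathring{A}_*\rbrace\rbrace$, on which $\Ifact^{-1}$ and $\weight$ are bounded above and below and everything reduces to routine polynomial error terms, and the singular region, on which the precise pointwise bounds of Lemma~\ref{L:ESTIMATESINVOLVINGWEIGHT} — in particular \eqref{E:SINGULARFACTORTIMESWEIGHTPOINTWISE}, \eqref{E:SINGULARFACTORTIMESWEIGHTBOUNDEDBYONE}, \eqref{E:POINTWISEESTIMATESIFACTMINUSTWOTIMESWEIGHTDERIVATIVE}, \eqref{E:BOUNDEDBYONEPOINTWISEESTIMATESIFACTMINUSTWOTIMESWEIGHTDERIVATIVE} and those for derivatives of $\weight(\Ifact^{-1}\Psi_0)$ and $\Ifact^{-1}\weight(\Ifact^{-1}\Psi_0)$ — show that these singular factors, once paired with $\weight$ or $\weight'$, are controlled by $\weight(\Ifact^{-1}\Psi_0)$ and by $\mathbf{1}_{\lbrace 0<\Ifact\leq(1/4)\min\lbrace 1,\mathring{A}_*\rbrace\rbrace}\Ifact^{-2}|\weight'(\Ifact^{-1}\Psi_0)|$; the inhomogeneous terms $F_\alpha^{(k)}$ are estimated via Lemma~\ref{L:POINTWISEESTIMATES}. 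After Cauchy--Schwarz and Young's inequality, each error integral is bounded either by a small multiple (a positive power of $\varepsilon$ or $\mathring{\upepsilon}$, or a small Young constant) of the friction integral on the left of \eqref{E:MAINAPRIORIENERGYESTIMATES}, hence absorbable, or by $C\int_0^t\mathbb{Q}_{(\mathring{\upepsilon})}(s)\,ds+C\mathring{\upepsilon}^2$. A recurring subtlety is that $\mathbb{Q}_{(\mathring{\upepsilon})}$ controls $\nabla\Psi_0$ and $\Psi_a$ in $L^2(\Sigma_t)$ only with the weight $\mathring{\upepsilon}^3$, so their $L^2$ norms may be as large as $\mathring{\upepsilon}^{-1/2}$; terms such as $\varepsilon^2\|\Psi_a\|_{L^2(\Sigma_t)}$, $\mathring{\upepsilon}^3\|\nabla\Psi_0\|_{L^2(\Sigma_t)}\|\Psi_a\|_{L^2(\Sigma_t)}$, and so on, close precisely because $\varepsilon^{4/3}\leq\mathring{\upepsilon}\leq\varepsilon$ by \eqref{E:DATAEPSILONVSBOOTSTRAPEPSILON} (so that, e.g., $\varepsilon^2\mathring{\upepsilon}^{-3/2}\lesssim 1$). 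Since $\mathbb{Q}_{(\mathring{\upepsilon})}(0)\lesssim\mathring{\upepsilon}^2$ by the data assumptions \eqref{E:DATASIZE}, Gronwall's inequality applied to $t\mapsto\mathbb{Q}_{(\mathring{\upepsilon})}(t)+(1/16)\mathring{A}_*^2\times(\text{friction integral})$, together with the bootstrap bound $t<2\mathring{A}_*^{-1}$ from \eqref{E:BOOTSTRAPTIME} keeping the exponential factor bounded, yields \eqref{E:MAINAPRIORIENERGYESTIMATES}; the constant $1/20$ is chosen to leave room for the absorptions.

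Given \eqref{E:MAINAPRIORIENERGYESTIMATES}, the estimate \eqref{E:IFACTAPRIORIENERGYESTIMATE} for the integrating factor follows by inserting the pointwise bounds \eqref{E:LOWESTLEVELPOINTWISEESTIMATEFORDERIVATIVESOFIFACT}--\eqref{E:POINTWISEESTIMATEFORDERIVATIVESOFIFACT} for $\nabla^k\Ifact$ (which hold up to $k=6$) into the relevant norms: the lower-order norms are dominated by $\mathbb{Q}_{(\mathring{\upepsilon})}$ plus data, while the $\nabla^6$ spacetime integral is matched against the friction integral already controlled by \eqref{E:MAINAPRIORIENERGYESTIMATES}, after observing that on the singular region $\Ifact^{-2}|\weight'(\Ifact^{-1}\Psi_0)|=\Psi_0^{-2}(\Ifact^{-1}\Psi_0)^2|\weight'(\Ifact^{-1}\Psi_0)|\lesssim 1$ by \eqref{E:SMALLINTFACTIMPMLIESPSI0ISLARGE} and \eqref{E:ESTIMATEFORDERIVATIVESOFWEIGHT}. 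The estimates \eqref{E:PARTIALTPSI0H4ESTIMATE}--\eqref{E:PARTIALTPSIIH4ESTIMATE} are obtained by using the evolution equations \eqref{E:PARTALTPSI0EVOLUTION}--\eqref{E:PARTALTPSIIEVOLUTION} and their $\nabla^k$-commuted forms to trade each $\partial_t$ for spatial quantities, then bounding the outcome with \eqref{E:MAINAPRIORIENERGYESTIMATES}, the weight bounds of Lemma~\ref{L:ESTIMATESINVOLVINGWEIGHT}, and the $L^\infty$ bounds discussed next; the $\mathring{\upepsilon}$ and $\mathring{\upepsilon}^3$ prefactors compensate exactly for the weak $L^2$ control of $\nabla\Psi_0$ and $\Psi_a$.

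For the $L^\infty$ improvements, the bounds on $\nabla^{[2,3]}\Psi_0$, $\nabla^{[1,2]}\Psi_i$, and $\nabla^{[1,3]}\Ifact$ follow from \eqref{E:MAINAPRIORIENERGYESTIMATES}, \eqref{E:IFACTAPRIORIENERGYESTIMATE}, and Lemma~\ref{L:POINTWISEESTIMATEFORDERIVATIVESOFIFACT} via Gagliardo--Nirenberg--Sobolev embedding on $\mathbb{R}^3$ (using compact support), since those quantities are controlled without the degenerate $\mathring{\upepsilon}^3$ weight. The remaining low-order quantities $\Psi_i$ and $\nabla\Psi_0$, being only weakly controlled in $L^2$, require a separate argument: one integrates the evolution equations $\partial_t\Psi_i=\partial_i\Psi_0-\mathring{\Psi}_i\Psi_0$ and $\partial_t\partial_i\Psi_0=\partial_i[\weight(\Ifact^{-1}\Psi_0)\sum_a\partial_a\Psi_a+\Ifact^{-1}\weight(\Ifact^{-1}\Psi_0)\sum_a\Psi_a^2-\weight(\Ifact^{-1}\Psi_0)\sum_a\mathring{\Psi}_a\Psi_a]$ in time, bounds the right-hand sides using the higher-order $L^\infty$ bounds just obtained, the pointwise weight and $\Ifact$-derivative estimates, and the bootstrap assumptions \eqref{E:PSI0ITSELFBOOTSTRAP}--\eqref{E:IFACTITSELFBOOTSTRAP}, and thereby derives a closed Gronwall inequality of the schematic form $G(t)\lesssim\mathring{\upepsilon}+(\varepsilon+\mathring{\upepsilon})\int_0^t G(s)\,ds$ for $G(t):=\|\nabla\Psi_0\|_{L^\infty(\Sigma_t)}+\sum_a\|\Psi_a\|_{L^\infty(\Sigma_t)}$, whence $G\lesssim\mathring{\upepsilon}$ on $[0,\Tboot)$. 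The bounds for $\Psi_0$ and $\Ifact$ themselves then follow from the fundamental theorem of calculus in time, using $|\partial_t\Psi_0|\lesssim\mathring{\upepsilon}$ (a consequence of \eqref{E:PARTALTPSI0EVOLUTION}, the weight bounds, and $G\lesssim\mathring{\upepsilon}$) and $\partial_t\Ifact=-\Psi_0$, the factor $\mathring{A}_*^{-1}$ from the length of the time interval being absorbed into $C$ as in Subsect.~\ref{SS:CONVENTIONSFORCONSTANTS}. I expect the genuine difficulty to be entirely in the second paragraph: reconciling the degeneration of the weight, the singular factors $\Ifact^{-1}$, and the friction integral in the energy identity, which is what forces the region decomposition and the bookkeeping constrained by \eqref{E:DATAEPSILONVSBOOTSTRAPEPSILON}.
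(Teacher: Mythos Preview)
Your proposal is correct and follows essentially the same route as the paper: extract the friction integral from the first spacetime term in \eqref{E:INTEGRALIDENTITYFORENERGY} via \eqref{E:SMALLINTFACTIMPMLIESPSI0ISLARGE} and \eqref{E:POINTWISEESTIMATESIFACTMINUSTWOTIMESWEIGHTDERIVATIVE}, bound all other terms using Lemmas~\ref{L:ESTIMATESINVOLVINGWEIGHT} and \ref{L:POINTWISEESTIMATES}, absorb and apply Gronwall, then deduce the remaining estimates from \eqref{E:MAINAPRIORIENERGYESTIMATES}. One small correction: in your low-order $L^\infty$ argument the coupled Gronwall for $G(t)=\|\nabla\Psi_0\|_{L^\infty}+\sum_a\|\Psi_a\|_{L^\infty}$ does not close with a \emph{small} coefficient, since $\partial_t\Psi_i=\partial_i\Psi_0-\mathring{\Psi}_i\Psi_0$ contributes $|\nabla\Psi_0|$ with coefficient $1$; the paper instead proceeds sequentially, first obtaining $\|\nabla^{[1,2]}\Psi_i\|_{L^\infty}\lesssim\mathring{\upepsilon}$ by Sobolev embedding (these orders are controlled by $\mathbb{Q}_{(\mathring{\upepsilon})}$ without the $\mathring{\upepsilon}^3$ weight), which gives $|\partial_t\nabla\Psi_0|\lesssim\mathring{\upepsilon}$ directly, hence $|\nabla\Psi_0|\lesssim\mathring{\upepsilon}$ by the fundamental theorem of calculus, and only then $|\Psi_i|\lesssim\mathring{\upepsilon}$; your coupled Gronwall with an $O(1)$ coefficient would also work since $t<2\mathring{A}_*^{-1}$.
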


\begin{proof}
\noindent \textbf{Proof of \eqref{E:MAINAPRIORIENERGYESTIMATES}}:
The main step is to derive the following estimate:
\begin{align}  \label{E:CONTROLLINGQUANTITYGRONWALLREADY}
		&
		\mathbb{Q}_{(\mathring{\upepsilon})}(t)
		+
		\frac{1}{16} \mathring{A}_*^2
		\sum_{k=2}^5
			\sum_{a=1}^3
		\int_{s=0}^t
		\int_{\Sigma_s}
				\mathbf{1}_{\left\lbrace 0 < \Ifact \leq (1/4) \min \lbrace 1, \mathring{A}_* \rbrace \right\rbrace}
				\Ifact^{-2} 
				\left|
					\weight'(\Ifact^{-1} \Psi_0)
				\right|
				|\nabla^{k} \Psi_a|^2
		\, d \underline{x}	
		\, ds
			\\
		&
		\leq C \mathring{\upepsilon}^2
		+
		C \varepsilon
		\sum_{k=2}^5
		\sum_{a=1}^3
		\int_{s=0}^t
		\int_{\Sigma_s}
				\mathbf{1}_{\left\lbrace 0 < \Ifact \leq (1/4) \min \lbrace 1, \mathring{A}_* \rbrace \right\rbrace}
				\Ifact^{-2} 
				\left|
					\weight'(\Ifact^{-1} \Psi_0)
				\right|
				|\nabla^{k} \Psi_a|^2
		\, d \underline{x}		
			\notag \\
	& \ \
		+
		C
		\int_{s=0}^t
			\mathbb{Q}_{(\mathring{\upepsilon})}(s)
		\, ds.
			\notag
\end{align}
Once we have shown
\eqref{E:CONTROLLINGQUANTITYGRONWALLREADY}, 
we can absorb the second term on RHS~\eqref{E:CONTROLLINGQUANTITYGRONWALLREADY}
into the second term on LHS~\eqref{E:CONTROLLINGQUANTITYGRONWALLREADY}, which, 
for $\varepsilon$ sufficiently small,
at most reduces the coefficient of $\frac{1}{16} \mathring{A}_*^2$ in front of the second term on the left to the value of 
$\frac{1}{20} \mathring{A}_*^2$,
as is stated on LHS~\eqref{E:MAINAPRIORIENERGYESTIMATES}.
We then use
Gronwall's inequality and the assumption $0 < t < \Tboot \leq 2 \mathring{A}_*^{-1}$
to conclude that 
$\mbox{LHS}~\eqref{E:MAINAPRIORIENERGYESTIMATES} 
\leq C \exp(C t) \mathring{\upepsilon}^2 
\leq C \exp(C \mathring{A}_*^{-1}) \mathring{\upepsilon}^2 
\leq C \mathring{\upepsilon}^2$
as desired.

To prove \eqref{E:CONTROLLINGQUANTITYGRONWALLREADY},
we must bound the terms on 
RHS~\eqref{E:INTEGRALIDENTITYFORENERGY}.
First, we note the following bound for the first term on the RHS:
$\mathbb{Q}_{(\mathring{\upepsilon})}(0) \leq C \mathring{\upepsilon}^2$,
an estimate that follows as a straightforward consequence of 
definition \eqref{E:ENERGYTOCONTROLSOLNS},
the data-size assumptions \eqref{E:DATASIZE}-\eqref{E:PSI0NOTTOONEGATIVE},
the initial condition $\Ifact|_{\Sigma_0} = 1$ stated in \eqref{E:INTEGRATINFACTORODEANDIC},
and the assumptions of Subsect.\ \ref{SS:WEIGHTASSUMPTIONS} on $\weight$.

Next, we treat the spacetime integral
on the first line of RHS~\eqref{E:INTEGRALIDENTITYFORENERGY}.
Using 
\eqref{E:SMALLINTFACTIMPMLIESPSI0ISLARGE},
\eqref{E:POINTWISEESTIMATESIFACTMINUSTWOTIMESWEIGHTDERIVATIVE},
and the bootstrap assumption \eqref{E:PSI0DERIVATVESBOOTSTRAP} for
$\| \Psi_0 \|_{L^{\infty}(\Sigma_t)}$,
we can express the integral as the negative integral
\[
-   \sum_{k=2}^5
		\sum_{a=1}^3
		\int_{s=0}^t
		\int_{\Sigma_s}
			\mathbf{1}_{\left\lbrace 0 < \Ifact \leq (1/4) \min \lbrace 1, \mathring{A}_* \rbrace \right\rbrace}
				(\Ifact^{-1} \Psi_0)^2 
				\left|
					\weight'(\Ifact^{-1} \Psi_0)
				\right|
				|\nabla^{k} \Psi_a|^2
		\, d \underline{x}	
	\, ds,
\]
which is bounded from above by the negative ``favorable integral''
\[
		-   
		\frac{1}{16} \mathring{A}_*^2
		\sum_{k=2}^5
		\sum_{a=1}^3
		\int_{s=0}^t
		\int_{\Sigma_s}
			\mathbf{1}_{\left\lbrace 0 < \Ifact \leq (1/4) \min \lbrace 1, \mathring{A}_* \rbrace \right\rbrace}
				\Ifact^{-2} 
				\left|
					\weight'(\Ifact^{-1} \Psi_0)
				\right|
				|\nabla^{k} \Psi_a|^2
		\, d \underline{x}	
	\, ds,
\]
plus an error integral that is bounded in magnitude by
\[
\lesssim
\sum_{k=2}^5
\sum_{a=1}^3
		\int_{s=0}^t
		\int_{\Sigma_s}
	\weight(\Ifact^{-1} \Psi_0)
	|\nabla^{k} \Psi_a|^2
		\, d \underline{x}	
	\, ds.
\]
We can therefore bring the favorable integral over to LHS~\eqref{E:CONTROLLINGQUANTITYGRONWALLREADY},
where it appears with a ``$+$'' sign.
Moreover, from Def.~\ref{D:L2CONTROLLINGQUANTITY}, we deduce that the error integral
is bounded by the last term on RHS~\eqref{E:CONTROLLINGQUANTITYGRONWALLREADY} as desired.

We now bound the spacetime integrals on lines two to four of
RHS~\eqref{E:INTEGRALIDENTITYFORENERGY}.
Using 
the estimates 
\eqref{E:WEIGHTBOUNDEDBYONE}
and
\eqref{E:BOUNDEDBYONEPOINTWISEESTIMATESIFACTMINUSTWOTIMESWEIGHTDERIVATIVE},
the bootstrap assumptions,
the data-size assumptions \eqref{E:DATASIZE},
and \eqref{E:DATAEPSILONVSBOOTSTRAPEPSILON},
we deduce that all three integrands are bounded in magnitude
by 
$
\lesssim
\sum_{k=2}^5
\sum_{a=1}^3
	\weight(\Ifact^{-1} \Psi_0)
	|\nabla^{k} \Psi_a|^2.
$
From Def.~\ref{D:L2CONTROLLINGQUANTITY},
we conclude that the corresponding error integrals
are bounded by the last term on RHS~\eqref{E:CONTROLLINGQUANTITYGRONWALLREADY} as desired.
Using similar reasoning, we bound the last two spacetime integrals on RHS~\eqref{E:INTEGRALIDENTITYFORENERGY}
by $\leq \mbox{RHS~\eqref{E:CONTROLLINGQUANTITYGRONWALLREADY}}$.

We now bound the spacetime integrals on lines five to seven of RHS~\eqref{E:INTEGRALIDENTITYFORENERGY}.
Using 
the estimates
\eqref{E:POINTWISEESTIMATESIFACTMINUSTWOTIMESWEIGHTDERIVATIVE}
and
\eqref{E:BOUNDEDBYONEPOINTWISEESTIMATESIFACTMINUSTWOTIMESWEIGHTDERIVATIVE},
the bootstrap assumptions,
the data-size assumptions \eqref{E:DATASIZE},
\eqref{E:DATAEPSILONVSBOOTSTRAPEPSILON},
and Young's inequality,
we deduce that all three integrands are bounded in magnitude
by 
\begin{align*}
& \lesssim
		\varepsilon
		\mathbf{1}_{\left\lbrace 0 < \Ifact \leq (1/4) \min \lbrace 1, \mathring{A}_* \rbrace \right\rbrace}
		\sum_{k=2}^5
		\sum_{a=1}^3
				\Ifact^{-2}  
				\left|
					\weight'(\Ifact^{-1} \Psi_0)
				\right|
				|\nabla^{k} \Psi_a|^2
		\\
& 
+
\varepsilon
		\sum_{k=2}^5
		\sum_{a=1}^3
		\weight(\Ifact^{-1} \Psi_0)
		|\nabla^{k} \Psi_a|^2
		+
		\sum_{k=2}^5
		|\nabla^{k} \Psi_0|^2.
\end{align*}
Appealing to Def.~\ref{D:L2CONTROLLINGQUANTITY},
we conclude that the corresponding error integrals
are bounded in magnitude by 
$\lesssim \mbox{RHS~\eqref{E:CONTROLLINGQUANTITYGRONWALLREADY}}$ as desired.

We now bound the spacetime integral on line eight of
RHS~\eqref{E:INTEGRALIDENTITYFORENERGY}, 
in which the integrand is
$
2
\sum_{k=1}^4
\sum_{a=1}^3
\nabla^{k} \Psi_a \cdot \partial_a \nabla^{k} \Psi_0
$.
Using Young's inequality, we bound this integrand by
$
\lesssim
\left|
	\nabla^{[2,5]} \Psi_0
\right|^2
+
\sum_{a=1}^3
\left|
	\nabla^{[1,4]} \Psi_a
\right|^2
$.
From Def.~\ref{D:L2CONTROLLINGQUANTITY},
we conclude that the integral of the RHS of this expression
over the spacetime domain $(s,\underline{x}) \in [0,t] \times \mathbb{R}^3$
is bounded by the last term on RHS~\eqref{E:CONTROLLINGQUANTITYGRONWALLREADY} as desired.

We now bound the spacetime integral on line nine of
RHS~\eqref{E:INTEGRALIDENTITYFORENERGY}, 
in which the integrand is
$
2 \sum_{k=2}^5
\nabla^{k} \Psi_0 \cdot F_0^{(k)}
$.
Using Young's inequality,
\eqref{E:PSI0INHOMOGENEOUSTERMPOINTWISEBOUND},
and \eqref{E:DATAEPSILONVSBOOTSTRAPEPSILON},
we pointwise bound this integrand in magnitude by
\begin{align} \label{E:MOSTDIFFICULTERRORINTEGRANDPOINTWISEBOUND}
		& \lesssim
				\left|
					\nabla^{[2,5]} \Psi_0
				\right|^2
				+
				\varepsilon
				\mathbf{1}_{\left\lbrace 0 < \Ifact \leq (1/4) \min \lbrace 1, \mathring{A}_* \rbrace \right\rbrace}
				\sum_{a=1}^3
				\Ifact^{-2} 
				\left|
					\weight'(\Ifact^{-1} \Psi_0)
				\right|
				\left|
					\nabla^{[2,5]} \Psi_a
				\right|^2
					\\
		& \ \
				+
				\sum_{a=1}^3
				\weight(\Ifact^{-1} \Psi_0)
				\left|
					\nabla^{[2,5]} \Psi_a
				\right|^2
				\notag	\\
		& \ \
				+ 
				\sum_{a=1}^3
				\left|
					\nabla^{[1,4]} \Psi_a
				\right|^2
				+ 
				\mathring{\upepsilon}^3
				\sum_{a=1}^3
				\left|
					\Psi_a
				\right|^2
				+ 
				\sum_{a=1}^3
				\left|
					\nabla^{\leq 5} \mathring{\Psi}_a
				\right|^2.
				\notag
\end{align}
From Def.~\ref{D:L2CONTROLLINGQUANTITY}
and the data-size assumptions \eqref{E:DATASIZE},
we conclude that the integral of RHS~\eqref{E:MOSTDIFFICULTERRORINTEGRANDPOINTWISEBOUND}
over the spacetime domain $(s,\underline{x}) \in [0,t] \times \mathbb{R}^3$
is $\lesssim \mbox{RHS~\eqref{E:CONTROLLINGQUANTITYGRONWALLREADY}}$ as desired.

We now bound the spacetime integral on line ten of RHS~\eqref{E:INTEGRALIDENTITYFORENERGY},
in which the integrand is
$2 \sum_{k=2}^5
\sum_{a=1}^3 
\weight(\Ifact^{-1} \Psi_0) \nabla^{k} \Psi_a \cdot F_a^{(k)}$.
Using Young's inequality,
\eqref{E:WEIGHTBOUNDEDBYONE},
and
\eqref{E:PSIIINHOMOGENEOUSTERMPOINTWISEBOUND},
we pointwise bound this integrand in magnitude by
$\lesssim
\left|
	\nabla^{[2,5]} \Psi_0
\right|^2
+
\sum_{a=1}^3
\weight(\Ifact^{-1} \Psi_0)
\left|
		\nabla^{[2,5]} \Psi_a
\right|^2
+
\sum_{a=1}^3
\left|
	\nabla^{\leq 5} \mathring{\Psi}_a
\right|^2
$.
From Def.~\ref{D:L2CONTROLLINGQUANTITY}
and the data-size assumptions \eqref{E:DATASIZE},
we conclude that the integral of the RHS of this expression
over the spacetime domain $(s,\underline{x}) \in [0,t] \times \mathbb{R}^3$
is $\lesssim \mbox{RHS~\eqref{E:CONTROLLINGQUANTITYGRONWALLREADY}}$ as desired.

We now bound the spacetime integral on line eleven of RHS~\eqref{E:INTEGRALIDENTITYFORENERGY},
in which the integrand is
$
2
\sum_{k=1}^4
\sum_{a=1}^3 
\nabla^{k} \Psi_a \cdot F_a^{(k)}
$.
Using Young's inequality
and
\eqref{E:PSIIINHOMOGENEOUSTERMPOINTWISEBOUND},
we pointwise bound this integrand in magnitude by
$\lesssim
\left|
	\nabla^{[2,4]} \Psi_0
\right|^2
+
\sum_{a=1}^3
\left|
		\nabla^{[1,4]} \Psi_a
\right|^2
+
\sum_{a=1}^3
\left|
	\nabla^{\leq 4} \mathring{\Psi}_a
\right|^2
$.
From Def.~\ref{D:L2CONTROLLINGQUANTITY}
and the data-size assumptions \eqref{E:DATASIZE},
we conclude that the integral of the RHS of this expression
over the spacetime domain $(s,\underline{x}) \in [0,t] \times \mathbb{R}^3$
is $\lesssim \mbox{RHS~\eqref{E:CONTROLLINGQUANTITYGRONWALLREADY}}$ as desired.

We now bound the spacetime integral on line twelve of RHS~\eqref{E:INTEGRALIDENTITYFORENERGY},
in which the integrand is
$
2
\mathring{\upepsilon}^3
\weight(\Ifact^{-1} \Psi_0) \nabla \Psi_0 \cdot \sum_{a=1}^3 \partial_a \nabla \Psi_a
$.
Using the estimate \eqref{E:WEIGHTBOUNDEDBYONE} and Young's inequality,
we bound this integrand by
$
\lesssim
\mathring{\upepsilon}^3
\left|
	\nabla \Psi_0
\right|^2
+
\sum_{a=1}^3 
\left|
	\nabla^2 \Psi_a
\right|^2
$.
From Def.~\ref{D:L2CONTROLLINGQUANTITY},
we conclude that the integral of the RHS of this expression
over the spacetime domain $(s,\underline{x}) \in [0,t] \times \mathbb{R}^3$
is bounded by the last term on RHS~\eqref{E:CONTROLLINGQUANTITYGRONWALLREADY} as desired.

Finally, we bound the spacetime integral on line thirteen of RHS~\eqref{E:INTEGRALIDENTITYFORENERGY},
in which the integrand is
$
2
\mathring{\upepsilon}^3
\nabla \Psi_0 \cdot F_0^{(1)}
$.
Using Young's inequality
and \eqref{E:ANNOYINGLOWORDERPSI0INHOMOGENEOUSTERMPOINTWISEBOUND},
we pointwise bound this integrand in magnitude by
\begin{align} \label{E:LOWORDERANNOYINGERRORINTEGRANDPOINTWISEBOUND}
		& \lesssim
				\mathring{\upepsilon}^3
				\left|
					\nabla \Psi_0
				\right|^2
				+
				\sum_{a=1}^3
				\left|
					\nabla \Psi_a
				\right|^2
				+
				\mathring{\upepsilon}^3
				\sum_{a=1}^3
				\left|
					\Psi_a
				\right|^2
				+
			\sum_{a=1}^3
			\left|
				\nabla^{\leq 1} \mathring{\Psi}_a
			\right|^2.
\end{align}
From Def.~\ref{D:L2CONTROLLINGQUANTITY}
and the data-size assumptions \eqref{E:DATASIZE},
we conclude that the integral of RHS~\eqref{E:LOWORDERANNOYINGERRORINTEGRANDPOINTWISEBOUND}
over the spacetime domain $(s,\underline{x}) \in [0,t] \times \mathbb{R}^3$
is $\lesssim \mbox{RHS~\eqref{E:CONTROLLINGQUANTITYGRONWALLREADY}}$ as desired.
This completes our proof of \eqref{E:CONTROLLINGQUANTITYGRONWALLREADY} 
and therefore finishes the proof of \eqref{E:MAINAPRIORIENERGYESTIMATES}.

\medskip

\noindent \textbf{Proof of \eqref{E:PSI0DERIVATVESIMPROVED} and \eqref{E:PSIIANDDERIVATIVESIMPROVED}}:
In view of Def.~\ref{D:L2CONTROLLINGQUANTITY},
we see that the estimates
$
\| \nabla^{[2,3]} \Psi_0 \|_{L^{\infty}(\Sigma_t)} \lesssim \mathring{\upepsilon}
$
and
$\| \nabla^{[1,2]} \Psi_i \|_{L^{\infty}(\Sigma_t)}  \lesssim \mathring{\upepsilon}$
follow from \eqref{E:MAINAPRIORIENERGYESTIMATES} and Sobolev embedding $H^2(\mathbb{R}^3) \hookrightarrow L^{\infty}(\mathbb{R}^3)$.
To bound $\| \nabla \Psi_0 \|_{L^{\infty}(\Sigma_t)}$, 
we first use equation \eqref{E:COMMUTEDPARTALTPSI0EVOLUTION},
the bootstrap assumptions,
the data-size assumptions \eqref{E:DATASIZE},
the estimates \eqref{E:WEIGHTBOUNDEDBYONE} and \eqref{E:ANNOYINGLOWORDERPSI0INHOMOGENEOUSTERMPOINTWISEBOUND},
inequality \eqref{E:DATAEPSILONVSBOOTSTRAPEPSILON},
and the already proven bound 
$\| \nabla^{[1,2]} \Psi_i \|_{L^{\infty}(\Sigma_t)}  \lesssim \mathring{\upepsilon}$
to obtain
$\left|
	\partial_t \nabla \Psi_0
\right|
\lesssim 
\varepsilon^2
+
\mathring{\upepsilon}
+
\sum_{a=1}^3
\left|
	\nabla^{[1,2]} \Psi_a
\right|
\lesssim
\mathring{\upepsilon}
$.
From this bound, the fundamental theorem of calculus, and the data-size assumptions \eqref{E:DATASIZE},
we find that
$\left|
	\nabla \Psi_0
\right|
\lesssim
\mathring{\upepsilon}
+
\int_{s=0}^t
	\mathring{\upepsilon}
\, ds
\lesssim \mathring{\upepsilon}
$.
This implies that 
$
\| \nabla \Psi_0 \|_{L^{\infty}(\Sigma_t)} \lesssim \mathring{\upepsilon}
$, which completes the proof of \eqref{E:PSI0DERIVATVESIMPROVED}.
Similarly, from equation \eqref{E:PARTALTPSIIEVOLUTION},
the bootstrap assumptions, 
the data-size assumptions \eqref{E:DATASIZE},
and the already proven bound 
$\| \nabla \Psi_0 \|_{L^{\infty}(\Sigma_t)} \lesssim \mathring{\upepsilon}$,
we deduce
$
\sum_{a=1}^3
\left|
	\partial_t \Psi_a
\right|
\lesssim 
\mathring{\upepsilon}
$.
From this bound, the fundamental theorem of calculus, and the data-size assumption \eqref{E:DATASIZE},
we find that
$\sum_{a=1}^3
\left|
	\Psi_a
\right|
\lesssim
\mathring{\upepsilon}
$,
which implies that 
$
\sum_{a=1}^3 \| \Psi_a \|_{L^{\infty}(\Sigma_t)} \lesssim \mathring{\upepsilon}
$, 
thereby completing the proof of \eqref{E:PSIIANDDERIVATIVESIMPROVED}.

\medskip

\noindent \textbf{Proof of \eqref{E:PSI0ITSELFIMPROVED}}:
We first use equation \eqref{E:PARTALTPSI0EVOLUTION},
the estimates \eqref{E:WEIGHTBOUNDEDBYONE} and \eqref{E:SINGULARFACTORTIMESWEIGHTBOUNDEDBYONE},
the bootstrap assumptions, 
the data-size assumptions \eqref{E:DATASIZE}, 
and the already proven bound 
$\| \nabla^{\leq 1} \Psi_i \|_{L^{\infty}(\Sigma_t)} \lesssim \mathring{\upepsilon}$
to obtain
$\left|
	\partial_t \Psi_0
\right|
\lesssim 
\mathring{\upepsilon}
$.
From this bound, the fundamental theorem of calculus, the data-size assumption \eqref{E:LARGEDATASIZE},
and the fact that $0 < t \leq 2 \mathring{A}_*^{-1}$,
we find that
$\|
	\Psi_0
\|_{L^{\infty}(\Sigma_t)}
\leq
\|
	\mathring{\Psi}_0
\|_{L^{\infty}(\Sigma_0)}
+ 
C \mathring{\upepsilon}
\leq
\mathring{A} 
+ 
C \mathring{\upepsilon}
$,
which is the desired bound \eqref{E:PSI0ITSELFIMPROVED}.

\medskip

\noindent \textbf{Proof of \eqref{E:IFACTITSELFIMPROVED} and \eqref{E:IFACTDERIVATIVESLIFTYIMPROVED}}:
We repeat the proof of
\eqref{E:IFACTCRUCIALPOINTWISE}, 
but using the bootstrap assumption
\eqref{E:IFACTITSELFBOOTSTRAP} and the
estimates \eqref{E:PSI0ITSELFIMPROVED}-\eqref{E:PSIIANDDERIVATIVESIMPROVED} 
instead of using the full set of bootstrap assumptions.
We find that
$\Ifact(t,\underline{x}) = 1 - t \mathring{\Psi}_0(\underline{x}) + \mathcal{O}(\mathring{\upepsilon})$.
From this estimate, the fact that $0 < t < 2 \mathring{A}_*^{-1}$,
and the data-size assumption \eqref{E:LARGEDATASIZE},
we conclude the desired bound \eqref{E:IFACTITSELFIMPROVED}.
Similarly, to prove \eqref{E:IFACTDERIVATIVESLIFTYIMPROVED}, 
we repeat the proof of \eqref{E:LINFINITYFORDERIVATIVESOFIFACT},
but using 
the estimates \eqref{E:PSI0ITSELFIMPROVED}-\eqref{E:IFACTITSELFIMPROVED}
instead of the bootstrap assumptions.

\medskip

\noindent \textbf{Proof of \eqref{E:IFACTAPRIORIENERGYESTIMATE}}:
The estimate \eqref{E:IFACTAPRIORIENERGYESTIMATE}	
follows as a straightforward consequence of
the pointwise estimates
\eqref{E:LOWESTLEVELPOINTWISEESTIMATEFORDERIVATIVESOFIFACT}-\eqref{E:POINTWISEESTIMATEFORDERIVATIVESOFIFACT},
the weight estimate \eqref{E:BOUNDEDBYONEPOINTWISEESTIMATESIFACTMINUSTWOTIMESWEIGHTDERIVATIVE},
the energy estimate \eqref{E:MAINAPRIORIENERGYESTIMATES},
and the data-size assumptions \eqref{E:DATASIZE}.

\medskip

\noindent \textbf{Proof of \eqref{E:PARTIALTPSI0H4ESTIMATE} and \eqref{E:PARTIALTPSIIH4ESTIMATE}}:
To prove \eqref{E:PARTIALTPSI0H4ESTIMATE}, 
we first use equation \eqref{E:COMMUTEDPARTALTPSI0EVOLUTION} 
and the estimate \eqref{E:WEIGHTBOUNDEDBYONE}
to deduce that
\begin{align} \label{E:PARTIATPSI0FIRSTH4BOUND}
	\mathring{\upepsilon}
	\| \partial_t \Psi_0 \|_{L^2(\Sigma_t)}^2
	+
	\| \nabla \partial_t \Psi_0 \|_{H^3(\Sigma_t)}^2
	& \lesssim
	\sum_{k=2}^5
	\sum_{a=1}^3
	\left\|
		\left\lbrace
			\weight(\Ifact^{-1} \Psi_0) 
		\right\rbrace^{1/2}
		\nabla^{k} \Psi_a
	\right\|_{L^2(\Sigma_t)}^2
	+
	\sum_{a=1}^3
	\left\|
		\nabla \Psi_a
	\right\|_{L^2(\Sigma_t)}^2
		\\
& \ \
	+
	\mathring{\upepsilon} \left\| F_0^{(0)} \right\|_{L^2(\Sigma_t)}^2
	+
	\sum_{k=1}^4
	\left\| F_0^{(k)} \right\|_{L^2(\Sigma_t)}^2.
	\notag
\end{align}
Next, we recall that
the already proven estimates
\eqref{E:PSI0ITSELFIMPROVED}-\eqref{E:IFACTITSELFIMPROVED} 
imply the bootstrap assumptions
\eqref{E:PSI0ITSELFBOOTSTRAP}-\eqref{E:IFACTITSELFBOOTSTRAP}
hold with $C \mathring{\upepsilon}$ in place of $\varepsilon$.
It follows that the pointwise estimate \eqref{E:ANNOYINGLOWORDERPSI0INHOMOGENEOUSTERMPOINTWISEBOUND}
holds with $C \mathring{\upepsilon}$ in place of $\varepsilon$.
From this fact,
Def.~\ref{D:L2CONTROLLINGQUANTITY},
the energy estimate \eqref{E:MAINAPRIORIENERGYESTIMATES},
and the data-size assumptions \eqref{E:DATASIZE},
we deduce that $\mbox{RHS~\eqref{E:PARTIATPSI0FIRSTH4BOUND}} \lesssim \mathring{\upepsilon}^2$,
which is the desired bound \eqref{E:PARTIALTPSI0H4ESTIMATE}.

The estimate \eqref{E:PARTIALTPSIIH4ESTIMATE} can be proved using similar arguments
based on the evolution equation \eqref{E:COMMUTEDPARTALTPSIIEVOLUTION} 
and the pointwise estimate \eqref{E:PSIIINHOMOGENEOUSTERMPOINTWISEBOUND}, 
and we omit the details.

\end{proof}

\section{Local well-posedness and continuation criteria}
\label{S:WELLPOSEDNESS}
In this section, we provide a proposition that yields
standard well-posedness results and continuation criteria
pertaining to the quantities
$\lbrace \partial_{\alpha} \Phi \rbrace_{\alpha = 0,1,2,3}$, $\Ifact$, and $\lbrace \Psi_{\alpha} \rbrace_{\alpha = 0,1,2,3}$.

\begin{proposition}
	\label{P:LOCALWELLPOSEDNESSANDCONTINUATIONCRITERIA}
	Let $N \geq 3$ be an integer and let
	$(\partial_t \Phi|_{\Sigma_0},\partial_1 \Phi|_{\Sigma_0}, \partial_2 \Phi|_{\Sigma_0}, \partial_3 \Phi|_{\Sigma_0}) 
	= (\mathring{\Psi}_0,\mathring{\Psi}_1,\mathring{\Psi}_2,\mathring{\Psi}_3)$ 
	be initial data (see Remark~\ref{R:NOPHIINWAVEEQUATION})
	for the equation \eqref{E:WAVE} verifying
	$\mathring{\Psi}_{\alpha} \in H^N(\mathbb{R}^3)$, $(\alpha = 0,1,2,3)$.
	Let $\mathcal{H} := (-1/2,\infty)$,
	and note that the following holds: 
	equation \eqref{E:WAVE} is a non-degenerate\footnote{By non-degenerate, 
	we mean that relative to the Cartesian coordinates, the 
	$4 \times 4$ matrix of components $g_{\alpha \beta}$ has signature $(-,+,+,+)$,
	where $g := - dt^2 + \frac{1}{\weight(\partial_t \Phi)}\sum_{a=1}^3 (dx^a)^2$
	is the metric corresponding to equation \eqref{E:WAVE}.} 
	wave equation
	at points $(t,\underline{x})$ such that
	$\partial_t \Phi(t,\underline{x}) \in \mathcal{H}$
	(see \eqref{E:WEIGHTISPOSITIVE} for justification of this assertion).
	Assume that $\mathring{\Psi}_0(\mathbb{R}^3)$ is contained in a compact subset $\mathfrak{K}$ of $\mathcal{H}$.
	Let $\Ifact$, $\Ifact_{\star}$, and $\lbrace \Psi_{\alpha} \rbrace_{\alpha=0,1,2,3}$ be the quantities defined in
	Defs.\ \ref{D:INTEGRATINGFACTOR} and \ref{D:RENORMALIZEDSOLUTION}.
	Then there exist a compact set $\mathfrak{K}'$ of $\mathcal{H}$ containing $\mathfrak{K}$ in its interior
	and a time $T > 0$ depending on $\mathfrak{K}$
	and
	$\sum_{\alpha = 0}^3 \| \mathring{\Psi}_{\alpha} \|_{H^N}$,
	such that a unique classical solution to equation \eqref{E:WAVE} exists on
	$[0,T) \times \mathbb{R}^3$, such that
	$\partial_t \Phi([0,T) \times \mathbb{R}^3) \subset \mathfrak{K}'$,
	and such that the following regularity properties hold for $\alpha = 0,1,2,3$:
	\begin{align}	 \label{E:LWPREGULARITY}
		\partial_{\alpha} \Phi \in C\left([0,T),H^N \right).
	\end{align}
	In addition, the solution depends continuously on the data.
	
	Let $T_{(Lifespan)}$ be the supremum of all times $T > 0$ such that the 
	classical solution to \eqref{E:WAVE} exists on $[0,T) \times \mathbb{R}^3$ and verifies the above properties. 
	Then either $T_{(Lifespan)} =  \infty$, or $T_{(Lifespan)} < \infty$ and one of the following two breakdown scenarios must occur:
	\begin{enumerate}
		\item There exists a sequence of points 
		$\lbrace (t_n,\underline{x}_n) \rbrace_{n=1}^{\infty} \subset [0,T_{(Lifespan)}) \times \mathbb{R}^3$
		such that $\partial_t \Phi(t_n,\underline{x}_n)$ escapes every compact subset of $\mathcal{H}$ as $n \to \infty$.
		\item 
			$
			\displaystyle
			\lim_{t \uparrow T_{(Lifespan)}} 
			\sup_{s \in [0,t)}
			\left\|
				\nabla^{\leq 1} \partial_t \Phi
			\right\|_{L^{\infty}(\Sigma_s)}
			= 
			\infty
			$.
	\end{enumerate}	
	
	Moreover, 
	on $[0,T_{(Lifespan)}) \times \mathbb{R}^3$,
	$\Ifact$ and $\lbrace \Psi_{\alpha} \rbrace_{\alpha=0,1,2,3}$
	are classical solutions to
	equations \eqref{E:INTEGRATINFACTORODEANDIC} and
	\eqref{E:PARTALTPSI0EVOLUTION}-\eqref{E:PARTALTPSIIEVOLUTION}
	such that
	\begin{align}	 \label{E:PSILWPREGULARITY}
		& \Ifact - 1 \in C\left([0,T_{(Lifespan)}),H^{N+1}(\mathbb{R}^3) \right),
		&& \Psi_{\alpha} \in C\left([0,T_{(Lifespan)}),H^N(\mathbb{R}^3) \right).
	\end{align}
	
	Finally, $\Ifact_{\star}$ satisfies the following estimates:
	\begin{align} \label{E:IFACTSTRICTLYPOSITIVEDURINGCLASSICALLIFESPAN}
		& 0 < \Ifact_{\star}(t) < \infty,
		&& \mbox{for } t \in [0,T_{(Lifespan)}).
	\end{align}

\end{proposition}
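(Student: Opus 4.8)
The plan is to reduce the entire statement to the standard theory of quasilinear symmetric hyperbolic systems, and then to upgrade the regularity of $\Ifact$ by one derivative using the identity \eqref{E:FORMULAFORDERIVATIVESOFINTEGRATINGFACTOR}. First I would set $v_\alpha := \partial_\alpha \Phi$ and rewrite \eqref{E:WAVE} as the first-order system $\partial_t v_0 = \weight(v_0) \sum_{a=1}^3 \partial_a v_a - v_0^2$, $\partial_t v_a = \partial_a v_0$, the second equation being the commutation identity $\partial_t \partial_a \Phi = \partial_a \partial_t \Phi$. This system is symmetric hyperbolic wherever $\weight(v_0) > 0$, i.e. wherever $v_0 \in \mathcal{H}$, with the diagonal, uniformly positive-definite symmetrizer $\mathrm{diag}(1, \weight(v_0), \weight(v_0), \weight(v_0))$; the constraint $\partial_a v_b = \partial_b v_a$, which holds at $t = 0$ for data arising from a function $\Phi$, is propagated by the flow since $\partial_t(\partial_a v_b - \partial_b v_a) = 0$, so that one may recover $\Phi$ by integrating $v_0$ in time and then $v_\alpha = \partial_\alpha \Phi$. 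Standard local well-posedness theory for such systems then produces the time $T > 0$ (depending on $\mathfrak{K}$ and $\sum_\alpha \| \mathring{\Psi}_\alpha \|_{H^N}$), the enlarged compact $\mathfrak{K}' \Subset \mathcal{H}$, the unique solution with $v_0([0,T) \times \mathbb{R}^3) \subset \mathfrak{K}'$, the regularity \eqref{E:LWPREGULARITY}, and continuous dependence on the data; finite propagation speed — the wave speed $\weight(v_0)^{1/2}$ being bounded along the solution — shows the solution remains compactly supported.

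Next I would record the continuation criterion. The general theory gives that the solution persists past $T$ as long as $v_0$ remains in a fixed compact subset of $\mathcal{H}$ and $\| \partial \Phi(t) \|_{L^\infty}$ stays bounded. The failure of the first condition is exactly breakdown scenario (1). If instead it holds but the solution still cannot be continued, then $\limsup_{t \uparrow T_{(Lifespan)}} \| \partial \Phi(t) \|_{L^\infty} = \infty$; writing $\nabla \Phi(t,\underline{x}) = \nabla \Phi(0,\underline{x}) + \int_0^t \nabla \partial_t \Phi(s,\underline{x}) \, ds$ with $\nabla \Phi|_{\Sigma_0} = (\mathring{\Psi}_1,\mathring{\Psi}_2,\mathring{\Psi}_3)$, the spatial part of $\| \partial \Phi(t) \|_{L^\infty}$ is controlled by $\| \mathring{\Psi}_a \|_{L^\infty(\Sigma_0)} + \int_0^t \| \nabla \partial_t \Phi(s) \|_{L^\infty} \, ds$, so (for $T_{(Lifespan)} < \infty$) the blowup must in fact originate from $\| \nabla^{\leq 1} \partial_t \Phi(t) \|_{L^\infty}$, which is scenario (2). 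Conversely, if both (1) and (2) fail then $\| \partial \Phi \|_{L^\infty}$ is bounded and the solution stays uniformly in the hyperbolic regime, so it continues, forcing $T_{(Lifespan)} = \infty$.

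It remains to treat $\Ifact$ and the $\Psi_\alpha$ on $[0,T_{(Lifespan)})$. Working on each slab $[0,t] \times \mathbb{R}^3$ with $t < T_{(Lifespan)}$ — on which $v_0$ is bounded, since $v_0 \in C([0,t],H^N) \hookrightarrow C([0,t],L^\infty)$ — equation \eqref{E:INTEGRATINFACTORODEANDIC} has the explicit solution $\Ifact(s,\underline{x}) = \exp \left( - \int_0^s \partial_t \Phi(\tau,\underline{x}) \, d\tau \right)$, which is pointwise positive and lies between $\exp( - \int_0^t \| \partial_t \Phi \|_{L^\infty} \, d\tau)$ and $\exp( \int_0^t \| \partial_t \Phi \|_{L^\infty} \, d\tau)$; since $\Ifact - 1$ is compactly supported (again by finite speed on $[0,t]$) and continuous, the minimum defining $\Ifact_{\star}$ is attained, and \eqref{E:IFACTSTRICTLYPOSITIVEDURINGCLASSICALLIFESPAN} follows. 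For the regularity, a Moser estimate applied to \eqref{E:INTEGRATINFACTORODEANDIC}, viewed as a linear transport equation with $H^N$ multiplier $- v_0$, gives $\Ifact - 1 \in C([0,T_{(Lifespan)}),H^N)$. Then, crucially, the identity \eqref{E:FORMULAFORDERIVATIVESOFINTEGRATINGFACTOR} expresses $\partial_i \Ifact = - \Psi_i + \Ifact \mathring{\Psi}_i = \Ifact(\mathring{\Psi}_i - \partial_i \Phi)$ as a product of $H^N$ functions, and since $H^N(\mathbb{R}^3)$ is an algebra for $N \geq 3$ this yields $\nabla \Ifact \in C([0,T_{(Lifespan)}),H^N)$, i.e. the one-derivative gain $\Ifact - 1 \in C([0,T_{(Lifespan)}),H^{N+1})$, with no factor of $\Ifact^{-1}$ ever entering. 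Finally $\Psi_\alpha = \Ifact \partial_\alpha \Phi$ is a product of an $H^{N+1}$ function and an $H^N$ function, hence lies in $C([0,T_{(Lifespan)}),H^N)$; Prop.~\ref{P:RENORMALIZEDEVOLUTOINEQUATIONS} shows it solves \eqref{E:PARTALTPSI0EVOLUTION}-\eqref{E:PARTALTPSIIEVOLUTION}, and the embedding $H^N \hookrightarrow C^1$ makes all of these classical solutions.

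The one step requiring genuine care — the rest being routine applications of quasilinear hyperbolic theory — is the derivative gain for $\Ifact$: from the transport equation \eqref{E:INTEGRATINFACTORODEANDIC} alone one only obtains $\Ifact - 1 \in H^N$, and it is precisely the identity \eqref{E:FORMULAFORDERIVATIVESOFINTEGRATINGFACTOR} that both supplies the extra spatial derivative and keeps the singular factor $\Ifact^{-1}$ out of all estimates, which is what allows $\Ifact$ to carry the same (indeed one more) degree of differentiability as $\partial \Phi$ all the way up to $T_{(Lifespan)}$.
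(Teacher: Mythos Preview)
Your approach is essentially the paper's: invoke standard quasilinear local well-posedness for the $\Phi$ part, then get the regularity of $\Ifact$ and $\Psi_\alpha$ via the explicit formula, the identity \eqref{E:FORMULAFORDERIVATIVESOFINTEGRATINGFACTOR}, and the Sobolev--Moser calculus. The paper's own proof is in fact briefer than yours; it simply cites a reference for everything concerning $\Phi$ and then argues exactly as you do for $\Ifact$ and $\Psi_\alpha$.

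One imprecision worth flagging: the sentence ``The general theory gives that the solution persists past $T$ as long as $v_0$ remains in a fixed compact subset of $\mathcal{H}$ and $\|\partial\Phi(t)\|_{L^\infty}$ stays bounded'' is not what the standard continuation criterion for symmetric hyperbolic systems actually says. For $A^0(v)\partial_t v + A^j(v)\partial_j v = f(v)$ the criterion is that $v$ stays in a compact set \emph{and} $\|\nabla v\|_{L^\infty} = \|\partial^2\Phi\|_{L^\infty}$ (or its time integral) remains bounded, not merely $\|v\|_{L^\infty} = \|\partial\Phi\|_{L^\infty}$. Your reduction to scenario (2) therefore does not go through as written: you would still need to control $\|\nabla v_a\|_{L^\infty}$, and integrating $\partial_t\nabla v_a = \nabla\partial_a v_0$ only bounds this by $\int_0^t\|\nabla^2 v_0\|_{L^\infty}\,ds$, which criterion (2) does not supply. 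The paper sidesteps this entirely by citing \cite{jS2008c}, so you are at the same level of rigor; but if you want a self-contained argument you should either strengthen the stated criterion or exploit the specific structure (the coefficients depend only on $v_0$) more carefully in the energy estimates.
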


\begin{proof}
	The statements concerning $\Phi$ are standard
	and can be proved using the ideas found, for example, in \cite{jS2008c}.
	
	Next, we note that the evolution equation + initial condition for $\Ifact$
	stated in \eqref{E:INTEGRATINFACTORODEANDIC},
	the fact that $\Ifact(t,\cdot) - 1$ is compactly supported in space (see Remark~\ref{R:BOUNDEDWAVESPEED}),
	and the fact that
	$\partial_t \Phi \in C\left([0,T_{(Lifespan)}),H^N(\mathbb{R}^3) \right) \subset C\left([0,T_{(Lifespan)}),C^1(\mathbb{R}^3) \right)$
	(i.e., \eqref{E:LWPREGULARITY})
	can be used to deduce \eqref{E:IFACTSTRICTLYPOSITIVEDURINGCLASSICALLIFESPAN}.
	Similarly, from 
	\eqref{E:INTEGRATINFACTORODEANDIC},
	the identity \eqref{E:FORMULAFORDERIVATIVESOFINTEGRATINGFACTOR},
	the definition $\Psi_{\alpha} := \Ifact \partial_{\alpha} \Phi$ (see Def.\ \ref{D:RENORMALIZEDSOLUTION}),
	\eqref{E:LWPREGULARITY},
	and the standard Sobolev--Moser calculus,
	it is straightforward to deduce \eqref{E:PSILWPREGULARITY}.
	
\end{proof}

\section{The main theorem}
\label{S:MAINTHM}
In this section, we state and prove our main stable blowup result.

\begin{theorem}[\textbf{Stable ODE-type blowup}]
	\label{T:STABILITYOFODEBLOWUP}
	Assume that the weight function $\weight$ verifies the assumptions
	stated in Subsect.\ \ref{SS:WEIGHTASSUMPTIONS}.
	Consider compactly supported initial data 
	$(\partial_t \Phi|_{\Sigma_0},\partial_1 \Phi|_{\Sigma_0}, \partial_2 \Phi|_{\Sigma_0}, \partial_3 \Phi|_{\Sigma_0}) 
	= (\mathring{\Psi}_0,\mathring{\Psi}_1,\mathring{\Psi}_2,\mathring{\Psi}_3)$
	for the wave equation \eqref{E:WAVE}
	(see Remark~\ref{R:NOPHIINWAVEEQUATION} concerning the data)
	that verify the data-size assumptions \eqref{E:DATASIZE}-\eqref{E:PSI0NOTTOONEGATIVE}
	involving the parameters $\mathring{\upepsilon}$ and $\mathring{A}$,
	and let $\mathring{A}_*$ be the data-size parameter defined in \eqref{E:CRUCIALDATASIZEPARAMETER}.
	Let $\Ifact$, $\Ifact_{\star}$, and $\lbrace \Psi_{\alpha} \rbrace_{\alpha=0,1,2,3}$ be the quantities defined in
	Defs.\ \ref{D:INTEGRATINGFACTOR} and \ref{D:RENORMALIZEDSOLUTION}.
	We define
	\begin{align} \label{E:TLIFESPAN}
		T_{(Lifespan)}
		& := \sup 
					\left\lbrace
						t > 0 \ | \ \lbrace \partial_{\alpha} \Phi \rbrace_{\alpha=0,1,2,3} \mbox{ exist classically on } [0,t) \times \mathbb{R}^3
					\right\rbrace.
	\end{align}
	If
	$\mathring{\upepsilon} > 0$,
	$\mathring{A} > 0$,
	and
	$\mathring{A}_* > 0$,
	and if $\mathring{\upepsilon}$ is small relative to 
	$\mathring{A}^{-1}$
	and
	$\mathring{A}_*$ 
	in the sense explained in Subsect.\ \ref{SS:SMALLNESSASSUMPTIONS}, then
	the following conclusions hold.
	
	\medskip
	\noindent \underline{\textbf{Characterization of the solution's classical lifespan}}:
	The solution's classical lifespan is characterized by $\Ifact_{\star}$ as follows:
	\begin{align} \label{E:LIFESPANCHARACTERIZATION}
		T_{(Lifespan)}
		= \sup\left\lbrace t > 0 \ | \  \inf_{s \in [0,t)} \Ifact_{\star}(s) > 0 \right\rbrace.
	\end{align}
	Moreover,
	\begin{subequations}
	\begin{align}
		& \Ifact(t,\underline{x}) > 0 \mbox{ for } (t,\underline{x}) \in [0,T_{(Lifespan)}) \times \mathbb{R}^3,
			 \label{E:IFACTPOSITIVEDURINGCLASSICALLIFESPAN} \\
		& \lim_{t \uparrow T_{(Lifespan)}} \Ifact_{\star}(t) = 0.	
			\label{E:IFACTMINVANISHESATLIFESPAN}
	\end{align}
	\end{subequations}
	
	In addition, the following estimate holds:
	\begin{align} \label{E:MAINTHMLIFESPANESTIMATE}
		T_{(Lifespan)} 
		& = \mathring{A}_*^{-1} 
				\left\lbrace
					1
					+ 
					\mathcal{O}(\mathring{\upepsilon})
				\right\rbrace.
	\end{align}
	
	\medskip
	\noindent \underline{\textbf{Regularity properties of} 
	$\Psi_{\alpha}$ \textbf{and } $\Ifact$ \textbf{on } $[0,T_{(Lifespan)}) \times \mathbb{R}^3$}:
	On the slab
	$[0,T_{(Lifespan)}) \times \mathbb{R}^3$,
	the solution verifies the energy bounds
	\eqref{E:MAINAPRIORIENERGYESTIMATES}-\eqref{E:IFACTAPRIORIENERGYESTIMATE},
	the $L^{\infty}$ estimates \eqref{E:PSI0ITSELFIMPROVED}-\eqref{E:IFACTDERIVATIVESLIFTYIMPROVED},
	\eqref{E:PSI0WELLAPPROXIMATED}-\eqref{E:PSI0BIGGERTHANMINUSONEHALF},
	and
	\eqref{E:IFACTCRUCIALPOINTWISE}-\eqref{E:SMALLINTFACTIMPMLIESRATIOISLARGE}
	(with $C \mathring{\upepsilon}$ on the RHS in place of $\varepsilon$ in these equations).
	Moreover, $\lbrace \Psi_{\alpha} \rbrace_{\alpha=0,1,2,3}$ and $\Ifact$
	enjoy the following regularity:
	\begin{subequations}
	\begin{align}
		\label{E:MAINTHEOREMPSI0REGULARITY}
		\Psi_0  
		& \in 
			C\left([0,T_{(Lifespan)}), H^5(\mathbb{R}^3) \right)
			\cap
			L^{\infty}\left([0,T_{(Lifespan)}), H^5(\mathbb{R}^3) \right),
			\\
		\Psi_i  
		& \in C\left([0,T_{(Lifespan)}), H^5(\mathbb{R}^3) \right)
			\cap
			L^{\infty}\left([0,T_{(Lifespan)}), H^4(\mathbb{R}^3) \right),
			\label{E:MAINTHEOREMPSIIREGULARITY}
				\\
		\Ifact - 1
		& \in 
			C\left([0,T_{(Lifespan)}), H^6(\mathbb{R}^3) \right)
			\cap
			L^{\infty}\left([0,T_{(Lifespan)}), H^5(\mathbb{R}^3) \right).
		\label{E:MAINTHEOREMIFACTREGULARITY}
	\end{align}
	\end{subequations}

	\medskip
	\noindent \underline{\textbf{Regularity properties of} 
	$\Psi_{\alpha}$ \textbf{and } $\Ifact$ \textbf{on } $[0,T_{(Lifespan)}] \times \mathbb{R}^3$}:
	$\Psi_{\alpha}$ and $\Ifact$
	\textbf{do not blow up} at time $T_{(Lifespan)}$, but rather
	continuously extend to $[0,T_{(Lifespan)}] \times \mathbb{R}^3$
	as functions that enjoy the following regularity for any $N < 5$:
	\begin{subequations}
	\begin{align}
		\label{E:EXTENDEDMAINTHEOREMPSI0REGULARITY}
		\Psi_0  
		& \in 
			L^{\infty}\left([0,T_{(Lifespan)}], H^5(\mathbb{R}^3) \right)
			\cap
			C\left([0,T_{(Lifespan)}], H^N(\mathbb{R}^3) \right),
			\\
		\Psi_i  
		& \in C\left([0,T_{(Lifespan)}], H^4(\mathbb{R}^3) \right),
			\label{E:EXTENDEDMAINTHEOREMPSIIREGULARITY}
				\\
		\Ifact - 1
		& \in C\left([0,T_{(Lifespan)}], H^5(\mathbb{R}^3) \right).
		\label{E:EXTENDEDMAINTHEOREMIFACTREGULARITY}
	\end{align}
	\end{subequations}
	
	\medskip
	\noindent \underline{\textbf{Description of the vanishing of } $\Ifact$ \textbf{and the blowup of} $\partial_t \Phi$}:
	For $(t,\underline{x}) \in [0,T_{(Lifespan)}) \times \mathbb{R}^3$,
	we have
	\begin{align} \label{E:MAINTHMSMALLINTFACTIMPMLIESPSI0ISLARGE}
		\Ifact(t,\underline{x}) 
		\leq \frac{1}{2}
		\implies
		\partial_t \Phi(t,\underline{x})
		\geq \frac{1}{4 \Ifact(t,\underline{x})} \mathring{A}_*.
	\end{align}
	
	Let 
	\begin{align} \label{E:BLOWUPSET}
		\Sigma_{T_{(Lifespan)}}^{Blowup}
		& := \lbrace (T_{(Lifespan)}, \underline{x}) \ | \ \Ifact(T_{(Lifespan)},\underline{x}) = 0 \rbrace.
	\end{align}
	
	Then if $(T_{(Lifespan)}, \underline{x}) \in \Sigma_{T_{(Lifespan)}}^{Blowup}$,
	we have\footnote{See also Remark~\ref{R:BLOWUPOFPHI} concerning the blowup of $\Phi$ itself, if initial data for
	$\Phi$ itself are prescribed.}
	\begin{align} \label{E:BLOWUPBEHAVIOR}
		\lim_{t \uparrow T_{(Lifespan)}} \partial_t \Phi(t,\underline{x}) = \infty.
	\end{align}
	
	Finally, if $(T_{(Lifespan)}, \underline{x}) \notin \Sigma_{T_{(Lifespan)}}^{Blowup}$,
	then there exists an open ball $B_{\underline{x}} \subset \mathbb{R}^3$ centered at $\underline{x}$
	such that for $\alpha = 0,1,2,3$, we have
	$\partial_{\alpha} \Phi \in C\left([0,T_{(Lifespan)}], H^5(B_{\underline{x}}) \right)$.
	
\end{theorem}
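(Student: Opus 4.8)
The plan is to deduce the theorem from the a priori estimates of Prop.~\ref{P:APRIORIESTIMATES}, the pointwise estimates of Lemmas~\ref{L:POINTWISEFORPSIANDPARTIALTPSI}--\ref{L:ESTIMATESINVOLVINGWEIGHT}, and the local well-posedness/continuation result of Prop.~\ref{P:LOCALWELLPOSEDNESSANDCONTINUATIONCRITERIA}, via a standard continuity (bootstrap) argument; all of the genuinely hard analysis has already been carried out in Sect.~\ref{S:ESTIMATES}, so what remains is bookkeeping. First, with $\mathring{\upepsilon}$ small relative to $\mathring{A}^{-1}$ and $\mathring{A}_*$ as in the hypotheses, I fix the bootstrap parameter $\varepsilon$ in the admissible window of \eqref{E:DATAEPSILONVSBOOTSTRAPEPSILON} but strictly larger than $\mathring{\upepsilon}$ (e.g.\ $\varepsilon := \mathring{\upepsilon}^{4/5}$); then the data-size assumptions \eqref{E:DATASIZE}--\eqref{E:PSI0NOTTOONEGATIVE} imply the bootstrap assumptions of Subsect.~\ref{SS:BOOTSTRAP} \emph{strictly} at $t = 0$, and $C \mathring{\upepsilon} < \varepsilon$ for $\mathring{\upepsilon}$ small, where $C$ is the constant of Prop.~\ref{P:APRIORIESTIMATES}. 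Let $\mathcal{T}$ be the set of $T \in (0, \min\{T_{(Lifespan)}, 2 \mathring{A}_*^{-1}\})$ such that the bootstrap assumptions hold on $[0,T)$ with $\Tboot = T$. By Prop.~\ref{P:LOCALWELLPOSEDNESSANDCONTINUATIONCRITERIA} and time-continuity of the solution, $\mathcal{T}$ is nonempty; it is closed by continuity; and it is open, because for $T \in \mathcal{T}$ Prop.~\ref{P:APRIORIESTIMATES} (together with \eqref{E:PSI0BIGGERTHANMINUSONEHALF}, \eqref{E:SMALLINTFACTIMPMLIESPSI0ISLARGE}, \eqref{E:PSI0NEGATIVEIMPMLIESINHYPERBOLICREGIME} and Remarks~\ref{R:IMPROVEMENTOFANUNUSUALBOOTSTRAPASSUMPTION}--\ref{R:SOLUTIONREMAINSINSIDEREGIMEOFHYPERBOLICITITY}) recovers the bootstrap bounds on $[0,T)$ with $C \mathring{\upepsilon} < \varepsilon$ in place of $\varepsilon$, i.e.\ with strict room to spare, so they persist slightly past $T$. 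Hence $\mathcal{T} = (0, \min\{T_{(Lifespan)}, 2 \mathring{A}_*^{-1}\})$, and all conclusions of Prop.~\ref{P:APRIORIESTIMATES} and Lemmas~\ref{L:POINTWISEFORPSIANDPARTIALTPSI}--\ref{L:ESTIMATESINVOLVINGWEIGHT} hold on $[0, \min\{T_{(Lifespan)}, 2 \mathring{A}_*^{-1}\}) \times \mathbb{R}^3$ with $C \mathring{\upepsilon}$ replacing $\varepsilon$.

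Next I would pin down $T_{(Lifespan)}$. If $T_{(Lifespan)} \geq 2 \mathring{A}_*^{-1}$, then \eqref{E:IFACTSTARCRUCIALPOINTWISE} would give $\Ifact_{\star}(t) = 1 - t \mathring{A}_* + \mathcal{O}(\mathring{\upepsilon})$ on $[0, 2 \mathring{A}_*^{-1})$, which is negative near $t = \tfrac{3}{2} \mathring{A}_*^{-1}$, contradicting the positivity \eqref{E:IFACTSTRICTLYPOSITIVEDURINGCLASSICALLIFESPAN} valid throughout the classical lifespan; hence $T_{(Lifespan)} < 2 \mathring{A}_*^{-1}$ and the estimates of the previous paragraph hold on all of $[0, T_{(Lifespan)}) \times \mathbb{R}^3$. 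The positivity \eqref{E:IFACTPOSITIVEDURINGCLASSICALLIFESPAN} and the characterization \eqref{E:LIFESPANCHARACTERIZATION} then follow readily from \eqref{E:IFACTSTRICTLYPOSITIVEDURINGCLASSICALLIFESPAN} and $\Ifact \geq \Ifact_{\star}$. From $\Ifact_{\star}(t) = 1 - t \mathring{A}_* + \mathcal{O}(\mathring{\upepsilon}) > 0$ on $[0, T_{(Lifespan)})$ we read off $T_{(Lifespan)} \leq \mathring{A}_*^{-1}(1 + C\mathring{\upepsilon})$; conversely, if $T_{(Lifespan)} \leq \mathring{A}_*^{-1}(1 - C'\mathring{\upepsilon})$ with $C'$ large, then \eqref{E:IFACTCRUCIALPOINTWISE} bounds $\Ifact$ below on $[0, T_{(Lifespan)}) \times \mathbb{R}^3$ by a fixed positive constant, so by $\partial_t \Phi = \Ifact^{-1} \Psi_0$, the $L^\infty$ bounds \eqref{E:PSI0ITSELFIMPROVED}--\eqref{E:IFACTDERIVATIVESLIFTYIMPROVED}, and $\partial_t \Phi \geq -3/8$ from \eqref{E:PSI0NEGATIVEIMPMLIESINHYPERBOLICREGIME}, neither breakdown scenario of Prop.~\ref{P:LOCALWELLPOSEDNESSANDCONTINUATIONCRITERIA} can occur and the solution extends, contradicting $T_{(Lifespan)} < \infty$; this yields \eqref{E:MAINTHMLIFESPANESTIMATE}. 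Finally, \eqref{E:IFACTSTARCRUCIALPOINTWISE} gives $\Ifact_{\star}(t_1) \leq 1/4$ for some $t_1 < T_{(Lifespan)}$, and wherever $\Ifact_{\star}(t) \leq 1/4$ a Danskin-type envelope argument together with \eqref{E:SMALLINTFACTIMPMLIESPSI0ISLARGE} and $\partial_t \Ifact = - \Psi_0$ shows $\tfrac{d}{dt} \Ifact_{\star}(t) \leq -\tfrac{1}{4} \mathring{A}_* < 0$; hence $\Ifact_{\star}$ is monotone decreasing on $[t_1, T_{(Lifespan)})$, so it has a limit $\ell \geq 0$, and $\ell = 0$ because $\ell > 0$ would again bound $\Ifact$ below and force extension, proving \eqref{E:IFACTMINVANISHESATLIFESPAN}.

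For the regularity statements, on $[0, T_{(Lifespan)})$ the energy and $L^\infty$ bounds \eqref{E:MAINAPRIORIENERGYESTIMATES}--\eqref{E:IFACTDERIVATIVESLIFTYIMPROVED}, \eqref{E:PSI0WELLAPPROXIMATED}--\eqref{E:PSI0BIGGERTHANMINUSONEHALF}, and \eqref{E:IFACTCRUCIALPOINTWISE}--\eqref{E:SMALLINTFACTIMPMLIESRATIOISLARGE} hold by the above, and the continuity-in-time and $H^N$ statements \eqref{E:MAINTHEOREMPSI0REGULARITY}--\eqref{E:MAINTHEOREMIFACTREGULARITY} follow from \eqref{E:PSILWPREGULARITY}, from the energy estimates (which, using that $\Psi_\alpha$ and $\Ifact - 1$ are compactly supported in space, cf.\ Remark~\ref{R:BOUNDEDWAVESPEED}, upgrade to uniform-in-time Sobolev bounds $\Psi_0 \in H^5$, $\Psi_i \in H^4$, $\Ifact - 1 \in H^5$), and Sobolev embedding. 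The continuous extension to $[0, T_{(Lifespan)}] \times \mathbb{R}^3$ is then a compactness argument: by \eqref{E:PARTIALTPSI0H4ESTIMATE}--\eqref{E:PARTIALTPSIIH4ESTIMATE} and $\partial_t \Ifact = - \Psi_0 \in L^\infty_t H^5$, the maps $t \mapsto \Psi_\alpha(t,\cdot)$ and $t \mapsto \Ifact(t,\cdot) - 1$ are uniformly Lipschitz with values in $H^4$ and in $H^5$ respectively, hence extend continuously to $t = T_{(Lifespan)}$ in those spaces (and $\Psi_0$ in $H^N$ for every $N < 5$ by interpolation with the uniform $H^5$ bound), while the uniform top-order bounds pass to the endpoint by weak-$*$ compactness; this gives \eqref{E:EXTENDEDMAINTHEOREMPSI0REGULARITY}--\eqref{E:EXTENDEDMAINTHEOREMIFACTREGULARITY}.

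It remains to describe the singularity. The implication \eqref{E:MAINTHMSMALLINTFACTIMPMLIESPSI0ISLARGE} follows by rerunning the proof of \eqref{E:SMALLINTFACTIMPMLIESPSI0ISLARGE} with the threshold $1/2$ in place of $1/4$ --- using $\Ifact(t,\underline{x}) = 1 - t \Psi_0(t,\underline{x}) + \mathcal{O}(\mathring{\upepsilon})$, the lifespan bound $t < \mathring{A}_*^{-1}(1 + \mathcal{O}(\mathring{\upepsilon}))$, and smallness of $\mathring{\upepsilon}$ --- to get $\Psi_0 \geq \tfrac{1}{4} \mathring{A}_*$ there, whence $\partial_t \Phi = \Ifact^{-1} \Psi_0 \geq \tfrac{1}{4} \mathring{A}_* \Ifact^{-1}$. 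If $(T_{(Lifespan)}, \underline{x}) \in \Sigma_{T_{(Lifespan)}}^{Blowup}$, then $\Ifact(t,\underline{x}) \downarrow 0$ as $t \uparrow T_{(Lifespan)}$ by \eqref{E:IFACTPOSITIVEDURINGCLASSICALLIFESPAN} and the continuous extension, so eventually $\Ifact(t,\underline{x}) \leq 1/2$ and \eqref{E:MAINTHMSMALLINTFACTIMPMLIESPSI0ISLARGE} gives $\partial_t \Phi(t,\underline{x}) \geq \tfrac{1}{4} \mathring{A}_* \Ifact(t,\underline{x})^{-1} \to \infty$, which is \eqref{E:BLOWUPBEHAVIOR}. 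If instead $\Ifact(T_{(Lifespan)}, \underline{x}) > 0$, then \eqref{E:IFACTCRUCIALPOINTWISE} and \eqref{E:MAINTHMLIFESPANESTIMATE} force $\mathring{\Psi}_0$ to be bounded away from $\mathring{A}_*$ near $\underline{x}$, so $\Ifact \geq c > 0$ on $[0, T_{(Lifespan)}] \times B_{\underline{x}}$ for a small ball $B_{\underline{x}}$; there $\partial_t \Phi = \Ifact^{-1} \Psi_0$ and $\nabla^{\leq 1} \partial_t \Phi$ stay bounded and $\partial_t \Phi$ stays in a compact subset of $\mathcal{H}$, so a localized version of the continuation argument of Prop.~\ref{P:LOCALWELLPOSEDNESSANDCONTINUATIONCRITERIA} (finite speed of propagation restricted to the domain of dependence of a slightly smaller ball, plus propagation of $H^5$ regularity) gives $\partial_{\alpha} \Phi \in C([0, T_{(Lifespan)}], H^5(B_{\underline{x}}))$. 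Beyond invoking the already-established a priori estimates, I expect the main obstacles to be (i) threading the continuity argument so that the bootstrap interval, $T_{(Lifespan)}$, and $2\mathring{A}_*^{-1}$ are related as claimed --- in particular the observation that $\Ifact_{\star}$ becomes monotone once small, which is what makes \eqref{E:IFACTMINVANISHESATLIFESPAN} rigorous --- and (ii) the passage to the closed interval $[0, T_{(Lifespan)}]$ and the local regularity off $\Sigma_{T_{(Lifespan)}}^{Blowup}$, which call for localization/compactness rather than the global bootstrap machinery.
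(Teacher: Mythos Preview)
Your proposal is correct and follows essentially the same approach as the paper: close the bootstrap via Prop.~\ref{P:APRIORIESTIMATES} and the pointwise lemmas, use \eqref{E:IFACTSTARCRUCIALPOINTWISE} to force $T_{(Lifespan)} < 2\mathring{A}_*^{-1}$ and to pin down $T_{(Lifespan)} = \mathring{A}_*^{-1}(1+\mathcal{O}(\mathring{\upepsilon}))$, then read off regularity and the blowup description from the uniform estimates and continuous extension. The only tactical differences are cosmetic: you run an open--closed--nonempty argument on a set $\mathcal{T}$ where the paper instead defines a maximal bootstrap time $T_{(Max)}$ and argues by contradiction; and for \eqref{E:IFACTMINVANISHESATLIFESPAN} you use a Danskin-type monotonicity of $\Ifact_{\star}$ once it drops below $1/4$, whereas the paper first establishes the continuous extension \eqref{E:EXTENDEDMAINTHEOREMIFACTREGULARITY} and then reads off $\Ifact_{\star}(T_{(Lifespan)})=0$ directly from continuity and the characterization \eqref{E:LIFESPANCHARACTERIZATION}. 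Both routes are valid and equally short.
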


\begin{proof}
	Let $C_* > 1$ be a constant; we will enlarge $C_*$ as needed throughout the proof.
	Let $T_{(Max)}$ be the supremum of times $0 \leq T \leq 2 \mathring{A}_*^{-1}$ such that 
	the following properties hold:
	\begin{itemize}
		\item $\lbrace \partial_{\alpha} \Phi \rbrace_{\alpha=0,1,2,3}$ is a classical solution to \eqref{E:WAVE} on $[0,T) \times \mathbb{R}^3$
			(see Remark~\ref{R:NOPHIINWAVEEQUATION})
			verifying the properties stated in Prop.~\ref{P:LOCALWELLPOSEDNESSANDCONTINUATIONCRITERIA} 
			(with $N=5$ in the proposition).
		\item $\Ifact$ is a classical solution to \eqref{E:INTEGRATINFACTORODEANDIC} on $[0,T) \times \mathbb{R}^3$
			verifying the properties stated in Prop.~\ref{P:LOCALWELLPOSEDNESSANDCONTINUATIONCRITERIA}.
		\item $\lbrace \Psi_{\alpha} \rbrace_{\alpha = 0,1,2,3}$ are classical solutions to 
			\eqref{E:PARTALTPSI0EVOLUTION}-\eqref{E:PARTALTPSIIEVOLUTION}
			for $(t,\underline{x}) \in [0,T) \times \mathbb{R}^3$
			such that the properties stated in Prop.~\ref{P:LOCALWELLPOSEDNESSANDCONTINUATIONCRITERIA}
			hold.
		\item The bootstrap assumptions \eqref{E:BOOTSTRAPRATIO} and \eqref{E:BOOTSTRAPSMALLINTFACTIMPMLIESPSI0ISLARGE} 
			hold for $(t,\underline{x}) \in [0,T) \times \mathbb{R}^3$.
		\item The $L^{\infty}$ bootstrap assumptions \eqref{E:PSI0ITSELFBOOTSTRAP}-\eqref{E:IFACTITSELFBOOTSTRAP} 
			hold for $t \in [0,T)$ with $\varepsilon := C_* \mathring{\upepsilon}$.
		\item $\inf \left\lbrace \Ifact_{\star}(t) \ | \ t \in [0,T) \right\rbrace > 0$,
			where $\Ifact_{\star}$ is defined in \eqref{E:IFACTMIN}.
			Note that this implies that the bootstrap assumption \eqref{E:HYPERBOLICBOOTSTRAP} holds on $[0,T)$.
	\end{itemize}
	Throughout the rest of the proof, we will assume that
	$\mathring{\upepsilon}$ is sufficiently small and that $C_*$ is sufficiently large
	without explicitly mentioning it every time.
	Next, we note that the hypotheses of
	Prop.~\ref{P:LOCALWELLPOSEDNESSANDCONTINUATIONCRITERIA} hold with $N=5$.
	Hence, by Prop.~\ref{P:LOCALWELLPOSEDNESSANDCONTINUATIONCRITERIA} and Sobolev embedding,
	we have $T_{(Max)} > 0$.
	
	We will now show that
	$T_{(Max)} = T_{(Lifespan)}$.
	Clearly $T_{(Max)} \leq T_{(Lifespan)}$ and thus it suffices to show that $T_{(Lifespan)} \leq T_{(Max)}$.
	To proceed, we assume for the sake of deriving a contradiction that
	\[ 
		\inf_{s \in [0,T_{(Max)})} \Ifact_{\star}(s) > 0.
	\]
	Then, in view of Defs.~\ref{D:INTEGRATINGFACTOR} and ~\ref{D:RENORMALIZEDSOLUTION} and the bootstrap assumptions,
	we see that this assumption implies that
	\[
		\lim_{t \uparrow T_{(Max)}} 
			\sup_{s \in [0,t)}
			\left\lbrace
			\left\|
				\partial_t \Phi
			\right\|_{L^{\infty}(\Sigma_s)}
			+
			\sum_{\alpha = 0}^3
			\left\|
				\partial_{\alpha}
				\partial_t \Phi
			\right\|_{L^{\infty}(\Sigma_s)}
			\right\rbrace
			<
			\infty.
	\]
	It follows that neither of the two breakdown scenarios of 
	Prop.~\ref{P:LOCALWELLPOSEDNESSANDCONTINUATIONCRITERIA} occur on $[0,T_{(Max)}) \times \mathbb{R}^3$.
	Moreover, by Prop.~\ref{P:APRIORIESTIMATES}, if $C_*$ is large enough, 
	then the bootstrap assumption inequalities
	\eqref{E:PSI0ITSELFBOOTSTRAP}-\eqref{E:IFACTITSELFBOOTSTRAP} 
	hold in a strict sense (that is, with ``$\leq$'' replaced by ``$<$'') on $[0,T_{(Max)}) \times \mathbb{R}^3$.
	Moreover, all estimates proved prior to Prop.~\ref{P:APRIORIESTIMATES} hold with 
	$\varepsilon$ replaced by $C \mathring{\upepsilon}$, and we will use this fact in the rest of the proof
	without mentioning it again.
	Furthermore,
	\eqref{E:PSI0NEGATIVEIMPMLIESINHYPERBOLICREGIME}
	and
	\eqref{E:SMALLINTFACTIMPMLIESPSI0ISLARGE}
	respectively yield strict improvements of the bootstrap assumptions
	\eqref{E:BOOTSTRAPRATIO} and \eqref{E:BOOTSTRAPSMALLINTFACTIMPMLIESPSI0ISLARGE}	
	for $(t,\underline{x}) \in [0,T_{(Max)}) \times \mathbb{R}^3$.
	Next, we note that the estimate \eqref{E:IFACTSTARCRUCIALPOINTWISE}
	implies that $\Ifact_{\star}(t)$ cannot remain positive for $t$ larger than
	$
	\mathring{A}_*^{-1} 
				\left\lbrace
					1
					+ 
					\mathcal{O}(\mathring{\upepsilon})
		\right\rbrace
	$.
	From this fact, it follows that $T_{(Max)} < 2 \mathring{A}_*^{-1}$.
	Combining these facts and appealing to Prop.~\ref{P:LOCALWELLPOSEDNESSANDCONTINUATIONCRITERIA},
	we deduce that
	$\lbrace \partial_{\alpha} \Phi \rbrace_{\alpha=0,1,2,3}$, 
	$\lbrace \Psi_{\alpha} \rbrace_{\alpha=0,1,2,3}$, 
	and $\Ifact$ extend as classical solutions to 
	a region of the form $[0,T_{(Max)} + \Delta) \times \mathbb{R}^3$ for some $\Delta > 0$ with $T_{(Max)} + \Delta < 2 \mathring{A}_*^{-1}$
	on which the solution has the same Sobolev regularity as the data,
	such that  
	$\inf_{s \in [0,T_{(Max)} + \Delta]} \Ifact_{\star}(s) > 0$,
	and such that the bootstrap assumptions
	\eqref{E:BOOTSTRAPRATIO}-\eqref{E:IFACTITSELFBOOTSTRAP}
	hold for $(t,\underline{x}) \in [0,T_{(Max)} + \Delta] \times \mathbb{R}^3$.
	In total, this contradicts the definition of $T_{(Max)}$.
	We therefore conclude that
	\begin{align} \label{E:IFACTVANISHESONTMAX}
		T_{(Max)} 
		= \sup
			\left\lbrace 
				t > 0 \ | \ \inf_{s \in [0,t)} \Ifact_{\star}(s) > 0 
			\right\rbrace
	\end{align}
	and that the estimates \eqref{E:MAINAPRIORIENERGYESTIMATES}-\eqref{E:IFACTAPRIORIENERGYESTIMATE}
	and
	\eqref{E:PSI0ITSELFIMPROVED}-\eqref{E:IFACTDERIVATIVESLIFTYIMPROVED} 
	hold for $t \in [0,T_{(Max)})$.
	
	Next, we note that the estimate
	\eqref{E:MAINTHMSMALLINTFACTIMPMLIESPSI0ISLARGE} follows from \eqref{E:SMALLINTFACTIMPMLIESPSI0ISLARGE}.
	In particular, it follows from
	\eqref{E:MAINTHMSMALLINTFACTIMPMLIESPSI0ISLARGE} and \eqref{E:IFACTVANISHESONTMAX} and that
	$
	\displaystyle
			\lim_{t \uparrow T_{(Max)}} \sup_{s \in [0,t)}
			\left\|
				\partial_t \Phi
			\right\|_{L^{\infty}(\Sigma_s)}
			=
			\infty
	$,
	that is, that $\partial_t \Phi$ blows up at time $T_{(Max)}$.
	We have therefore shown that 
	$T_{(Max)} = T_{(Lifespan)}$ 
	and that $T_{(Lifespan)}$ is characterized by \eqref{E:LIFESPANCHARACTERIZATION}.
	Moreover, the arguments given in the previous paragraph imply that
	$\Ifact_{\star}$ vanishes for the first time at
	$	
	T_{(Lifespan)}
	=
	\mathring{A}_*^{-1} 
				\left\lbrace
					1
					+ 
					\mathcal{O}(\mathring{\upepsilon})
		\right\rbrace$,
	which in total yields 
	\eqref{E:IFACTPOSITIVEDURINGCLASSICALLIFESPAN}
	and
	\eqref{E:MAINTHMLIFESPANESTIMATE}.
	
	In the rest of this proof,
	we sometimes silently use that
	$
	\Psi_0 \in L^{\infty}\left([0,T_{(Lifespan)}),L^2(\mathbb{R}^3)\right)
	$
	and
	$
	\Ifact - 1 \in L^{\infty}\left([0,T_{(Lifespan)}),L^2(\mathbb{R}^3)\right)
	$.
	These facts do not follow from 
	the energy estimates \eqref{E:MAINAPRIORIENERGYESTIMATES} and \eqref{E:IFACTAPRIORIENERGYESTIMATE},
	but instead follow from
	\eqref{E:PSI0ITSELFIMPROVED}, \eqref{E:IFACTITSELFIMPROVED}, 
	and the compactly supported (in space) nature of $\Psi_0$ and $\Ifact - 1$.
	Next, we easily conclude from the definition \eqref{E:ENERGYTOCONTROLSOLNS}
	of $\mathbb{Q}_{(\mathring{\upepsilon})}(t)$
	and the fact that the estimate \eqref{E:MAINAPRIORIENERGYESTIMATES}
	holds on $[0,T_{(Lifespan)})$ 
	that
	$
		\Psi_0 \in L^{\infty}\left([0,T_{(Lifespan)}), H^5(\mathbb{R}^3) \right)
	$
	(as is stated in \eqref{E:MAINTHEOREMPSI0REGULARITY})
	and that
	$
	\Psi_i  
	\in 
	L^{\infty} \left([0,T_{(Lifespan)}), H^4(\mathbb{R}^3) \right)
	$
	(as is stated in \eqref{E:MAINTHEOREMPSIIREGULARITY}).
	The same reasoning yields that
	$
		\Psi_0 \in L^{\infty}\left([0,T_{(Lifespan)}], H^5(\mathbb{R}^3) \right)
	$
	(as is stated in \eqref{E:EXTENDEDMAINTHEOREMPSI0REGULARITY}),
	where the open time interval is replaced with $[0,T_{(Lifespan)}]$.
	The fact that
	$
		\Psi_{\alpha} \in C\left([0,T_{(Lifespan)}), H^5(\mathbb{R}^3) \right)
	$
	(as is stated in \eqref{E:MAINTHEOREMPSI0REGULARITY})
	is a standard result that can be proved using 
	energy estimate arguments (similar to the ones we used to prove \eqref{E:MAINAPRIORIENERGYESTIMATES}) 
	and standard facts from functional analysis.
	We omit the details and instead refer the reader to 
	\cite{jS2008c}*{Section 2.7.5}.
	We clarify that in proving this ``soft result,''
	it is important that for fixed $t \in [0,T_{(Lifespan)})$,
	we have
	$\min_{[0,t] \times \mathbb{R}^3} \Ifact > 0$,
	which implies in particular that the weight $\weight(\Ifact^{-1} \Psi_0)$
	on the right-hand side of \eqref{E:ENERGYTOCONTROLSOLNS}
	is bounded from above and from below away from $0$ on $[0,t] \times \mathbb{R}^3$
	(and thus the energy estimates are non-degenerate away from $\Sigma_{T_{(Lifespan)}}$).
	Through similar reasoning based on 
	equation \eqref{E:INTEGRATINFACTORODEANDIC} (which states that $\partial_t \Ifact = - \Psi_0$),
	the identity \eqref{E:FORMULAFORDERIVATIVESOFINTEGRATINGFACTOR},
	and the estimate \eqref{E:IFACTAPRIORIENERGYESTIMATE},
	we deduce that
	$
	\mathcal{I} - 1  
	\in 
	L^{\infty} \left([0,T_{(Lifespan)}), H^5(\mathbb{R}^3) \right)
	\cap
	C\left([0,T_{(Lifespan)}), H^6(\mathbb{R}^3) \right)
	$.
	We have therefore proved \eqref{E:MAINTHEOREMPSI0REGULARITY}-\eqref{E:MAINTHEOREMIFACTREGULARITY}.
	
	We will now prove \eqref{E:EXTENDEDMAINTHEOREMPSI0REGULARITY}-\eqref{E:EXTENDEDMAINTHEOREMIFACTREGULARITY}.
	We first note that the estimates 
	\eqref{E:MAINAPRIORIENERGYESTIMATES} and \eqref{E:PARTIALTPSI0H4ESTIMATE}-\eqref{E:PARTIALTPSIIH4ESTIMATE}
	and equation \eqref{E:INTEGRATINFACTORODEANDIC}
	imply that
	$
	\partial_t
		\Psi_{\alpha}
		\in 
		L^{\infty}\left([0,T_{(Lifespan)}), H^4(\mathbb{R}^3) \right)
	$
	and 
	$
	\partial_t
		\Ifact
		\in 
	  L^{\infty}\left([0,T_{(Lifespan)}), H^5(\mathbb{R}^3) \right)
	$.
	Hence, from the fundamental theorem of calculus, 
	the initial conditions \eqref{E:INTEGRATINFACTORODEANDIC} and \eqref{E:DATASIZE}-\eqref{E:LARGEDATASIZE},
	and the completeness
	of the Sobolev spaces $H^M(\mathbb{R}^3)$, we obtain
	$
		\Psi_{\alpha}
		\in 
			C\left([0,T_{(Lifespan)}],H^4(\mathbb{R}^3) \right)
	$
	and
	$
		\Ifact - 1
		\in 
			C\left([0,T_{(Lifespan)}],H^5(\mathbb{R}^3) \right)
	$.
	In particular, we have shown
	\eqref{E:EXTENDEDMAINTHEOREMPSIIREGULARITY}-\eqref{E:EXTENDEDMAINTHEOREMIFACTREGULARITY}.
	Moreover, \eqref{E:EXTENDEDMAINTHEOREMIFACTREGULARITY} and Sobolev embedding together yield that
	$\Ifact \in C\left([0,T_{(Lifespan)}],C(\mathbb{R}^3) \right)$
	and thus $\Ifact_{\star} \in C\left([0,T_{(Lifespan)}],C(\mathbb{R}^3) \right)$.
	Since we have already shown that $T_{(Lifespan)} = \mbox{RHS~\eqref{E:IFACTVANISHESONTMAX}}$,
	it follows that $\Ifact_{\star}(T_{(Lifespan)}) = 0$ and that
	$\lim_{t \uparrow T_{(Lifespan)}} \Ifact_{\star}(t) = 0$,
	that is, that \eqref{E:IFACTMINVANISHESATLIFESPAN} holds.
	To obtain that for $N < 5$, we have
	$\Psi_0
		\in 
			C\left([0,T_{(Lifespan)}],H^N(\mathbb{R}^3) \right)
	$
	(as is stated in \eqref{E:EXTENDEDMAINTHEOREMPSI0REGULARITY}),
	we interpolate between\footnote{Here, we mean the following standard inequality: 
	if $f \in H^5(\Sigma_t)$ and $0 \leq N \leq 5$, 
	then there exists a constant $C_N > 0$ such that
	$\| f \|_{H^N(\Sigma_t)} \leq C_N \| f \|_{L^2(\Sigma_t)}^{1-N/5} \| f \|_{H^5(\Sigma_t)}^{N/5}$. \label{FN:INTERPOLATION}}
	$L^2$ and $H^5$
	and
	use the already shown facts
	$
		\Psi_0 \in L^{\infty}\left([0,T_{(Lifespan)}], H^5(\mathbb{R}^3) \right)
		\cap
		C\left([0,T_{(Lifespan)}],H^4(\mathbb{R}^3) \right)
	$.
	We have therefore proved \eqref{E:EXTENDEDMAINTHEOREMPSI0REGULARITY}.
	
	The desired localized blowup result \eqref{E:BLOWUPBEHAVIOR}
	for points in $\Sigma_{T_{(Lifespan)}}^{Blowup}$
	(where $\Sigma_{T_{(Lifespan)}}^{Blowup}$ is defined in \eqref{E:BLOWUPSET})
	now follows from
	\eqref{E:MAINTHMSMALLINTFACTIMPMLIESPSI0ISLARGE}
	and the continuous extension property $\Ifact \in C\left([0,T_{(Lifespan)}],C(\mathbb{R}^3) \right)$
	mentioned in the previous paragraph.
	
	 Finally, we will show that 
	if $(T_{(Lifespan)}, \underline{x}) \notin \Sigma_{T_{(Lifespan)}}^{Blowup}$,
	then there exists an open ball $B_{\underline{x}} \subset \mathbb{R}^3$ centered at $\underline{x}$
	such that for $\alpha = 0,1,2,3$, we have
	$\partial_{\alpha} \Phi \in C\left([0,T_{(Lifespan)}], H^5(B_{\underline{x}}) \right)$.
	To proceed, we first note that if $(T_{(Lifespan)}, \underline{x}) \notin \Sigma_{T_{(Lifespan)}}^{Blowup}$, 
	then the results proved above imply that there
	exist a $\delta > 0$ and a radius $r_{\underline{x}} > 0$
	such that,
	with $B_{\underline{x};r_{\underline{x}}} \subset \mathbb{R}^3$ denoting 
	the open ball of radius $r_{\underline{x}}$ centered at $\underline{x}$,
	we have $\Ifact(t,\underline{y}) > 0$ for 
	$(t,\underline{y}) \in [T_{(Lifespan)} - \delta, T_{(Lifespan)}] \times \bar{B}_{\underline{x};r_{\underline{x}}}$
	(where $\bar{B}_{\underline{x};r_{\underline{x}}}$ denotes the closure of $B_{\underline{x};r_{\underline{x}}}$)
	and such that for $t \in [T_{(Lifespan)} - \delta, T_{(Lifespan)})$,
	we have
	$\| \Psi_{\alpha} \|_{H^5(\lbrace t \rbrace \times B_{\underline{x};r_{\underline{x}}})} < \infty$
	and
	$\| \Ifact - 1 \|_{H^6(\lbrace t \rbrace \times B_{\underline{x};r_{\underline{x}}})} < \infty$.
	Hence, since the wave speed of the system is uniformly bounded on
	$[T_{(Lifespan)} - \delta, T_{(Lifespan)}] \times \bar{B}_{\underline{x};r_{\underline{x}}}$
	(see Remark~\ref{R:BOUNDEDWAVESPEED}),
	since 
	$\Ifact$ is uniformly bounded from above and from below \emph{strictly away from $0$}
	on $[T_{(Lifespan)} - \delta, T_{(Lifespan)}] \times \bar{B}_{\underline{x};r_{\underline{x}}}$,
	and since the estimates 
	\eqref{E:MAINTHEOREMPSI0REGULARITY}-\eqref{E:MAINTHEOREMIFACTREGULARITY}
	\eqref{E:EXTENDEDMAINTHEOREMPSI0REGULARITY}-\eqref{E:EXTENDEDMAINTHEOREMIFACTREGULARITY} hold,
	we can derive Sobolev estimates 
	(based on energy arguments)
	similar to the ones that we derived three paragraphs above,
	but localized in space,\footnote{For example, for $\upsigma > 0$ chosen sufficiently large,
	for $t$ near $T_{(Lifespan)}$,
	and for $s \in [t,T_{(Lifespan)}]$,
	one can consider the state of the solution on
	$\lbrace t \rbrace \times B_{\underline{x};r_{\underline{x}}}$
	as an initial condition and use energy identities to obtain 
	Sobolev estimates on
	$\lbrace s \rbrace \times B_{\underline{x};r_{\underline{x}} - \upsigma s} \subset \Sigma_s$.}
	for equations 
	\eqref{E:INTEGRATINFACTORODEANDIC}
	and
	\eqref{E:PARTALTPSI0EVOLUTION}-\eqref{E:PARTALTPSIIEVOLUTION},
	starting from initial conditions on
	$\lbrace t \rbrace \times B_{\underline{x};r_{\underline{x}}}$
	for some $t$ sufficiently close 
	(in a manner that depends on $\underline{x}$)
	to $T_{(Lifespan)}$.
	This yields the existence of an open ball 
	$B_{\underline{x}} \subset B_{\underline{x};r_{\underline{x}}}$
	centered at $\underline{x}$
	such that the following regularity properties hold:
	$\Psi_{\alpha} \in C\left([0,T_{(Lifespan)}], H^5(B_{\underline{x}}) \right)$
	and
	$\Ifact - 1 \in C\left([0,T_{(Lifespan)}], H^6(B_{\underline{x}}) \right)$.
	We clarify that to derive the localized energy estimates on the closed time interval $[0,T_{(Lifespan)}]$,
	it is crucially important that the bounds noted above imply
	that the spatial derivative weight $\weight(\Ifact^{-1} \Psi_0)$
	(which appears, for example, on the right-hand side of \eqref{E:ENERGYTOCONTROLSOLNS})
	is strictly positive on the domain
	$[T_{(Lifespan)} - \delta, T_{(Lifespan)}] \times \bar{B}_{\underline{x}}$.
	From the above regularity properties of $\lbrace \Psi_{\alpha} \rbrace_{\alpha = 0,1,2,3}$ and $\Ifact$,
	the positivity of $\Ifact$ on $[T_{(Lifespan)} - \delta, T_{(Lifespan)}] \times \bar{B}_{\underline{x}}$,
	and the standard Sobolev--Moser calculus,
	we conclude,
	in view of Def.~\ref{D:RENORMALIZEDSOLUTION},
	the desired result
	$\partial_{\alpha} \Phi \in C\left([0,T_{(Lifespan)}], H^5(B_{\underline{x}}) \right)$.
	We have therefore proved the theorem.
\end{proof}

\bibliographystyle{amsalpha}
\bibliography{JBib}

\end{document}